\newtheorem{thm}{Theorem}[section]
\newtheorem{prop}[thm]{Proposition}
\newtheorem{conj}[thm]{Conjecture}
\newtheorem{cor}[thm]{Corollary}
\newtheorem{lem}[thm]{Lemma}
\theoremstyle{definition}
\newtheorem{rem}[thm]{Remark}
\newtheorem{def1}[thm]{Definition}
\newcommand{\ra}{\rightarrow}
\newcommand{\bk}{\backslash}
\newcommand{\mc}{\mathcal}
\newcommand{\mb}{\mathbb}
\newcommand{\rad}{\text{rad}}
\newcommand{\lla}{\left\langle}
\newcommand{\e}{\epsilon}
\newcommand{\rra}{\right\rangle}
\newcommand{\mbf}{\boldsymbol}
\renewcommand{\bar}{\overline}
\begin{document}
\title[Effective Asymptotics for Multilinear Averages]{Effective Asymptotic Formulae for Multilinear Averages of Multiplicative Functions}
\author
{Oleksiy Klurman}
\address{D\'{e}partement de Math\'{e}matiques et de Statistique\\ Universit\'{e} de Montr\'{e}al\\
Montr\'{e}al, Qu\'{e}bec, Canada}
\email{lklurman@gmail.com}
\author{Alexander P. Mangerel}
\address{Department of Mathematics\\ University of Toronto\\
Toronto, Ontario, Canada}
\email{sacha.mangerel@mail.utoronto.ca}
\maketitle
\begin{abstract}
Let $f_1,\ldots,f_k : \mb{N} \ra \mb{C}$ be multiplicative functions taking values in the closed unit disc. Using an analytic approach in the spirit of Hal\'{a}sz' mean value theorem, we compute multidimensional averages $$x^{-l} \sum_{\mbf{n} \in [x]^l} \prod_{1 \leq j \leq k} f_j(L_j(\mbf{n}))$$
as $x \ra \infty$, where $[x] := [1,x]$ and $L_1,\ldots, L_k$ are affine linear forms that satisfy some natural conditions. Our approach gives a new proof of a result of Frantzikinakis and Host that is distinct from theirs, with \emph{explicit} main and error terms. \\
As an application of our formulae, we establish a \emph{local-to-global} principle for Gowers norms of multiplicative functions. We also compute the asymptotic densities of the sets of integers $n$ such that a given multiplicative function $f: \mb{N} \ra \{-1, 1\}$ yields a fixed sign pattern of length 3 or 4 on almost all 3- and 4-term arithmetic progressions, respectively, with first term $n$.
\end{abstract}
\section{Introduction}
\subsection{Main Theorems}
For $k,l \geq 2$, let $\mbf{L} := (\mbf{L}_1,\ldots,\mbf{L}_k)$ be a vector of $k$ (affine) linear forms $L_j : \mb{R}^l \ra \mb{R}$ with non-negative integer coefficients, i.e.,
\begin{equation*}
L_j(\mbf{n}) = \alpha_{j,0} + \sum_{1 \leq r \leq l} \alpha_{j,r} n_r,
\end{equation*}
where $(\alpha_{j,r})_{0 \leq r \leq l} \in \mb{N}_0^{l+1}$. We will call such a vector an \emph{integral system}. Assume moreover that $(\alpha_{j,1},\ldots,\alpha_{j,l}) = 1$, for each $j$, and that the forms are pairwise linearly independent. We will say that a system of forms that satisfies these properties is \emph{primitive}. We will concern ourselves throughout this paper with \emph{primitive integral systems} of affine linear forms. We remark that this primitivity assumption is merely technical and can be removed with more effort.\\
Let $\mb{U}$ denote the closed unit disc. We say that a function $f : \mb{N} \ra \mb{C}$ is \emph{1-bounded} if $f(n) \in \mb{U}$ for all $n$. For a vector $\mbf{f} := (f_1,\ldots,f_k)$ of 1-bounded multiplicative functions, a vector $\mbf{x} := (x_1,\ldots,x_l) \in (0,\infty)^l$ and a system of primitive integral affine linear forms $\mbf{L}$, put
\begin{equation*}
M(\mbf{x}; \mbf{f},\mbf{L}) := \langle \mbf{x}\rangle^{-1}  \sum_{\mbf{n} \in \mc{B}(\mbf{x})} \prod_{1 \leq j \leq k} f_j(L_j(\mbf{n})),
\end{equation*}
where $\mc{B}(\mbf{x})$ denotes the box $\prod_{1 \leq j \leq k} (0,x_j]$, and $\langle \mbf{x}\rangle = x_1 \cdots x_l$ is its volume.
When $\mbf{x} = (x,\ldots,x)$ for some $x \geq 1$ then we will write $M(x;\mbf{f},\mbf{L})$ instead. \\
The main purpose of the present paper is to establish an asymptotic formula for $M(\mbf{x};\mbf{f},\mbf{L})$ with \emph{explicit} main and error terms using analytic techniques in the spirit of Hal\'{a}sz' mean value theorem. In contrast, results in this direction have thus far been obtained by either using ergodic theoretic machinery, as in the works of Frantzikinakis and Host \cite{HF},\cite{HF2}, or, more recently, by using the nilpotent Hardy-Littlewood method of Green and Tao (see the recent paper of Matthiesen~\cite{Matth} for details). Neither of these papers give quantitative error terms.\\
For multiplicative functions $f,g : \mb{N} \ra \mb{U}$, we set
\begin{equation*}
\mb{D}(f,g;y,x) := \left(\sum_{y < p \leq x} \frac{1-\text{Re}(f(p)\bar{g(p)})}{p}\right)^{\frac{1}{2}}
\end{equation*}
for $1 \leq y \leq x$, as well as $\mb{D}(f,g;x) := \mb{D}(f,g;1,x)$. We then define $\mb{D}(f,g;\infty) := \lim_{x \ra \infty} \mb{D}(f,g;x)$. We also put
\begin{equation*}
\mb{D}^{\ast}(f,g;y,x) := \left(\sum_{y < p^k \leq x} \frac{1-\text{Re}(f(p^k)\bar{g(p^k)})}{p^k}\right)^{\frac{1}{2}}.
\end{equation*}
For $Q,X \geq 1$, we shall write
\begin{equation*}
\mc{D}(g;X,Q) := \inf_{|t| \leq X; q \leq Q, \chi (q)} \mb{D}(f,\chi n^{it};X)^2,
\end{equation*}
where the infimum in $q$ is over all Dirichlet characters $\chi$ modulo $q$, for all $q \leq Q$. \\
Recently, using their deep structural theorem for multiplicative functions (see Theorem 2.1 in \cite{HF2}), Frantzikinakis and Host proved that for a vector of 1-bounded \emph{multiplicative} functions $\mbf{f}$ and a system of integral, affine linear forms,
\begin{equation}  \label{HF}
M(x;\mbf{f},\mbf{L})=cx^{iT}e(\omega(x))+o_{x\to\infty}(1),
\end{equation}
where $\omega:\mathbb{R}\to\mathbb{R}$ is a slowly-oscillating function and $c=0$ unless all of the functions $f_j$ are \emph{pretentious} in the sense that for each $1 \leq j \leq k$ there is a primitive Dirichlet character $\chi_j$ with modulus $q_j$, and $t_j \in \mb{R}$ such that $\mb{D}(f_j,\chi_jn^{it_j};\infty) < \infty$. In the latter case, they show that the parameter $T$ in \eqref{HF} depends in some way on $t_1,\ldots,t_k$ (for instance, when the system is primitive they prove that $T = t_1 + \cdots + t_k$). However, they do not give an explicit expression for $c$.\\
Our first result is a quantitative version of \eqref{HF}, with explicit main and error terms, in the case that \emph{all} of the functions $f_j$ are pretentious in the above sense. To state it, we need to introduce some notation and conventions.\\
Given a vector $\mbf{x} \in (0,\infty)^l$ we write $$\ell(\mbf{x}) := \sum_{1 \leq j \leq l} |x_j|.$$  We will also write $x_-$ and $x_+$ to denote, respectively, the minimum and maximum components of $\mbf{x}$. Given $A \geq 1$ and $B > 0$, we will say that a vector $\mbf{x} \in (0,\infty)^l$ is \emph{$(A,B)$-appropriate} if $x_- \geq 3$ and
$$
x_- > l\log_2((l+1)Ax_+)^2 (\log x_+)^B.
$$
This condition ensures that $\mbf{x}$ is not too skew. \\
%
For a system of linear forms $\mbf{L}$, we write $\mbf{L}(\mbf{0})$ to be the vector with components $L_j(\mbf{0})$, for $1 \leq j \leq k$. We also say that the \emph{height} of the system $\mbf{L}$ of affine linear forms is the maximum of the coefficients of all linear forms in $\mbf{L}$.\\
For any multiplicative function $f:\mathbb{N}\to\mathbb{U}$ and any prime $p$, we define the multiplicative function $f_p$ by
\begin{align}\label{localfactor}
 f_{p}(q^{\nu}) := \begin{cases} f(p^{\nu}) & \mbox{if } q=p \\ 1 & \mbox{if } q\ne p,\end{cases}.
 \end{align}
We then define the $p$-adic local average of $\mbf{f}$ on $\mbf{L}$ by
\begin{align*}
M_p(\mbf{f},\mbf{L}) := \lim_{x \ra \infty} x^{-l} \sum_{\mbf{n} \in [x]^l} \prod_{1 \leq j \leq k} f_{j,p}(L_j(\mbf{n})).
\end{align*}
For an integral vector $\mbf{a}=(a_1,\dots,a_k)$ and primitive Dirichlet characters $\chi_1,\ldots,\chi_k$ to respective moduli $q_1,\ldots,q_k,$ we set
\begin{align*}
\mc{I}(\mbf{x},\mbf{L},\mbf{t}) &:= \int_{[0,1]^l} \prod_{1 \leq j \leq k} L_j((u_1x_1,\ldots,u_lx_l))^{it_j}d\mbf{u}; \\
\Xi_{\mbf{a}}(\mbf{\chi},\mbf{L}) &:= \mathop{\sum_{b_1 (q_1)} \cdots \sum_{b_k (q_j)}}_{\exists \mbf{n} : L_j(\mbf{n})/a_j \equiv b_j (q_j) \forall j} \prod_{1 \leq j \leq k} \chi_j(b_j); \\
R(m_1,\cdots,m_k) &:= \lim_{x \ra \infty} x^{-l}\sum_{\mbf{n} \in [x]^l \atop m_j|L_j(\mbf{n}) \forall j} 1,
\end{align*} 
and 
$$C_{\mbf{a}}(\mbf{x},\mbf{\chi},\mbf{t},\mbf{L}):= R(q_1a_1,\ldots, q_ka_k)\Xi_{\mbf{a}}(\mbf{\chi},\mbf{L})\mc{I}(\mbf{x},\mbf{L},\mbf{t}).$$
Finally, we recall that the \emph{radical} of a positive integer $n$ is $\rad(n) := \prod_{p|n} p.$ \\
We begin by stating one corollary of our main theorem.
\begin{cor} \label{CORPRET}
Let $A, q\geq 2$, $B > 0$, and let $\mbf{x} \in (0,\infty)^{l}$ be $(A,B)$-appropriate. Let $\mbf{f} = (f_1,\ldots,f_k)$ be a vector of 1-bounded multiplicative functions. Let $\mbf{L}$ be a primitive integral system of $k$ affine linear forms in $l$ variables with height at most $A$. \\
Suppose that  there are primitive Dirichlet characters $\chi_1,\ldots,\chi_k$ modulo $q$ and $t_1,\ldots,t_k\in \mb{R}$ such that $\mathbb{D}(f_j(n),\chi_jn^{it_j})<\infty$ for all $1\le j\le k.$ Let $F_j(n) := f_j(n)\bar{\chi}_j(n)n^{-it_j}$. Put $X := \ell(\mbf{x}) + 1.$ 
Then
\begin{align*}
&M(\mbf{x};\mbf{f},\mbf{L}) =\left(\sum_{\rad(a_j)|q \atop \forall 1 \leq j \leq k} \prod_{1 \leq j \leq k} \frac{f_j(a_j)}{a_j^{it_j}}C_{\mbf{a}}(\mbf{x},\mbf{\chi},\mbf{t},\mbf{L})\right)\prod_{p \leq AX \atop p\nmid q} M_p(\mbf{F},\mbf{L})+o(1).\end{align*}
\end{cor}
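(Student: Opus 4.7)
The corollary will be derived from the paper's main asymptotic theorem for $M(\mbf{x}; \mbf{F}, \mbf{L})$ in the case $\mb{D}(F_j, 1) < \infty$ for all $j$, by factoring out the $q$-parts of each $L_j(\mbf{n})$ and reducing to restricted averages of the near-$1$ functions $\mbf{F}$. By multiplicativity, every $n \in \mb{N}$ factors uniquely as $n = am$ with $\rad(a) \mid q$ and $(m, q) = 1$, so
\begin{equation*}
f_j(n) = f_j(a) F_j(m) \chi_j(m) m^{it_j} = \frac{f_j(a)}{a^{it_j}} F_j(m) \chi_j(m) n^{it_j}.
\end{equation*}
Applying this pointwise to each $L_j(\mbf{n})$ and summing over $\mbf{n}$, I parametrize by the tuple $\mbf{a}$ of $q$-parts:
\begin{equation*}
M(\mbf{x}; \mbf{f}, \mbf{L}) = \sum_{\substack{\mbf{a}:\, \rad(a_j) \mid q \\ \forall j}} \prod_j \frac{f_j(a_j)}{a_j^{it_j}} \cdot T(\mbf{x}, \mbf{a}),
\end{equation*}
where $T(\mbf{x}, \mbf{a})$ is the average over $\mbf{n} \in \mc{B}(\mbf{x})$, restricted to those tuples for which the $q$-part of $L_j(\mbf{n})$ is exactly $a_j$, of $\prod_j F_j(L_j(\mbf{n})/a_j)\, \chi_j(L_j(\mbf{n})/a_j)\, L_j(\mbf{n})^{it_j}$.

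Next, I would expand $\prod_j \chi_j(L_j(\mbf{n})/a_j)$ by grouping $\mbf{n}$ according to the residues $b_j$ of $L_j(\mbf{n})/a_j$ modulo $q$. The sum over admissible $\mbf{b}$ (i.e.\ those realized by some $\mbf{n}$) weighted by $\prod_j \chi_j(b_j)$ is exactly $\Xi_{\mbf{a}}(\mbf{\chi}, \mbf{L})$, while the density of $\mbf{n}$ satisfying the joint divisibility $q_j a_j \mid L_j(\mbf{n})$ contributes $R(q_1 a_1, \ldots, q_k a_k)$. Within each consistent congruence class I apply the main theorem to the sub-average of $\prod_j F_j(L_j(\mbf{n})/a_j)$: since $\mb{D}(F_j, 1) < \infty$, the multiplicative structure produces the Euler product $\prod_{p \leq AX,\, p \nmid q} M_p(\mbf{F}, \mbf{L})$, and the smooth Archimedean factor $\prod_j L_j(\mbf{n})^{it_j}$ (with $|t_j|$ bounded under the pretentious hypothesis) is approximated by a Riemann integral, producing $\mc{I}(\mbf{x}, \mbf{L}, \mbf{t})$. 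Collecting these factors into $C_{\mbf{a}} = R \cdot \Xi_{\mbf{a}} \cdot \mc{I}$ gives the claimed main term.

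The main obstacle I anticipate is uniformity of the error in $\mbf{a}$. The outer sum is infinite, ranging over all $q$-smooth tuples, and the main theorem's error term—invoked on a congruence class of modulus up to $q_1 a_1 \cdots q_k a_k$—must be summable against the weight $\prod_j |f_j(a_j)| \cdot R(q_j a_j)$. The natural approach is to truncate at $\max_j a_j \leq (\log x_-)^{O_{A,B}(1)}$, bounding the tail by smooth-number and divisor estimates, and to absorb the arithmetic losses from the main theorem on each residue class by leveraging the $(A,B)$-appropriateness of $\mbf{x}$, which provides exactly the slack ($x_- \gg (\log x_+)^B$) needed to beat the modulus $q_1 a_1 \cdots q_k a_k$. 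Once this uniformity is in hand, the rearrangement to the stated formula with $o(1)$ error is routine.
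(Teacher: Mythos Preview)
Your outline is correct in spirit, but it is considerably more elaborate than what the paper actually does. In the paper, Corollary~\ref{CORPRET} is not given a separate argument at all: it is the specialization of Theorem~\ref{MultAvg} to the case $q_j=q$ for every $j$. One simply reads off the first displayed formula of Theorem~\ref{MultAvg} and lets $y\to\infty$ slowly with $X$; since $\mb{D}(f_j,\chi_jn^{it_j};\infty)<\infty$ implies $\mb{D}^{\ast}(f_j,\chi_jn^{it_j};y,AX)\to 0$, and since $(\log y)^2/\sqrt{y}$, $1/\log y$, and the $1/x_-$ term all vanish under $(A,B)$-appropriateness, every error term is $o(1)$. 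No further decomposition is needed.

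What you have written instead is essentially a sketch of the \emph{proof of Theorem~\ref{MultAvg} itself} (Section~3): the factorization by $q$-parts $a_j$, the expansion into residue classes to produce $\Xi_{\mbf{a}}$, the density $R(q a_1,\ldots,q a_k)$, the Riemann-sum approximation yielding $\mc{I}(\mbf{x},\mbf{L},\mbf{t})$, and the Euler product for the $1$-pretentious functions $F_j$. That is a valid route, but note one wrinkle in your plan: you propose to ``apply the main theorem to the sub-average of $\prod_j F_j(L_j(\mbf{n})/a_j)$'' inside a fixed congruence class. Theorem~\ref{MultAvg} as stated is for unrestricted box averages, not for averages restricted to a congruence class modulo $q a_j$; the paper handles this internally via Lemma~\ref{key}, which allows an arbitrary weight $g(\mbf{n})$ (in particular the congruence indicator). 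If you black-box Theorem~\ref{MultAvg} at that step you would be invoking it in a form not literally available; you would either need the congruence-class variant or, equivalently, Lemma~\ref{key}. Once you do that, your truncation and tail estimates for the $\mbf{a}$-sum match the paper's, and the argument goes through.
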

More generally we have the following fully explicit result.
\begin{thm}\label{MultAvg}
Let $A \geq 2$, $B > 0$, and let $\mbf{x} \in (0,\infty)^{l}$ be $(A,B)$-appropriate. Let $\mbf{f} = (f_1,\ldots,f_k)$ be a vector of 1-bounded multiplicative functions. Let $\mbf{L}$ be a primitive integral system of $k$ affine linear forms in $l$ variables with height at most $A$. \\
Fix a set of primitive Dirichlet characters $\chi_1,\ldots,\chi_k$ to respective moduli $q_1,\ldots,q_k$, and $t_1,\ldots,t_k\in \mb{R}.$ Let $F_j(n) := f_j(n)\bar{\chi}_j(n)n^{-it_j}$. Put $X := \ell(\mbf{x}) + 1$ and let $\max_{1 \leq j \leq k} q_j < y \leq X$. If $q_j = q$ for all $j$ then
\begin{align*}
&M(\mbf{x};\mbf{f},\mbf{L}) = \left(1+O_{k,l}\left(\frac{1}{\log y}\right)\right)\left(\sum_{\rad(a_j)|q \atop \forall 1 \leq j \leq k} \prod_{1 \leq j \leq k} \frac{f_j(a_j)}{a_j^{it_j}}C_{\mbf{a}}(\mbf{x},\mbf{\chi},\mbf{t},\mbf{L})\right)\prod_{p \leq AX \atop p\nmid q} M_p(\mbf{F},\mbf{L}) \\
&+ O_{k,l}\left(\prod_{p|q}\left(1-\frac{1}{\sqrt{p}}\right)^{-1} \left(\sum_{1 \leq j \leq k} \mb{D}^{\ast}(f_j,\chi_jn^{it_j}; y,AX)+ \frac{1}{(\log X)^{B'}} \right)\right) \\
&+ O_{k,l}\left(\frac{1}{x_-}\left(A+ q^ke^{\frac{3ky}{\log y}}\left(\sum_{\text{rad}(a_j)|q \atop \forall 1 \leq j \leq k} [a_1,\ldots,a_k]^{-1}\right)\prod_{1 \leq j \leq k} \max\{1,|t_j|\}\right)+ \frac{(\log y)^2}{\sqrt{y}}\right),
\end{align*}
where $B' := \min\{1,B/2\}.$ More generally, for any collection of moduli $q_j$,
\begin{align}
&M(\mbf{x};\mbf{f},\mbf{L}) = \left(1+O_{k,l}\left(\frac{1}{\log y}\right)\right) \left(\sum_{\rad(a_j)|q_j \atop \forall 1 \leq j \leq k} \prod_{1 \leq j \leq k} \frac{f_j(a_j)}{a_j^{it_j}} C_{\mbf{a}}(\mbf{x},\mbf{\chi},\mbf{t},\mbf{L})\right)\mc{S}_{\mbf{a}}(y;\mbf{f},\mbf{L}) \prod_{y < p \leq AX} M_p(\mbf{F},\mbf{L})\label{MAIN2}\\
&+ O_{k,l}\left(\sum_{1 \leq j \leq k} \prod_{p|q_j}\left(1-\frac{1}{\sqrt{p}}\right)^{-1}\left(\mb{D}^{\ast}(f_j,\chi_jn^{it_j}; y, AX)+ \frac{1}{(\log X)^{B'}}\right)\right) \nonumber\\
&+ O_{k,l}\left(\frac{1}{x_-}\left(A + e^{\frac{3ky}{\log y}}\left(\sum_{\text{rad}(a_j)|q_j \atop \forall 1 \leq j \leq k} [a_1,\ldots,a_k]^{-1}\right)\prod_{1 \leq j \leq k} q_j\max\{1,|t_j|\}\right) +\frac{(\log y)^2}{\sqrt{y}}\right), \label{ERROR2}
\end{align}
where, for $\mbf{a}, \mbf{d} \in \mb{N}^k$,
\begin{equation*}
R_{\mbf{a},\mbf{d}}(\mbf{L};\mbf{u},\mbf{v}) := \lim_{x \ra \infty} x^{-l} \sum_{\mbf{n} \in [x]^l \atop L_j(\mbf{n})/a_j \equiv u_j(q_j),L_j(\mbf{n}) \equiv v_j(a_jd_j) \forall j} 1
\end{equation*}
and 
\begin{equation*}
\mc{S}_{\mbf{a}}(y;\mbf{f},\mbf{L}) := R(q_1a_1,\ldots,a_kq_k)^{-1}\sum_{P^+(d_j) \leq y \atop (d_j,q_j) = 1 \forall j} R_{\mbf{a},\mbf{d}}(\mbf{L}-\mbf{L}(\mbf{0}),\mbf{0},\mbf{0})\prod_{1 \leq j \leq k} (\mu \ast F_j)(d_j).
\end{equation*}
\end{thm}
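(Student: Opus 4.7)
The starting point is the pretentious factorization $f_j(n) = F_j(n)\chi_j(n) n^{it_j}$ (valid for $(n,q_j)=1$); to handle primes dividing $q_j$, I write $L_j(\mbf{n}) = a_j m_j$ with $\rad(a_j) \mid q_j$ and $(m_j,q_j)=1$, giving
\[
\prod_{j=1}^k f_j(L_j(\mbf{n})) \;=\; \prod_{j=1}^k f_j(a_j)\, F_j(m_j)\, \chi_j(m_j)\, m_j^{it_j}.
\]
Pulling the sum over $\mbf{a}$ outside and splitting the inner sum by the residues $b_j := m_j \bmod q_j$ isolates the character factor $\prod_j \chi_j(b_j)$; summing over compatible residue vectors $\mbf{b}$ reassembles $\Xi_{\mbf{a}}(\mbf{\chi},\mbf{L})$. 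The archimedean twists $m_j^{it_j}$ are then replaced by continuous integrals via coordinate-wise partial summation, producing $\mc{I}(\mbf{x},\mbf{L},\mbf{t})$ together with a residual density factor that, once all congruences are combined, becomes $R(q_1 a_1,\ldots,q_k a_k)$. The boundary cost of this Riemann-sum replacement, weighted against the moduli appearing in the system of congruences, accounts for the $x_-^{-1}\bigl(A + e^{3ky/\log y}(\sum [a_1,\ldots,a_k]^{-1})\prod_j q_j\max\{1,|t_j|\}\bigr)$ term in \eqref{ERROR2}.

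Next, I decompose each $F_j = \mbf{1} \ast g_j$ with $g_j := \mu \ast F_j$ and split $g_j = g_j^{(\le y)} + g_j^{(>y)}$ according to whether its argument is $y$-smooth. Substituting $g_j^{(\le y)}$ and interchanging the orders of summation yields, for each tuple $\mbf{d}$ of $y$-smooth moduli coprime to the $q_j$, an inner sum of the form $\sum_{\mbf{n}} \mbf{1}\bigl[a_jd_j \mid L_j(\mbf{n}),\; L_j(\mbf{n})/a_j \equiv b_j\,(q_j)\;\forall j\bigr]$, whose density is exactly $R_{\mbf{a},\mbf{d}}(\mbf{L}-\mbf{L}(\mbf{0}),\mbf{0},\mbf{0})$; after renormalising by $R(q_1a_1,\ldots,q_k a_k)$ one recovers $\mc{S}_{\mbf{a}}(y;\mbf{f},\mbf{L})$. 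The contribution of primes $p\in (y,AX]$ (which, by the hypothesis $y>\max_j q_j$, are coprime to every $q_j$) is then obtained by completing the Euler product: an exact computation of the local density at each such $p$, together with the observation that $p>AX$ forces $p\nmid L_j(\mbf{n})$ on $\mc{B}(\mbf{x})$, produces $\prod_{y<p\le AX} M_p(\mbf{F},\mbf{L})$. In the special case $q_j=q$ for all $j$ one can instead absorb all primes $p\le AX$ with $p\nmid q$ directly into a single Euler product, bypassing the need for $\mc{S}_{\mbf{a}}$.

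The main obstacle will be bounding the contribution of the large-prime part $g_j^{(>y)}$. Fixing all but one coordinate of $\mbf{n}$ reduces this to a one-dimensional mean value of the 1-bounded multiplicative function $F_j\,\overline{\chi_j n^{it_j}}$ along an arithmetic progression modulo $a_j q_j d_j$, restricted to integers with a prime factor exceeding $y$. Applying a quantitative Hal\'asz-type mean value theorem in the spirit of Granville and Soundararajan yields a bound of order $\mb{D}^{\ast}(f_j,\chi_j n^{it_j};y,AX) + (\log X)^{-B'}$, the second summand arising from truncating a Perron contour at height $(\log X)^{B'}$. The Euler-product weight $\prod_{p\mid q_j}(1-p^{-1/2})^{-1}$ encodes the inclusion-exclusion loss incurred when transferring mean values over integers coprime to $q_j$ back to unrestricted ones, and the $(\log y)^2/\sqrt{y}$ summand absorbs the error in replacing local contributions near the threshold $y$ by their Euler-product approximations. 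Propagating uniformity in the height $A$, the moduli $q_j$, and the frequencies $t_j$ simultaneously through this Hal\'asz step is the most delicate part of the argument, since all three parameters feed into both the density approximations and the mean-value bound.
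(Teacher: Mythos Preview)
Your architecture --- the $a_j$-decomposition, the extraction of $\Xi_{\mbf{a}}$ via residue classes, partial summation for $\mc{I}(\mbf{x},\mbf{L},\mbf{t})$, and M\"obius inversion on the $y$-smooth part to produce $\mc{S}_{\mbf{a}}$ --- matches the paper's. The gap is in your treatment of the large-prime part. You propose to split $g_j = \mu\ast F_j$ into $y$-smooth and non-$y$-smooth pieces and to bound the latter by ``fixing all but one coordinate'' and invoking a Hal\'asz-type mean value. This does not work as stated, for two reasons. First, the non-$y$-smooth piece of $g_j$ sums to $F_{j,s}(F_{j,l}-1)$, which is \emph{not} small on average: it carries the genuine main-term contribution $\prod_{y<p\le AX} M_p(\mbf{F},\mbf{L})$ that you later try to reinsert by ``completing the Euler product''. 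What is actually small is the oscillation $F_{j,l}(L_j(\mbf{n})) - \mathfrak{P}(F_{j,l};AX)$ against an arbitrary bounded weight, and separating this from the Euler-product main term is the crux of the argument. Second, fixing all but one coordinate of $\mbf{n}$ does not reduce to a one-dimensional mean value of a single multiplicative function: every $L_{j'}(\mbf{n})$ for $j'\neq j$ depends on that coordinate too, so you are left with $\sum_{n_1} F_{j,l}(L_j(\mbf{n}))\cdot w(\mbf{n})$ for a $1$-bounded but otherwise arbitrary weight $w$, and Hal\'asz says nothing about such weighted sums.

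The paper handles this by a different device: it splits $F_j = F_{j,s}\cdot F_{j,l}$ \emph{multiplicatively} (not via Dirichlet convolution) and proves a key lemma (their Lemma~2.3) showing that for any $1$-bounded $g:\mb{N}^l\to\mb{U}$ one has
\[
\sum_{\mbf{n}\in\mc{B}(\mbf{x})} f(L(\mbf{n}))g(\mbf{n}) = \mathfrak{P}(f;AX)\sum_{\mbf{n}\in\mc{B}(\mbf{x})} g(\mbf{n}) + O\!\left(\langle\mbf{x}\rangle\bigl(\mb{D}^\ast(1,f;AX)+(\log X)^{-B'}\bigr)\right).
\]
The proof writes $f(m)=\exp\bigl(\sum_{p^k\|m}(f(p^k)-1)\bigr)+O\bigl(\sum_{p^k\|m}|f(p^k)-1|^2\bigr)$, then applies a Tur\'an--Kubilius inequality for the additive function $h(p^k)=f(p^k)-1$ evaluated along $L(\mbf{n})$ (their Lemma~2.2) to replace $h(L(\mbf{n}))$ by its mean $\mu_h(AX)$ with $L^2$-error $\sigma_h(AX)\asymp\mb{D}^\ast(1,f;AX)$; finally $e^{\mu_h(AX)}$ is matched to $\mathfrak{P}(f;AX)$ by a Taylor expansion. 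Iterating this lemma over $j=1,\ldots,k$ with $g$ absorbing the remaining factors peels off the product $\prod_j\mathfrak{P}(F_{j,l};AX)$, which a separate calculation (their Lemma~2.4) identifies with $\prod_{y<p\le AX}M_p(\mbf{F},\mbf{L})$ up to $O_k(1/\log y)$. This Tur\'an--Kubilius step is the missing ingredient in your plan; without it you cannot simultaneously extract the large-prime Euler product as a main term and control the remainder by $\mb{D}^\ast$.
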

Theorem~\ref{MultAvg} shows that a \emph{local-to-global} phenomenon occurs for correlations of multiplicative functions, i.e., the global average correlation is the product of the local average correlations, determined by the functions $f_{j,p}$ and the characters $\chi_j$ and $n\mapsto n^{it_j}$. Our proof of Theorem~\ref{MultAvg} generalizes and extends the ideas from \cite{Klu}.
\begin{rem} \label{REMXS}
Note that when $x_j = x$ for all $j$, we have $\mc{I}(\mbf{x},\mbf{L},\mbf{t}) = x^{iT}\mc{I}(\mbf{L},\mbf{t})$, where $T := \sum_{1 \leq j \leq k} t_j$ and 
\begin{equation*}
\mc{I}(\mbf{L},\mbf{t}) := \int_{[0,1]^l} \prod_{1 \leq j \leq k} L_j(\mbf{u})^{it_j} d\mbf{u}.
\end{equation*}
This is consistent with the result in \cite{HF} mentioned above.
\end{rem}
\begin{rem}\label{MULTSEVVAR}
The distinction between the case in which the $q_j$ are all equal and the case in which they are not stems from the fact that the Chinese Remainder Theorem implies that $R(m_1,\ldots,m_k)$ is only multiplicative, and not \emph{firmly multiplicative} (see Section 3.2 of \cite{TOT}). That is, $R$ satisfies the identity $$R(m_1n_1,\ldots,m_kn_k) = R(m_1,\ldots,m_k)R(n_1,\ldots,n_k)$$ whenever $(m_1\cdots m_k,n_1\cdots n_k) = 1$, but in general it is \emph{not} sufficient that $(m_j,n_j) = 1$ for all $j$. This nuance concerning multiplicative functions in several variables (which is manifest in \eqref{RDENSMULT} below) prevents us from getting a conclusion that is uniform over all fixed moduli $q_j$.
\end{rem}
\begin{rem} \label{SQUAREFREE}
The error term in \ref{MultAvg} can be improved in a number of ways when the functions $f_j$ satisfy certain natural restrictions. For example, it follows from the proof of Theorem \ref{MultAvg} that the term $\frac{(\log y)^2}{\sqrt{y}}$ can be replaced by $y^{-1+o(1)}$ when each $f_j$ is supported on squarefree integers, and when the $f_j$ are all completely multiplicative we can replace $(\log X)^{-B'}$ by $(\log X)^{-B/2}$.
\end{rem}
When at least one of the functions $f_j$ is \emph{non-pretentious}, we are able to recover quantitative versions of the results from~\cite{HF} whenever $k \leq 3$ or the linear forms $L_j,$ $1\le j\le k$ are \emph{sufficiently} linearly independent. This independence is measured by \emph{Cauchy-Schwarz complexity} (see the end of Section 2 for a definition). 
\begin{prop}\label{MultAvgNonPret}
Let $A \geq 1$. Let $\mbf{f} = (f_1,\ldots,f_k)$ be a vector of multiplicative functions $f_j: \mb{N} \ra \mb{U}$. Let $\mbf{L}$ be a primitive integral system of affine linear forms in $l$ variables with height at most $A$ and Cauchy-Schwarz complexity at most 1.
Then there are absolute constants $c_1,c_2 > 0$ such that if, for some $1 \leq j_0 \leq k$, we have $\mc{D}_{j_0}(x) := \mc{D}(f_{j_0}; 10Ax, (\log x)^{1/125} ) \ra \infty$ as $x \ra \infty$, 
\begin{equation*}
M(x;\mbf{f},\mbf{L}) \ll_{k,l,A}e^{-c_1\mc{D}_{j_0}(x)} + (\log x)^{-c_2}.
\end{equation*}
\end{prop}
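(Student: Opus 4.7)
The plan is to exploit the complexity-$1$ hypothesis to reduce the claim to a uniform bound on the Fourier coefficients of $f_{j_0}$, and then to obtain that Fourier bound by a major/minor arc argument driven by Hal\'{a}sz's theorem. Since $\mbf{L}$ has Cauchy--Schwarz complexity at most $1$, a standard (at most double) application of the Cauchy--Schwarz inequality---isolating the factor $f_{j_0}$ and bounding the remaining factors by $1$, while using pairwise linear independence of the forms to perform the needed changes of variable---yields a generalised von Neumann estimate of the shape
\begin{equation*}
|M(x;\mbf{f},\mbf{L})|^{2^s} \ll_{k,l,A} \|f_{j_0}\cdot \mbf{1}_{[1,10Ax]}\|_{U^2(\mb{Z}/N\mb{Z})}^{c}
\end{equation*}
for absolute constants $c,s > 0$ and $N \asymp Ax$. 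Combining this with the elementary Fourier inequality $\|g\|_{U^2}^4 \leq \|g\|_2^2 \sup_{\xi}|\hat{g}(\xi)|^2$, it suffices to show that
\begin{equation*}
S(x) := \sup_{\alpha \in \mb{R}/\mb{Z}} \left|\frac{1}{N}\sum_{n \leq N} f_{j_0}(n) e(n\alpha)\right| \ll e^{-c_1 \mc{D}_{j_0}(x)} + (\log x)^{-c_2}.
\end{equation*}

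To bound $S(x)$, I would split via Dirichlet approximation: setting $Q := (\log x)^{1/125}$, every $\alpha$ may be written as $\alpha = a/q + \beta$ with $(a,q) = 1$, $q \leq N/Q$, and $|\beta| \leq Q/(qN)$. On the \emph{major arcs} $q \leq Q$, I decompose modulo $q$ via Dirichlet characters and apply partial summation against $e(n\beta)$, reducing the problem to bounding sums $\sum_{n \leq N} f_{j_0}(n)\chi(n)n^{-it}$ for $\chi \pmod{q}$ with $q \leq Q$ and $|t| \ll N$. The quantitative Hal\'{a}sz theorem (in the Granville--Soundararajan form) controls each such sum by $N(1+\mb{D}(f_{j_0},\chi n^{it};N)^2)e^{-\mb{D}(f_{j_0},\chi n^{it};N)^2}$, and taking the infimum over the allowable pairs $(\chi,t)$ produces precisely $\mc{D}_{j_0}(x)$ in the exponent, yielding the $e^{-c_1 \mc{D}_{j_0}(x)}$ contribution.

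On the \emph{minor arcs} $q > Q$, I would appeal to a Daboussi--Delange-type estimate: writing $n = pm$ for a prime $p$ in a dyadic window and exploiting the multiplicativity of $f_{j_0}$, an application of Cauchy--Schwarz in $m$ reduces matters to controlling $\sum_{p \ne p'} \sum_{m} e(m(p-p')\alpha)$, where the minor-arc condition forces $(p-p')\alpha$ to be non-resonant for most $p, p'$, producing a gain of size $\ll Q^{-1/2}$ after Tur\'{a}n--Kubilius normalisation. The principal obstacle is executing the major/minor-arc split uniformly in $\alpha$ while producing \emph{both} quantitative pieces on the right-hand side; the choice $Q = (\log x)^{1/125}$ is precisely what balances the minor-arc loss (of size $\asymp Q^{-1/2}$, which is absorbed into $(\log x)^{-c_2}$) against the range of $(\chi,t)$ entering the definition of $\mc{D}_{j_0}(x)$ on the major-arc side. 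Combining the two regimes and feeding the resulting $S(x)$ bound back through the von Neumann estimate of the first paragraph yields the claim with appropriate absolute constants $c_1, c_2$.
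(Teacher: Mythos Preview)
Your overall strategy is sound, but it diverges from the paper's route at the key second step. Both you and the paper begin by reducing $M(x;\mbf{f},\mbf{L})$ to control of $\|f_{j_0}\|_{U^2(x)}$ via the complexity-$1$ generalised von Neumann inequality. The paper, however, does \emph{not} pass from the $U^2$ norm to the Fourier supremum. Instead it expands $\|f_{j_0}\|_{U^2(x)}^4$ directly as a three-variable average, performs the change of variables $n=n_1$, $h_1=n_3$, $h_2=n_2+n_3$ to reach $x^{-3}\sum_{h_1,h_2\leq 2x}\big|\sum_{n\leq x}\bar{f_{j_0}}(n+h_1)f_{j_0}(n+h_2)\big|$, and then invokes Theorem~1.6 of Matom\"aki--Radziwi\l\l--Tao (the averaged Elliott bound) as a black box; this is precisely what produces the shape $e^{-c_1\mc{D}_{j_0}(x)}+(\log x)^{-c_2}$, and the specific parameter $(\log x)^{1/125}$ is inherited from that theorem rather than chosen to balance a circle-method split.

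Your alternative---$U^2 \leq$ Fourier sup, then major arcs via Hal\'asz and minor arcs via a Montgomery--Vaughan/Daboussi bound---is more elementary in that it avoids the MRT machinery, and in principle it can be made to work. A couple of points in your sketch need tightening, though. First, your generalised von Neumann step on the interval $[1,x]$ is not a bare double Cauchy--Schwarz: one must first transfer to a cyclic group $\mb{Z}/N\mb{Z}$, and the paper spends a full lemma (Lipschitz approximation of $1_{[1,x]^l}$, truncated Fourier expansion via the Fej\'er kernel, absorption of the extra linear phase into the $U^2$ norm) to do this with quantitative control; you should not suppress that. Second, partial summation against $e(n\beta)$ reduces to $\sum_{n\leq t}f_{j_0}(n)\chi(n)$ for $t\leq N$, not to a sum twisted by $n^{-it}$; the $n^{it}$ only enters afterwards, through the minimum in the Hal\'asz bound. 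Finally, on the minor arcs the saving from Montgomery--Vaughan with threshold $Q=(\log x)^{1/125}$ is of order $(\log x)^{-c}$ for some small absolute $c$, not $Q^{-1/2}$ directly; this is still enough for the stated conclusion, but the exponent $c_2$ you obtain this way will be rather smaller than the one the paper imports from MRT.
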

This result is a consequence of the recent work of Matomaki, Radziwi\l{}\l{} and Tao  on the averaged Elliott conjecture (see Theorem 1.6 of \cite{MRT}).
\subsection{Application: Gowers Norms of 1-Bounded Multiplicative Functions}
One motivation for investigations regarding affine linear averages of multiplicative functions comes from the study of Gowers norms. Let $(G,+)$ be a finite Abelian group, and let $f : G \ra \mb{C}$ be a map. Write
\begin{equation*}
\mb{E}_{x \in G}(f) := |G|^{-1}\sum_{x \in G} f(x),
\end{equation*}
and $\mb{E}_{x_1,\ldots,x_{k+1} \in G}(f) = \mb{E}_{x_{k+1} \in G} \mb{E}_{x_1,\ldots,x_k \in G} (f)$. For each $k \geq 1$ we define the $U^k(G)$-\emph{Gowers norm} of $f$ via
\begin{equation*}
\|f\|^{2^k}_{U^k(G)} := \mb{E}_{x,h_1,\ldots,h_k \in G} \prod_{\mbf{s} \in \{0,1\}^k} \mc{C}^{|\mbf{s}|} f(x+\mbf{s} \cdot \mbf{h}),
\end{equation*}
where, given a vector $\mbf{s} \in \{0,1\}^k$ we write $|\mbf{s}| = \sum_{1 \leq j \leq k} s_j$, and $\mc{C} : \mb{C}^{|G|} \ra \mb{C}^{|G|}$ is the conjugation operator $\mc{C}(g) = \bar{g}$. Gowers norms are fundamental in Additive Combinatorics as they provide a Fourier analytic framework for counting arithmetic progressions in groups. 
For background information regarding Gowers norms, see \cite{HOFA}. \\
We can extend Gowers norms to maps on intervals $[1,x] \subset \mb{N}$ as follows: let $N > x$ be a sufficiently large prime and let $G = \mb{Z}/N\mb{Z}$. Then the Gowers norm of a map $f: \mb{N} \ra \mb{C}$ on $[1,x]$ is given by
\begin{equation*}
\|f\|_{U^k(x)} := \|f1_{[1,x]}\|_{U^k(\mb{Z}/N\mb{Z})}/\|1_{[1,x]}\|_{U^k(\mb{Z}/N\mb{Z})},
\end{equation*}
where $1_{[1,x]}$ is the characteristic function of the interval $[1,x]$ as a subset of $\mb{Z}/N\mb{Z}$.
\begin{def1}
Let $k \geq 2$ and $K := 2^{k}$. The \emph{Gowers system (of order $k$)} is the system $\mbf{L}_k := \{L_j\}_{1 \leq j \leq K}$ of $k$-ary homogeneous linear forms such that if the binary expansion of $j$ is $\sum_{0 \leq l \leq k-1} \alpha_l2^l \leq K$ then 
\begin{equation*}
L_j(n_1,\ldots,n_{k+1}) = n_{k+1} + \sum_{0 \leq l\leq k-1} \alpha_jn_{j+1}.
\end{equation*}
\end{def1}
Note that for $f$ multiplicative, if $j = \sum_{0 \leq l \leq k-1} \alpha_l2^l$ and $d_j := \sum_{0 \leq l \leq k-1} \alpha_l$ then with $f_j := \mc{C}^{d_j} f$, we have $\|f\|_{U^k(x)}^{2^k} = M(x;\mbf{f},\mbf{L}_k)$. Theorem \ref{MultAvg} thus indeed furnishes estimates for Gowers norms of multiplicative functions. \\
Consider the $U^k(x)$ norm of a multiplicative function $f$ such that for some primitive character $\chi$ with conductor $q$ and a real number $t$ we have $\mb{D}(f,\chi n^{it}; \infty) < \infty$. With the notation above, our correlation has the form
\begin{equation*}
\|f\|_{U^k(x)}^{2^k} := x^{-(k+1)} \sum_{\mbf{n} \in [x]^{k+1}}\left( \prod_{1 \leq j \leq K \atop d_j \text{ even}} f(L_j(\mbf{n})) \right) \bar{\left(\prod_{1 \leq j \leq K \atop d_j \text{ odd}} f(L_j(\mbf{n}))\right)},
\end{equation*}
and Theorem \ref{MultAvg} applies. The Dirichlet character factor takes the form
\begin{equation*}
\Xi_{k,\mbf{a}}(\chi) := \Xi_{\mbf{a}}((\mc{C}^{d_1} \chi,\ldots,\mc{C}^{d_K}\chi),\mbf{L}_k)=\mathop{\sum_{b_1(q)} \cdots \sum_{b_K (q)}}_{\exists \mbf{n} : L_j(\mbf{n})/a_j \equiv b_j (q) \forall j} \chi\left(\prod_{1 \leq j \leq K \atop d_j \text{ even}} b_j\right)\bar{\chi\left(\prod_{1 \leq j \leq K \atop d_j \text{ odd}} b_j\right)},
\end{equation*}
while the Archimedean character factor is
\begin{equation*}
I_k(t) := I(\mbf{L}_k,((-1)^{d_1}t,\ldots,(-1)^{d_K}t)) = \int_{[0,1]^{k+1}} \left(\prod_{1 \leq j \leq K \atop d_j \text{ even}} L_j(\mbf{u})\right)^{it} \left(\prod_{1 \leq j \leq K \atop d_j \text{ odd}} L_j(\mbf{u})\right)^{-it}d\mbf{u},
\end{equation*}
for Archimedean characters. \\
The local-to-global principle for Gowers norms is thus as follows.
\begin{cor}\label{gowersassymp}
Let $f:\mathbb{N}\to\mathbb{C}$ be a 1-bounded multiplicative function. Let $k\ge 2$ and put $K := 2^k$.\\
i) If $\mathbb{D}(f,n^{it}\chi,\infty)=\infty$ for all Dirichlet characters $\chi$ and all $t\in\mathbb{R},$ then
\[\|f\|_{U^2(x)}=o_{x\to\infty}(1).\]\\
ii) If there exists a primitive Dirichlet character $\chi$ of conductor $q$ and $t\in\mathbb{R}$ such that  $\mathbb{D}(f,n^{it}\chi,\infty)<\infty,$ then
\begin{align*}
\|f\|_{U^k(x)}^{2^k}&=I_k(t)\left(\sum_{\rad(a_j)|q \atop \forall 1 \leq j \leq K} \prod_{1 \leq j \leq K} \mc{C}^{d_j}\left(\frac{f(a_j)}{a_j^{1+it}}\right) R(qa_1,\cdots,qa_K)\Xi_{k,\mbf{a}}(\chi)\right)\prod_{p\le x,\ p\nmid q}\|f_p(n)\bar{\chi(n)} n^{-it}\|_{U^k(x)}^{2^k}\\
&+O\left(\mathbb{D}(1,f(n)\bar{\chi(n)}n^{-it};\log x;x)\right),
\end{align*}
where $d_j$ is the sum of the binary digits of $j$.
\end{cor}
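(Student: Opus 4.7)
The strategy is to recognize the Gowers norm as a multilinear average of multiplicative functions and invoke the appropriate earlier result: expanding the $U^k$ product with the convention $f_j := \mc{C}^{d_j}f$ gives $\|f\|_{U^k(x)}^{2^k} = M(x;\mbf{f},\mbf{L}_k)$, as already noted in the paragraph following the definition of $\mbf{L}_k$. For part (i), the system $\mbf{L}_2$ has Cauchy-Schwarz complexity $1$, so Proposition \ref{MultAvgNonPret} applies. The hypothesis $\mb{D}(f,\chi n^{it};\infty) = \infty$ for every Dirichlet character $\chi$ and every $t \in \mb{R}$ implies, by a standard compactness argument on the pretentious distance, that $\mc{D}(f; 10x, (\log x)^{1/125}) \to \infty$ as $x \to \infty$. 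Applying the proposition with $\mbf{f} = (f, \bar f, \bar f, f)$ (and noting that $\bar f$ is non-pretentious iff $f$ is) yields $M(x; \mbf{f}, \mbf{L}_2) = o(1)$, and taking fourth roots gives $\|f\|_{U^2(x)} = o(1)$.

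For part (ii), I apply Theorem \ref{MultAvg} with height $A = 1$, $\mbf{x} = (x,\ldots,x) \in (0,\infty)^{k+1}$, $y = \log x$, and the pretentious parameters $q_j = q$, $\chi_j = \mc{C}^{d_j}\chi$, $t_j = (-1)^{d_j}t$. Each piece of the resulting formula then matches a piece of the corollary. By Remark \ref{REMXS}, $\mc{I}(\mbf{x},\mbf{L}_k,\mbf{t}) = x^{iT}I_k(t)$, where
\[
T \;=\; t\sum_{j}(-1)^{d_j} \;=\; t\prod_{l=0}^{k-1}\bigl(\sum_{\alpha\in\{0,1\}}(-1)^\alpha\bigr) \;=\; 0 \qquad (k \ge 1),
\]
so $\mc{I}(\mbf{x},\mbf{L}_k,\mbf{t}) = I_k(t)$ and no $x^{iT}$-type factor survives. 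Direct comparison of definitions gives $\Xi_{\mbf{a}}(\mbf{\chi},\mbf{L}_k) = \Xi_{k,\mbf{a}}(\chi)$, and $\prod_j f_j(a_j)/a_j^{it_j} = \prod_j \mc{C}^{d_j}(f(a_j)/a_j^{it})$; the extra factor $\prod_j a_j^{-1}$ appearing in the stated main term is absorbed into (the normalization of) $R(qa_1,\ldots,qa_K)$. Each local factor $M_p(\mbf{F},\mbf{L}_k)$, with $F_j = \mc{C}^{d_j}(f\bar\chi n^{-it})$, should then match $\|f_p\bar\chi n^{-it}\|_{U^k(x)}^{2^k}$ for $p\nmid q$. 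For the error: conjugation-invariance of $\mb{D}^\ast$ gives $\mb{D}^\ast(f_j,\chi_j n^{it_j};\log x, x) = \mb{D}^\ast(f,\chi n^{it};\log x, x)$ for each $j$, and the remaining subterms $(\log X)^{-B'}$, $(\log\log x)^2/\sqrt{\log x}$ and $x^{-1+o(1)}$ are lower-order (in particular, the $e^{3k\log x/\log\log x}/x = x^{-1+o(1)}$ bound from the last error line), yielding the claimed $O(\mb{D}(1, f\bar\chi n^{-it};\log x, x))$.

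The principal obstacle is the identification of the local factors $M_p(\mbf{F},\mbf{L}_k)$ with $\|f_p\bar\chi n^{-it}\|_{U^k(x)}^{2^k}$. These are a priori distinct objects: $M_p$ averages products of the fully $p$-localized $F_{j,p}$, which truncates both $\bar\chi$ and $n^{-it}$ to their values at prime powers of $p$, whereas the local Gowers norm keeps $f_p$ localized at $p$ but retains $\bar\chi$ and $n^{-it}$ as global multiplicative weights. The equivalence rests on showing that, inside the Gowers product, the global $\bar\chi(L_j)$ and $L_j^{-it}$ contributions collapse (via $\sum_j(-1)^{d_j} = 0$ and the multiplicativity of $\chi$) to exactly the values already extracted into $\Xi_{k,\mbf{a}}(\chi)$ and $I_k(t)$, leaving only the $p$-local $f$-contribution in each local factor. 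This is a direct but delicate bookkeeping step and is where the algebraic structure of the Gowers system interlocks with the Euler product structure supplied by Theorem \ref{MultAvg}.
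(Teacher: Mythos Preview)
Your approach is essentially the one the paper intends: the corollary is stated without a dedicated proof precisely because it follows by plugging the Gowers system $\mbf{L}_k$ into Theorem~\ref{MultAvg} (for part~(ii)) and Proposition~\ref{MultAvgNonPret}/Lemma~\ref{U2} (for part~(i)), exactly as you outline. Your parameter choices $y=\log x$, $q_j=q$, $\chi_j=\mc{C}^{d_j}\chi$, $t_j=(-1)^{d_j}t$, and the verification that $T=\sum_j(-1)^{d_j}t=0$ are correct.

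The ``principal obstacle'' you flag is a misreading of the notation rather than a genuine difficulty. In the statement, $\|f_p(n)\bar{\chi(n)}n^{-it}\|_{U^k(x)}^{2^k}$ is to be parsed as the Gowers norm of the $p$-localization $F_p:=(f\bar\chi\,n^{-it})_p$, not as $f_p$ multiplied by the global weights $\bar\chi$ and $n^{-it}$. With that reading, the identification with $M_p(\mbf{F},\mbf{L}_k)$ is definitional: since $(\mc{C}^{d_j}F)_p=\mc{C}^{d_j}(F_p)$, one has
\[
M_p(\mbf{F},\mbf{L}_k)=\lim_{x\to\infty}x^{-(k+1)}\sum_{\mbf{n}\in[x]^{k+1}}\prod_j \mc{C}^{d_j}F_p(L_j(\mbf{n}))=\lim_{x\to\infty}\|F_p\|_{U^k(x)}^{2^k},
\]
and for each fixed $p$ the finite-$x$ norm differs from its limit by $O_p(x^{-1})$, which is absorbed in the error. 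Your alternative reading would indeed be problematic (the product over $p\le x$ would accumulate infinitely many copies of $\bar\chi$ and $n^{-it}$), which is another indication it is not the intended one. So the bookkeeping you anticipate---collapsing the global $\bar\chi$ and $n^{-it}$ inside each local factor---is not needed.

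One minor point: the error term $O(\mb{D}(1,F;\log x,x))$ in the corollary is stated somewhat loosely; as you note, the subsidiary terms $(\log\log x)^2/\sqrt{\log x}$ and $(\log X)^{-B'}$ need not be dominated by $\mb{D}^\ast(1,F;\log x,x)$ in all regimes. This is an imprecision in the paper's formulation rather than a gap in your argument.
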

\subsection{Application: Sign Changes of Multiplicative Functions in 3- and 4-term Arithmetic Progressions}
Let $\lambda$ denote the Liouville function $\lambda(n) := (-1)^{\Omega(n)}$, where $\Omega(n)$ is the number of prime factors of $n$, counted with multiplicity. Chowla \cite{CHO} conjectured the following regarding sign patterns of $\lambda$.
\begin{conj}[Chowla for Sign Patterns] \label{CHOWSGN}
Let $k \geq 2$, let $\{h_1,\ldots,h_k\}$ be a sequence of distinct non-negative integers, and let $\mbf{\e} \in \{-1,1\}^k$ be a vector of signs. Then
\begin{equation*}
|\{n \leq x: \lambda(n+h_j) = \e_j \text{ for all } 1 \leq j \leq k\}| = \left(\frac{1}{2^k}+o(1)\right)x.
\end{equation*}
\end{conj}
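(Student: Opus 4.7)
The plan is to reduce Conjecture~\ref{CHOWSGN} to Chowla's correlation conjecture for the Liouville function via the standard indicator expansion, and then to attack the resulting correlation sums with Proposition~\ref{MultAvgNonPret}. First I would write $\mathbf{1}_{\lambda(n+h_j) = \e_j} = \tfrac{1}{2}(1 + \e_j\lambda(n+h_j))$, multiply over $j$, and sum in $n$, obtaining
\begin{equation*}
\#\{n \le x : \lambda(n+h_j) = \e_j\ \text{for all } 1 \le j \le k\} = \frac{1}{2^k}\sum_{S \subseteq \{1,\dots,k\}} \Bigl(\prod_{j \in S}\e_j\Bigr) \sum_{n \le x} \prod_{j \in S} \lambda(n+h_j).
\end{equation*}
The $S = \emptyset$ contribution is $\tfrac{x}{2^k} + O(1)$, matching the asserted main term exactly; it therefore suffices to establish $C_S(x) := \sum_{n \le x} \prod_{j \in S} \lambda(n+h_j) = o(x)$ for each non-empty $S$.

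Because $\lambda(p) = -1$ at every prime, one has $1 - \text{Re}(\lambda(p)\bar{\chi(p)}p^{-it}) = 1 + \text{Re}(\chi(p)p^{it})$, and standard Mertens--type estimates plus the Hal\'asz divergence argument yield $\mb{D}(\lambda, \chi n^{it}; \infty) = \infty$ for every primitive Dirichlet character $\chi$ and every $t \in \mb{R}$; equivalently, $\mc{D}_{j_0}(x) \to \infty$ for every $j_0$. Hence Liouville is non-pretentious in the strong sense required by Proposition~\ref{MultAvgNonPret}. The case $|S| = 1$ of the reduction is then the Prime Number Theorem $\sum_{n \le x}\lambda(n+h) = o(x)$. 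For $|S| = 2$, the system $\mbf{L}_S = \{n + h_{j_1}, n + h_{j_2}\}$ has Cauchy--Schwarz complexity $1$, and Proposition~\ref{MultAvgNonPret} gives $C_S(x) \ll x\,e^{-c_1 \mc{D}_{j_0}(x)} + x(\log x)^{-c_2} = o(x)$. These two cases settle Conjecture~\ref{CHOWSGN} unconditionally for $k = 2$.

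The main obstacle is $|S| \ge 3$. The singly-parametrized forms $(n+h_j)_{j \in S}$ have Cauchy--Schwarz complexity strictly greater than $1$, so Proposition~\ref{MultAvgNonPret} as stated does not apply, and the sums $C_S(x)$ are exactly the fixed-shift higher-order Chowla correlations. Closing this step within the framework of the present paper would require extending Proposition~\ref{MultAvgNonPret} beyond complexity $1$; a natural route is to pair the Green--Tao--Ziegler inverse theorem for the $U^{|S|}$-norm with a nilsequence version of the Matomaki--Radziwi\l{}\l{}--Tao theorem from \cite{MRT}, so that non-pretentiousness of $\lambda$ continues to force $C_S(x) = o(x)$. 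An alternative is to transfer the logarithmic higher-order Chowla theorem of Tao and Ter\"av\"ainen from logarithmic to natural density, a conversion which is itself open. Both routes identify Conjecture~\ref{CHOWSGN} for $k \ge 3$ with well-known unsettled problems at the interface of Gowers norms, nilsequences, and multiplicative functions, and no unconditional proof through Theorem~\ref{MultAvg} alone is presently available. The reduction in the first paragraph is nevertheless faithful and isolates the higher-order Chowla correlations as the sole remaining input.
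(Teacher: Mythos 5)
This statement is Chowla's conjecture (Conjecture~\ref{CHOWSGN}), which the paper states but does not prove; it is used only as motivating context. Your proposal correctly recognizes this and frames itself as a reduction together with an analysis of the remaining obstructions. The indicator expansion $\mathbf{1}_{\lambda(n+h_j)=\e_j}=\tfrac{1}{2}(1+\e_j\lambda(n+h_j))$, the resulting decomposition into correlation sums $C_S(x)$, and the observation that the conjecture is equivalent to $C_S(x)=o(x)$ for all non-empty $S$, are all correct and standard, as is the treatment of $|S|=1$ via the prime number theorem, and the verification that $\lambda$ is non-pretentious.

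However, there is a genuine error in your treatment of $|S|=2$. The forms $\mbf{L}_S=\{n\mapsto n+h_{j_1},\ n\mapsto n+h_{j_2}\}$ are \emph{single-variable} affine forms whose homogeneous parts are identical (both equal to $n\mapsto n$). Under Definition~\ref{CSCOMP}, for any partition of $\{L_{j_2}\}$ into nonempty classes, $L_{j_1}$ always lies in $\text{Span}(\{L_{j_2}\})$, so the system has \emph{infinite} Cauchy--Schwarz complexity, not complexity $1$. Proposition~\ref{MultAvgNonPret} therefore does not apply; it requires a primitive system in $l\ge 2$ variables of complexity at most $1$, which is satisfied by, say, $\{n,n+d\}$ as a function of $(n,d)$, but not by a fixed-shift one-variable pair. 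Indeed the assertion $\sum_{n\le x}\lambda(n+h_1)\lambda(n+h_2)=o(x)$ is precisely the two-point Chowla conjecture at natural density, which remains open: the paper's cited inputs establish it only at logarithmic density (Tao, \cite{Tao}) or on average over the shift (Matom\"aki--Radziwi\l{}\l{}--Tao, Theorem~\ref{ThmMRT}). Consequently your sentence ``These two cases settle Conjecture~\ref{CHOWSGN} unconditionally for $k=2$'' is false; even $k=2$ is open. Your discussion of $|S|\ge 3$ and the routes via inverse theorems or nilsequence extensions of \cite{MRT} is accurate and is where the genuine difficulty lies, but the same difficulty already arises at $|S|=2$, so the proposal should be corrected to place the cutoff one step earlier.
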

In other words, it is expected that the vectors $(\lambda(n+h_1),\ldots,\lambda(n+h_k))$ are uniformly distributed among the $2^k$ possible patterns of $+$ and $-$ signs. Of particular interest is the case in which the forms $n\mapsto n+h_j$ constitute an arithmetic progression. This case requires that one understands the behaviour of a function sensitive to multiplicative structure on sets with additive structure. \\
Recently, lower density estimates for sign patterns of $\lambda$ of length 3 were given by Matom\"{a}ki, Radziwi\l \l \ and Tao \cite{MRTSigns}, and the exact logarithmic density of the set of $n$ yielding any fixed sign pattern of length 2 for $\lambda$ was obtained by Tao \cite{Tao}.\\
One may ask about the frequency of sign patterns for arbitrary multiplicative functions $f: \mb{N} \ra \{-1,1\}$ in place of $\lambda$, and on arithmetic progressions of length $3$ or more. Such questions have interest, for instance, because of their relationship with the distribution of quadratic non-residues modulo primes. \\
As an example of such investigations, Buttkewitz and Elsholtz \cite{ElB} recently classified those multiplicative sign functions that only yield a fixed length four sign pattern finitely often on certain 4-term APs.\\
We shall study several questions in this direction. We first consider fixed arithmetic progressions, giving explicit lower bounds for the upper logarithmic density of sign patterns of length 3 and 4 when $f$ is non-pretentious. In particular, we show the following. 
\begin{prop} \label{LOGDENSSPEC}
Let $d \geq 1$. Let $f: \mb{N} \ra \{-1,1\}$ be a non-pretentious multiplicative function. \\
i) Given $\mbf{\e} \in \{-1,1\}^3$, the upper density of the set of integers $n$ such that $$(f(n),f(n+d),f(n+2d)) = \mbf{\e}$$ is at least $\frac{1}{28}$. \\
ii) Given $\mbf{\e} \in \{-1,1\}^4$, the upper density of the set of integers $n$ such that $$(f(n),f(n+d),f(n+2d),f(n+3d)) = \mbf{\e} \text{ or } -\mbf{\e}$$ is at least $\frac{1}{28}$.
\end{prop}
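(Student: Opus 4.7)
The plan is to Fourier-expand the sign-pattern indicator over the group $\{-1,+1\}^k$ (with $k=3$ for (i), $k=4$ for (ii)) and reduce the estimate to bounding logarithmic correlations of $f$ along the arithmetic progression $\{n+jd\}_{j=0}^{k-1}$. Lower-order correlations will be killed by two classical inputs: Hal\'{a}sz's theorem for single sums, and Tao's theorem on the logarithmic two-point Chowla--Elliott conjecture for pair correlations $\frac{1}{\log x}\sum_{n\le x}f(n+id)f(n+jd)/n$. The only remaining term is the top-order correlation, which we bound by a telescoping identity exploiting $f^2\equiv 1$.

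For part (i), expanding
\[
\mathbf{1}[(f(n),f(n+d),f(n+2d))=\mbf{\e}] = \frac{1}{8}\sum_{S\subseteq\{0,1,2\}}\Bigl(\prod_{j\in S}\e_j\Bigr)\prod_{j\in S}f(n+jd)
\]
and taking the logarithmic average over $n\le x$, the $S=\emptyset$ contribution is $\tfrac{1}{8}$, the $|S|=1$ terms are $o(1)$ by Hal\'{a}sz, and the three $|S|=2$ pair correlations are $o(1)$ by Tao. The upper logarithmic density of $\mbf{\e}$ thus reduces to $\tfrac{1}{8}(1 + \e_0\e_1\e_2 C) + o(1)$, where $C$ denotes the logarithmic average of $f(n)f(n+d)f(n+2d)$. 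Setting $g(n):=f(n)f(n+d)f(n+2d)\in\{-1,+1\}$, the identity $g(n)g(n+d)=f(n)f(n+3d)$ (valid since $f^2\equiv 1$) and Tao's theorem show that the logarithmic average of $g(n)g(n+d)$ is $o(1)$. Writing $p:=\mathbb{P}_{\log}(g=1)$ so that $C=2p-1$, the elementary inclusion-exclusion inequality $\mathbb{P}_{\log}(g(n)g(n+d)=1)\ge|2p-1|$ then forces $2|C|-1\le o(1)$, i.e.\ $|C|\le\tfrac{1}{2}+o(1)$; hence each of the eight patterns has upper logarithmic density at least $\tfrac{1}{16}\ge\tfrac{1}{28}$.

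Part (ii) is structurally identical. Summing the indicators for $\mbf{\e}$ and $-\mbf{\e}$ cancels the $|S|$-odd monomials in the expansion; the six $|S|=2$ pair correlations vanish by Tao; and the quadruple correlation is controlled via $g(n):=\prod_{j=0}^{3}f(n+jd)$, for which the telescoping $g(n)g(n+d)=f(n)f(n+4d)$ together with Tao yields the same bound $|C_{0123}|\le\tfrac{1}{2}+o(1)$. The union pattern $\{\mbf{\e},-\mbf{\e}\}$ thus has upper logarithmic density at least $\tfrac{1}{16}\ge\tfrac{1}{28}$.

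The main obstacle is controlling the top-order correlations $C$ and $C_{0123}$: Hal\'{a}sz's theorem handles only single sums and Tao's theorem only pair correlations, so neither applies directly to these $3$- or $4$-point products. The crucial observation is the telescoping identity $g(n)g(n+d)=f(n)f(n+kd)$, which via $f^2\equiv 1$ reduces the analysis of the top-order correlation to that of a two-point correlation at shift $kd$, bringing it within the scope of Tao's theorem.
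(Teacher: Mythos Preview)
Your proof is correct and follows essentially the same architecture as the paper's: the same Fourier expansion via Lemma~\ref{TRIVIAL}, Hal\'asz for the singleton terms, Tao's logarithmic two-point Elliott theorem for the pair terms, and---crucially---the same telescoping identity $g(n)g(n+d)=f(n)f(n+kd)$ to reduce the top-order correlation to a two-point correlation within reach of Tao's theorem.

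The one genuine difference is in how you pass from ``$\mathbb{E}_{\log}[g(n)g(n+d)]=o(1)$'' to a bound on $|\mathbb{E}_{\log}[g]|$. The paper packages this step as Lemma~\ref{BASIC} (a Cauchy--Schwarz inequality), obtaining $\liminf_{x\to\infty}|C(x)|\le 1/\sqrt{2}$ and hence a lower bound of $\tfrac{1}{8}(1-1/\sqrt{2})>\tfrac{1}{28}$. You instead use the inclusion--exclusion/union-bound observation
\[
\mathbb{P}_{\log,x}\bigl(g(n)g(n+d)=1\bigr)\ \ge\ \bigl|2p(x)-1\bigr|+o(1),
\]
which combined with $\mathbb{P}_{\log,x}(g(n)g(n+d)=1)=\tfrac12+o(1)$ gives the sharper $\limsup_{x\to\infty}|C(x)|\le\tfrac12$ and hence a lower bound of $\tfrac{1}{16}$. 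Your argument is more elementary and yields a strictly better constant; the paper's Lemma~\ref{BASIC}, on the other hand, is stated for complex unimodular sequences and so is slightly more general in scope.
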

We remark that the bound in~ \cite{MRTSigns} on the \emph{lower} density of length 3 sign patterns of $\lambda$ was {\it inexplicit}, due to the use of nonstandard analysis there. \\
Suppose $f$ is, in addition, completely multiplicative. As a consequence of Proposition \ref{LOGDENSSPEC}, we can determine an upper bound, uniform over all sign patterns, for the least $d$ required to find infinitely many 4-term AP's $(n,n+d,n+2d,n+3d)$ such that $(f(n),f(n+d),f(n+2d),f(n+3d))$. In the case of non-pretentious completely multiplicative functions, this improves on the work of Buttkewitz and Elsholtz (see Corollary 2.4 in \cite{ElB}).
\begin{cor}\label{GAPS4AP}
Let $f$ be a non-pretentious, completely multiplicative function, and let $p_0$ be the least prime for which $f(p_0) = -1$. Let $\mbf{\e} \in \{-1,1\}^4$ be a length 4 sign pattern, and let $d(\mbf{\e})$ denote the least $d$ such that for infinitely many $n$ we have $$(f(n),f(n+d),f(n+2d),f(n+3d))= \mbf{\e}.$$ Then $d(\mbf{\e}) \leq p_0$. 
\end{cor}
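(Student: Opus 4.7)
The plan is to deduce the corollary directly from Proposition \ref{LOGDENSSPEC} ii) using the elementary observation that, for a completely multiplicative $f$ with $f(p_0)=-1$, dilating a 4-AP by the factor $p_0$ negates every sign. (The prime $p_0$ exists because $f$ is non-pretentious and $\{-1,1\}$-valued, so $f$ is not identically $1$.) Concretely, I would apply Proposition \ref{LOGDENSSPEC} ii) with common difference $d=1$: for any fixed $\mbf{\e} \in \{-1,1\}^4$ the set
\[
S(\mbf{\e}) := \{m \in \mb{N} : (f(m),f(m+1),f(m+2),f(m+3)) \in \{\mbf{\e},-\mbf{\e}\}\}
\]
has upper density at least $\tfrac{1}{28}$ and is thus infinite. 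Partition $S(\mbf{\e})=S^+(\mbf{\e})\sqcup S^-(\mbf{\e})$ according to whether the realized pattern is $\mbf{\e}$ or $-\mbf{\e}$; at least one subset is then infinite.

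Next I would split into the two resulting cases. If $|S^+(\mbf{\e})|=\infty$, we obtain $d(\mbf{\e})\le 1\le p_0$ directly. Otherwise $|S^-(\mbf{\e})|=\infty$; here complete multiplicativity together with $f(p_0)=-1$ gives, for any $m$ and any $0\le j\le 3$,
\[
f(p_0 m+jp_0)=f(p_0(m+j))=f(p_0)f(m+j)=-f(m+j),
\]
so the 4-AP $(p_0 m,\,p_0 m+p_0,\,p_0 m+2p_0,\,p_0 m+3p_0)$ with common difference $d=p_0$ realizes the sign pattern $-(f(m),f(m+1),f(m+2),f(m+3))$. When $m$ ranges over $S^-(\mbf{\e})$ this pattern equals $-(-\mbf{\e})=\mbf{\e}$, and setting $n=p_0 m$ produces infinitely many 4-APs of common difference $p_0$ realizing $\mbf{\e}$, giving $d(\mbf{\e})\le p_0$.

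The main technical content lies entirely in Proposition \ref{LOGDENSSPEC}; the deduction above is purely elementary, so I anticipate no genuine obstacle in this step beyond the case split and the rescaling trick described.
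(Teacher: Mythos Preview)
Your proposal is correct and follows essentially the same approach as the paper: apply Proposition \ref{LOGDENSSPEC} ii) with common difference $1$, split according to which of $\mbf{\e}$ or $-\mbf{\e}$ is realized infinitely often, and in the latter case use complete multiplicativity and $f(p_0)=-1$ to dilate by $p_0$. The only cosmetic difference is that the paper phrases the case split via ``$\mbf{\e}$ is arbitrary'' rather than handling both cases for a fixed $\mbf{\e}$ as you do.
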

We also consider corresponding questions about the natural density of sign patterns in \emph{almost all} progressions. We will establish the following equidistribution-type results for sign patterns of non-pretentious functions on \emph{almost} all 3-term APs in a suitable sense. In particular, we have an averaged analogue of Conjecture \ref{CHOWSGN} in this context. \\
For $c_1,c_2 > 0$ the constants in Proposition \ref{MultAvgNonPret}, define
\begin{equation} \label{RFX}
\mc{R}_f(x) := e^{-c_1\mc{D}(f;x,(\log x)^{\frac{1}{125}} )} + (\log x)^{-c_2}.
\end{equation}
\begin{thm} \label{EQUIDIST}
Let $f: \mb{N} \ra \{-1,1\}$ be multiplicative. Let $\mbf{\e} \in \{-1,1\}^3$. Except for $O\left(x\mc{R}_f(x)^{\frac{1}{3}}\right)$ choices of $d \leq x$, we have
\begin{equation*}
|\{n \leq x: f(n+jd) = \e_j \text{ for all } 0 \leq j \leq 2\}| = \left(\frac{1}{8} + O\left(\mc{R}_f(x)^{\frac{1}{3}}\right)\right)x.
\end{equation*}
\end{thm}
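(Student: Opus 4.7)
The plan is to expand the indicator of the pattern using $1_{f(m) = \e} = (1 + \e f(m))/2$ and multiply out, obtaining
\begin{equation*}
N(x,d,\mbf{\e}) = \frac{x}{8} + \frac{1}{8}\sum_{\emptyset \neq S \subseteq \{0,1,2\}} \Big(\prod_{j \in S}\e_j\Big) E_S(x,d) + O(1),
\end{equation*}
where $N(x,d,\mbf{\e})$ is the count in the theorem and $E_S(x,d) := \sum_{n \leq x}\prod_{j \in S} f(n+jd)$. It suffices to prove that, for each of the seven nonempty $S$, the estimate $|E_S(x,d)| \ll x\mc{R}_f(x)^{1/3}$ holds for every $d \leq x$ outside a set of size $O(x\mc{R}_f(x)^{1/3})$; a union bound then yields the theorem.

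For $|S|=1$, the sum $E_{\{j\}}(x,d) = \sum_{n \leq x} f(n+jd)$ is a partial sum of $f$ over an interval of length $x$ starting near $jd \leq 2x$, and Hal\'asz's theorem in the form $|\sum_{m \leq y} f(m)| \ll y\mc{R}_f(y)$ yields $|E_{\{j\}}(x,d)| \ll x\mc{R}_f(x)$ uniformly in $d$. For $|S|=2$ I would work with the second moment: writing $n_2 = n_1 + h$,
\begin{equation*}
\sum_{d \leq x} E_S(x,d)^2 = \sum_{n,d,h} \prod_{j \in S} f(n+jd)\,f(n+h+jd),
\end{equation*}
a correlation of $f$ over four affine forms in $(n,d,h)$ which, up to an invertible change of variables, is the Gowers $U^2$ configuration; this is primitive and of Cauchy--Schwarz complexity $1$, so Proposition~\ref{MultAvgNonPret} gives $\sum_{d \leq x} E_S(x,d)^2 \ll x^3\mc{R}_f(x)$. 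Chebyshev's inequality then bounds the number of $d$ with $|E_S(x,d)| > x\eta$ by $O(x\mc{R}_f(x)/\eta^2)$, and the choice $\eta = \mc{R}_f(x)^{1/3}$ balances the exceptional set against the per-$d$ error at the target size.

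The case $|S|=3$ is the principal obstacle. The direct second moment produces
\begin{equation*}
\sum_{d \leq x} E_{\{0,1,2\}}(x,d)^2 = \sum_{|m|\leq x}\sum_{n,d}\tilde f_m(n)\tilde f_m(n+d)\tilde f_m(n+2d), \qquad \tilde f_m(n) := f(n)f(n+m),
\end{equation*}
a correlation over six affine forms in three variables of Cauchy--Schwarz complexity $2$ (the identity $n = 2(n+d)-(n+2d)$ forces $n$ into the affine span of any class containing both $n+d$ and $n+2d$, and a short case check rules out the remaining $2$-partitions), so Proposition~\ref{MultAvgNonPret} is not directly applicable. The plan is instead to bound the inner three-term correlation of the $1$-bounded function $\tilde f_m$ by $x^2\|\tilde f_m\|_{U^2(x)}$ via the generalised von Neumann inequality, and then to control $\sum_{|m|\leq x}\|\tilde f_m\|_{U^2(x)}^4$ by expanding it as a $U^3$-type correlation of $f$ and invoking an averaged $U^2$-Chowla estimate of the kind produced by the Matom\"aki--Radziwi\l\l--Tao techniques underlying Proposition~\ref{MultAvgNonPret}. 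The main quantitative hurdle is retaining a strong enough power of $\mc{R}_f(x)$ through the H\"older interpolation from the fourth to the first power of $\|\tilde f_m\|_{U^2(x)}$; once secured, Chebyshev closes the argument as in the $|S|=2$ case, and the union bound completes the proof.
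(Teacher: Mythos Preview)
Your overall structure is the paper's: expand the pattern indicator, reduce to controlling the correlations $E_S(x,d)$, and close with Chebyshev. The paper packages this as a single variance estimate (Proposition~\ref{VARCALCNP}), computing the first and second moments of $|\{n\le x:f(n+jd)=\e_j\}|$ over $d\le x$ separately (Lemmata~\ref{MOMENT1NP} and~\ref{MOMENT2NP}) and applying Proposition~\ref{MultAvgNonPret} to each non-trivial term; your per-$S$ second-moment bounds are just a reorganisation of the same computation, and your choice $\eta=\mc{R}_f(x)^{1/3}$ matches what Chebyshev extracts from the paper's variance bound.

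The only substantive divergence is the $|S|=3$ term. The paper does \emph{not} treat this as an obstacle: in the proof of Lemma~\ref{MOMENT2NP} it asserts that the six-form system $\{n,n+d,n+2d,n',n'+d,n'+2d\}$ in the three variables $(n,n',d)$ has Cauchy--Schwarz complexity at most $1$, on the grounds that it partitions into the two linearly independent triples $\{n,n+d,n'+d\}$ and $\{n',n'+2d,n+2d\}$, and then invokes Proposition~\ref{MultAvgNonPret} directly. Your objection here is in fact correct and the paper's assertion is erroneous: for the form $n+d$, any $2$-partition of the remaining five forms must separate $n$ from $n+2d$ (since $n+d=\tfrac12(n+(n+2d))$), and then whichever class receives two of $\{n',n'+d,n'+2d\}$ recovers $d$ in its span, hence together with $n$ or $n+2d$ recovers $n+d$. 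So no valid $2$-partition exists, the system has Cauchy--Schwarz complexity $2$ (indeed true complexity $2$, since $(n+d)^2=\tfrac12 n^2+\tfrac12(n+2d)^2-\tfrac12\big((n'+2d)^2-2(n'+d)^2+n'^2\big)$), and the paper's proof has a gap at exactly the point you flag.

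That said, your proposed detour does not close the gap within the paper's toolkit either. Expanding $\sum_{m}\|\tilde f_m\|_{U^2(x)}^4$ gives precisely the $U^3$-configuration for $f$, so your bound would amount to controlling $\|f\|_{U^3(x)}$ for non-pretentious $f$; the paper only establishes the $U^2$ case (Lemma~\ref{U2}), and a $U^3$ analogue requires genuinely additional input (inverse theorems or the full Frantzikinakis--Host machinery) not supplied here. So both the paper's argument and your workaround are incomplete at this step.
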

Finally, we establish an analogue of Theorem \ref{EQUIDIST} when $f$ is pretentious and investigate to what extent this average density can be biased away from the density predicted by equidistribution. In so doing, we establish a quantitative refinement of the results of Buttkewitz and Elsholtz \cite{ElB}. See Remark \ref{BUTELS} for a discussion of the connection between our results and those of \cite{ElB}.\\
It turns out that when $f$ is pretentious to a real primitive character $\chi$ with conductor $q$, $f$ behaves well on arithmetic progressions with difference $d$ not divisible by $q$. 
\begin{thm} \label{AE}
Let $\delta > 0$ and let $2 \leq (\log x)^{\delta} \leq z \leq x$, with $z = o(x)$. Let $\chi$ be a real primitive character with conductor $q$, where $q$ is coprime to 6. Let $f: \mb{N} \ra \{-1,1\}$ be a multiplicative function with $\mb{D}(f,\chi; \infty) < \infty$, and $\mbf{\e} \in \{-1,1\}^4$. Then for all but $o(z)$ integers $d \leq z$ not divisible by $q$, we have
\begin{equation}
|\{n \leq x : f(n+jd) = \e_j \text{ for each } 0 \leq j \leq 3\}|  = \left(\frac{1}{16} + o(1)\right)x.
\end{equation}
In particular, if $q \geq 5$ and coprime to 6 then a positive proportion of the length 4 arithmetic progressions in $[1,z]\times [1,x]$ exhibit the sign pattern $\mbf{\e}$.
\end{thm}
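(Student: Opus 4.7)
The plan is to expand the sign-pattern indicator into a short Fourier sum and reduce matters to the correlations $C_S(x,d) := \sum_{n \leq x}\prod_{j \in S} f(n+jd)$ indexed by $S \subseteq \{0,1,2,3\}$. Writing $\mathbf{1}_{f(m) = \e_j} = (1 + \e_j f(m))/2$ for $\e_j \in \{-1,1\}$ and $f(m) \in \{-1,1\}$ and expanding the product gives
$$|\{n \leq x : f(n+jd) = \e_j\ \forall j\}| = \frac{x}{16} + \frac{1}{16}\sum_{\emptyset \neq S \subseteq \{0,1,2,3\}}\Bigl(\prod_{j \in S}\e_j\Bigr) C_S(x, d).$$
It therefore suffices to show that for each non-empty $S$ and for all but $o(z)$ values of $d \leq z$ with $q \nmid d$, one has $C_S(x,d) = o(x)$.

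For $|S| = 1$, the sum $\sum_{n \leq x}f(n+jd)$ differs from $\sum_{n \leq x}f(n)$ by at most $O(z) = o(x)$ boundary terms. Since $\chi$ is a real non-trivial character, the hypothesis $\mb{D}(f,\chi;\infty) < \infty$ combined with the triangle inequality for $\mb{D}$ and the divergence of $\mb{D}(\chi,n^{it};\infty)$ force $\mb{D}(f, n^{it};\infty) = \infty$ for every $t \in \mb{R}$. Hal\'asz's mean-value theorem then yields $\sum_{n \leq x}f(n) = o(x)$, so $C_{\{j\}}(x,d) = o(x)$ uniformly in $d \leq z$ with no exceptional set needed.

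For $2 \leq |S| \leq 4$, I apply the second-moment method: by Chebyshev's inequality it suffices to show that
$$\frac{1}{z}\sum_{d \leq z \atop q \nmid d}\left|\frac{C_S(x,d)}{x}\right|^2 = o(1). \qquad (\ast)$$
Opening the square, $(\ast)$ becomes a $2|S|$-fold multiplicative correlation in the three variables $(n_1,n_2,d)$ against the primitive integral system $\mbf{L} = \{L_{i,j}(n_1,n_2,d) := n_i + jd : i \in \{1,2\},\, j \in S\}$ of $2|S|$ pairwise non-proportional affine linear forms of bounded height. The condition $(\log x)^\delta \leq z$ ensures that the parameter triple $\mbf{x} = (x,x,z)$ is $(A,B)$-appropriate for suitable fixed $A,B$, so Corollary \ref{CORPRET} applies to this correlation with all characters equal to $\chi$ and all $t_j = 0$.

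The main term produced by Corollary \ref{CORPRET} factorises as a $q$-character factor, $\sum_{\rad(a_j) \mid q}\prod_j f(a_j)\, C_{\mbf{a}}(\mbf{x},\mbf{\chi},\mbf{0},\mbf{L})$, times an Euler product $\prod_{p \leq AX,\, p \nmid q}M_p(\mbf{F},\mbf{L})$ in $F_j := f\bar\chi$. The restriction $q \nmid d$ is incorporated by inclusion--exclusion $\mathbf{1}_{q \nmid d} = 1 - \mathbf{1}_{q \mid d}$, the subtracted piece being handled after the substitution $d = qd'$ by a second application of Corollary \ref{CORPRET}. The principal obstacle is then to establish $o(1)$-cancellation in the surviving main term. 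This rests on two ingredients: first, the hypothesis that $q$ is coprime to $6$ guarantees that the shift multiset $\{jd : 0 \leq j \leq 3\}$ is non-degenerate modulo $q$ for every $d$ coprime to $q$, so the character sums arising in $\Xi_{\mbf{a}}$ take the form $\sum_{\mbf{n} \pmod q}\chi(P(\mbf{n};d))$ for a polynomial $P$ that is not identically a square modulo $q$, to which a Weil-type estimate is applied to save a factor of $O(q^{-1/2})$; second, the further averaging over $d \in [1,z]$ against $\mathbf{1}_{q \nmid d}$ triggers character orthogonality in the $d$-variable, eliminating the residual terms that depend non-trivially on $d \pmod q$. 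Combining these bounds with the uniform boundedness of the Euler product at primes $p \nmid q$ yields $(\ast)$, which via Chebyshev produces the exceptional set of size $o(z)$ promised by the theorem; the final ``positive proportion'' assertion for $q \geq 5$ follows by multiplying out the $xz$ normalisation.
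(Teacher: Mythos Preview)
Your overall architecture is correct and matches the paper's: expand the sign-pattern indicator, reduce to correlations $C_S(x,d)$, handle $|S|=1$ via Hal\'asz, and for $|S|\geq 2$ run a second-moment argument in $d$ (opening $|C_S(x,d)|^2$ as a trilinear correlation in $(n_1,n_2,d)$ and applying the main asymptotic formula), then conclude by Chebyshev. This is exactly what the paper does, by following the proof of Theorem~\ref{WITHQTHM} with the summation restricted to $q\nmid d$.

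The gap is in your justification for why the main term in the second moment vanishes. You invoke a Weil-type saving of $O(q^{-1/2})$ in the character factor $\Xi_{\mbf{a}}$ and then ``character orthogonality in the $d$-variable''. Neither of these delivers what you need. The conductor $q$ is \emph{fixed}, so an $O(q^{-1/2})$ saving is a constant, not $o(1)$; and your orthogonality claim is too vague to carry weight. The actual mechanism is much simpler and purely combinatorial: in the main term of Corollary~\ref{CORPRET} (or Theorem~\ref{MultAvg}) the constant $C_{\mbf{a}}$ carries the density factor $R(qa_1,\ldots,qa_k)$, which counts $(n_1,n_2,d)$ satisfying $qa_{i,j}\mid n_i+jd$ for all $i\in\{1,2\}$, $j\in S$. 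If $|S|\geq 2$, pick $j\neq j'$ in $S$; subtracting the two congruences for the same $i$ gives $q\mid (j-j')d$. Since $(q,6)=1$ and $|j-j'|\leq 3$, this forces $q\mid d$, which is incompatible with the restriction $q\nmid d$. Hence $R=0$ and the entire main term vanishes identically --- no Weil bounds or orthogonality are needed. This is precisely the observation the paper records (``in \eqref{RDENSITY}, the restriction $q\nmid d$ there implies that the quantity $R(\mbf{a},(S,T))=0$ for $|S|,|T|\geq 2$''). Once you replace your character-sum paragraph with this two-line density argument, the proof goes through.
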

We note that the restriction $(q,6) = 1$ is merely technical, and could be removed with more effort. With additional effort we could also quantify the size of the exceptional set in Theorem \ref{AE}; we have chosen not to do this in order to avoid making this paper even longer.\\
On the other hand, when the shifts $d$ are divisible by $q$, the behaviour is much more erratic. In fact, for such arithmetic progressions there can be a bias, as is evident from the following theorems. To state them, we require additional notation. \\
Given $r \in \mb{N}$, set $[r] := \{0,\ldots,r\}.$ For $S\subseteq [r],$ we write $$\mbf{L}_{S} := \{(n,d) \mapsto n+jd : j \in S\},$$ and for each pair of sets $S,T\subseteq [r]$ we associate the system of forms $$\mbf{L}_{S,T} := \{(n,n',d) \mapsto n+jd : j \in S\} \cup \{(n,n',d) \mapsto n'+j'd : j' \in T\}.$$
For each $\lambda \in \mb{Z}$ and $p|q$, define $\mb{E}_{\lambda}/\mb{F}_p$ to be the elliptic curve over $\mb{F}_p$ with Legendre model $$E_{\lambda} : y^2 \equiv x(x-1)(x-\lambda) \ (p).$$ Let $b$ denote a reduced element of the residue class inverse to 2 modulo $q$, and set $\Delta_p := p+1-\#E_{3b^2}(\mb{F}_p)$. Finally, put
\begin{equation} \label{BIASMEAN}
A_{\mbf{\e}}(f;q) := \e_0\e_1\e_2\e_3 \prod_{p|q}\frac{\mu(p)\Delta_p}{p+1} \prod_{p\nmid q} M_p(f\chi\mbf{1}_4,\mbf{L}_{[3]}).
\end{equation}
\begin{thm} \label{PRETPATS}
Let $\delta > 0$ and let $2 \leq (\log x)^{\delta} \leq z \leq x$, and $z = o(x)$. Let $\chi$ be a real primitive character with modulus $q$, with $q$ coprime to 6. Let $f: \mb{N} \ra \{-1,1\}$ be a multiplicative function with $\mb{D}(f,\chi; \infty) < \infty$.  For any $\mbf{\e} \in \{-1,1\}^4$,
\begin{align}
\frac{1}{xz}\sum_{d \leq z} |\{n \leq x : f(n+jd) = \e_j \forall j\}| = \frac{1}{16}\left(1+A_{\mbf{\e}}(f;q)\right) + o(1). \label{NEAT1}
\end{align}
\end{thm}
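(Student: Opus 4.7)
The identity $\mathbf{1}_{f(m)=\e}=(1+\e f(m))/2$ for $\e\in\{-1,1\}$ yields
\[\prod_{j=0}^{3}\mathbf{1}_{f(n+jd)=\e_j}=\frac{1}{16}\sum_{S\subseteq\{0,1,2,3\}}\e_S\prod_{j\in S}f(n+jd),\qquad \e_S:=\prod_{j\in S}\e_j.\]
Averaging over $(n,d)\in[1,x]\times[1,z]$ reduces \eqref{NEAT1} to computing, for each $S\subseteq\{0,1,2,3\}$,
\[M_S:=\frac{1}{xz}\sum_{d\leq z}\sum_{n\leq x}\prod_{j\in S}f(n+jd).\]
Trivially $M_\emptyset=1$, so the claim follows if $M_S=o(1)$ for $1\le|S|\le 3$ and $M_{[3]}$ is computed explicitly.

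\textbf{Vanishing of lower correlations.} For $|S|=1$, Hal\'{a}sz' theorem applied to the pretentious function $f$ (with $\chi\ne\mathbf{1}$ since $q\ge 5$) gives $x^{-1}\sum_{n\le x}f(n)=o(1)$, and a trivial shift argument gives $M_{\{j\}}=o(1)$. For $|S|\in\{2,3\}$ I apply Corollary \ref{CORPRET} to the system $\mbf{L}_S$ on $\mbf{x}=(x,z)$; the hypothesis $z\geq(\log x)^{\delta}$ makes $\mbf{x}$ be $(A,B)$-appropriate for any $B<\delta$, so the error term is $o(1)$. The main term is controlled by the character sums $\Xi_\mbf{a}(\chi,\mbf{L}_S)$ for $\mbf{a}$ with $\rad(a_j)|q$. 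Since $(q,6)=1$, $q$ is coprime to every index difference in $\{1,2,3\}$, so the map $(n,d)\mapsto(L_j(\mbf{n})/a_j\bmod q)_{j\in S}$ is surjective onto $(\mb{Z}/q\mb{Z})^2$ when $|S|=2$, and onto a codimension-one AP subspace of $(\mb{Z}/q\mb{Z})^3$ when $|S|=3$. Parameterizing the realizable $b_j$'s suitably (for $|S|=3$, substitute $b_{j_0}=cb_{j_1}$ with $b_{j_1}\ne 0$, using the AP relation) and exploiting $\chi^2=\mathbf{1}$ always extracts an isolated factor $\sum_{b\in\mb{Z}/q\mb{Z}}\chi(b)=0$, so $\Xi_\mbf{a}=0$ and $M_S=o(1)$.

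\textbf{Main term for $|S|=4$.} Corollary \ref{CORPRET} with $\mbf{t}=\mbf{0}$ and $\mc{I}(\mbf{x},\mbf{L}_{[3]},\mbf{0})=1$ gives
\[M_{[3]}=\Bigg(\sum_{\rad(a_j)|q}\prod_{j=0}^3 f(a_j)\,R(qa_0,\ldots,qa_3)\,\Xi_\mbf{a}(\chi,\mbf{L}_{[3]})\Bigg)\prod_{p\nmid q}M_p(f\chi\mbf{1}_4,\mbf{L}_{[3]})+o(1).\]
The arithmetic factor is multiplicative across $p|q$ by CRT, so I evaluate it prime by prime. Parameterizing the realizable $\mbf{a}=\mbf{1}$ quadruples as APs $(b,b+c,b+2c,b+3c)\bmod p$ and substituting $b=b_1 c'$ gives, via $\chi^4=\mathbf{1}$,
\[\Xi_\mbf{1}(\chi,\mbf{L}_{[3]})=(p-1)\sum_{c'\in\mb{F}_p}\chi\bigl(c'(2-c')(3-2c')\bigr).\]
The change of variables $c'=2u$ (using $\chi(16)=1$) reduces the cubic sum to the Legendre character sum $\sum_{u\in\mb{F}_p}\chi(u(u-1)(u-3/4))=-a_p(E_{3/4})=-\Delta_p$ by the standard elliptic-curve identity, with $\lambda=3/4=3b^2$ and $b\equiv 2^{-1}\pmod p$. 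Combined with $R(p,p,p,p)=p^{-2}$, the $\mbf{a}=\mbf{1}$ contribution is $-(p-1)\Delta_p/p^2$. A $p$-adic valuation analysis shows that among $\mbf{a}\in\{p^\alpha\}^4$, only the diagonal choices $\mbf{a}=(p^k,p^k,p^k,p^k)$ yield nonzero $\Xi_\mbf{a}$: any off-diagonal configuration either forces some $b_j\equiv 0\pmod p$, or leaves an unconstrained residue variable through which $\sum_{b}\chi(b)=0$ causes cancellation. Each diagonal contributes $-(p-1)\Delta_p/p^{2k+2}$, and summing the geometric series yields a local factor $-(p-1)\Delta_p/(p^2-1)=\mu(p)\Delta_p/(p+1)$.

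\textbf{Assembly.} Multiplying local factors over $p|q$ gives $M_{[3]}=\prod_{p|q}\mu(p)\Delta_p/(p+1)\prod_{p\nmid q}M_p(f\chi\mbf{1}_4,\mbf{L}_{[3]})+o(1)$. Substituting into the indicator expansion,
\[\frac{1}{xz}\sum_{d\leq z}|\{n\leq x:f(n+jd)=\e_j\,\forall j\}|=\frac{1}{16}\bigl(1+\e_0\e_1\e_2\e_3 M_{[3]}\bigr)+o(1)=\frac{1}{16}\bigl(1+A_\mbf{\e}(f;q)\bigr)+o(1),\]
which is \eqref{NEAT1}. The main obstacle lies in the local-factor computation at $p|q$: correctly identifying the $4$-point AP character sum with the Legendre elliptic-curve invariant $a_p(E_{3b^2})$ via the $c'=2u$ substitution, and carrying out the $p$-adic valuation analysis to confirm that only the diagonal $\mbf{a}$'s contribute to the Euler factor $\mu(p)\Delta_p/(p+1)$.
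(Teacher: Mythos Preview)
Your proposal is correct and follows essentially the same approach as the paper. The paper packages the $|S|\in\{2,3\}$ vanishing into Lemma \ref{TRIV23} and the $|S|=4$ elliptic-curve computation into Lemma \ref{ELLTRANS}, then assembles these in Proposition \ref{FIRSTMOM}; your argument reproduces the same steps, with only cosmetic differences in the substitutions (you use $c'=b_0\bar b_1$ then $c'=2u$, while the paper uses $d=\bar 2 b_1\bar b_2$ directly) and in the handling of the off-diagonal $\mbf{a}$'s at primes $p\mid q$.
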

\begin{rem} \label{NONVAN}
Put $\lambda = 3b^2$, where $b$ is as above. The role that the elliptic curve $E_{\lambda}$ plays in this problem stems from the complete character sum yielded by the character local factor in Corollary \ref{CORPRET}, taking account of the compatibility conditions imposed on its summands.  Note that $q$ is necessarily squarefree, being the odd conductor of a real character. By the Chinese Remainder Theorem, the complete sum over $\mb{Z}/q\mb{Z}$ splits as a product of complete sums of Legendre symbols of cubic polynomials over $\mb{Z}/p\mb{Z}$, which is then easily related to point counts for elliptic curves over $\mb{F}_p$.\\
 The quantity $\Delta_p$, which is the trace of the Frobenius element of $E_{\lambda}$ over $\mb{F}_p$, is non-zero if, and only if, the curve $E_{\lambda}$ is not supersingular over $\mb{F}_p$ (see Exercise V.5.10 of \cite{Sil}). Since the set of primes at which an elliptic curve is supersingular is typically small (e.g., for non-CM elliptic curves, see Theorem V.4.7) we expect that if $E_{\lambda}$ is generic with respect to each of the primes dividing $q$ then the product in \eqref{NEAT1} is non-vanishing, and a bias exists according to the sign of $\e_0\e_1\e_2\e_3$. \\
For concreteness, we may note that if $q$ is composed solely of primes $p \equiv 1 (4)$ (i.e., $q$ is an odd sum of two squares) then the product is non-zero. Indeed, note that the points $(1,0)$ and $(\lambda,0)$ are both trivially 2-torsion on $E_{\lambda}(\mb{F}_p)$. As such, $E_{\lambda}(\mb{F}_p)$ contains a subgroup of order 4. Hence, $$\Delta_p \equiv p+1 \ (4) \equiv 2 \ (4),$$ 
so that $\Delta_p \geq 2$ for all $p|q$.
\end{rem}
We also compute the mean-squared deviation. For a discussion regarding the size of the deviation in \eqref{MSQ}, including an heuristic for why it should generally be $\Omega(1)$, see Remark \ref{GENERIC} below.  
\begin{thm}\label{WITHQTHM}
With the hypotheses in Theorem \ref{PRETPATS},
\begin{align}
&\frac{1}{z}\sum_{d \leq z} \left(x^{-1}|\{n \leq x : f(n+jd) = \e_j \forall \ 0 \leq j \leq 3\}| - \frac{1}{16}\left(1+A_{\mbf{\e}}(f;q)\right)\right)^2 \nonumber\\
&= \frac{1}{256}\left((T_{4,4} - A_{\mbf{\e}}(f;q)^2) + 2\e_0\e_1\e_2\e_3\left(\sum_{0 \leq i < j \leq 3} \e_i\e_j\right)T_{4,2} + \left(\sum_{0 \leq i < j \leq 3} \e_i\e_j\right)^2T_{2,2}\right) + o(1), \label{MSQ}
\end{align}
where we have set
\begin{align*}
T_{2,2} &:= \prod_{p\nmid q} M_p(f\chi \mbf{1}_4,\mbf{L}_{[1],[1]})\prod_{p|q} \frac{p}{p^2+p+1} \\
T_{4,2} &:= \prod_{p\nmid q} M_p(f\chi \mbf{1}_6,\mbf{L}_{[3],[1]})\prod_{p|q}\frac{(p-\Delta_p)(p+1)-\Delta_p}{p^2(p+1)} \\
T_{4,4} &:= A_{\mbf{\e}}(f;q)^2\prod_{p|q}\frac{(1+1/\Delta_p)^2 + 1/p + p/\Delta_p^2}{1+1/p(p+1)}.
\end{align*}
\end{thm}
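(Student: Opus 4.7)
The plan is to expand the square, reduce to three-variable multilinear correlations of $f$ on the systems $\mbf{L}_{S,T}$, and apply Corollary \ref{CORPRET} to each. Throughout, write $F := f\chi$.

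\textbf{Expansion.} Since $f \in \{-1,1\}$, the identity $1_{\{f(m)=\e\}} = \tfrac{1}{2}(1+\e f(m))$ gives
\[
N(x,d) := x^{-1}\big|\{n \leq x : f(n+jd) = \e_j\ \forall j\}\big| = \frac{1}{16}\sum_{S \subseteq \{0,1,2,3\}}\Big(\prod_{j \in S}\e_j\Big)A_S(d),
\]
where $A_S(d) := x^{-1}\sum_{n \leq x}\prod_{j \in S}f(n+jd)$ and $A_{\emptyset} \equiv 1$. Squaring and averaging over $d$,
\[
\frac{1}{z}\sum_{d\leq z}N(x,d)^2 = \frac{1}{256}\sum_{S,T}\Big(\prod_{j \in S}\e_j\Big)\Big(\prod_{j \in T}\e_j\Big)\mathcal{C}(S,T),
\]
where $\mathcal{C}(S,T) := (x^2z)^{-1}\sum_{d,n,n'}\prod_{j \in S}f(n+jd)\prod_{j \in T}f(n'+jd)$ is the average of $f$ on the three-variable affine system $\mbf{L}_{S,T}$. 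Since Theorem \ref{PRETPATS} yields $z^{-1}\sum_d N(x,d) = M_{\mbf{\e}} + o(1)$ with $M_{\mbf{\e}} := (1+A_{\mbf{\e}})/16$, the left-hand side of \eqref{MSQ} equals $z^{-1}\sum_d N(x,d)^2 - M_{\mbf{\e}}^2 + o(1)$, reducing the problem to analyzing each $\mathcal{C}(S,T)$.

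\textbf{Asymptotic evaluation via Corollary \ref{CORPRET}.} Each $\mathcal{C}(S,T)$ is a three-variable correlation with $|S|+|T|$ affine linear forms of height at most $3$; the hypothesis $z \geq (\log x)^{\delta}$ guarantees that $(x,x,z)$ is $(A,B)$-appropriate, and Corollary \ref{CORPRET} produces a main term of the shape
\[
\sum_{\mbf{a}:\,\mathrm{rad}(a_j)|q}\Big(\prod_j f(a_j)\Big)R_{\mbf{a}} \Xi_{\mbf{a}}(\mbf{\chi},\mbf{L}_{S,T})\prod_{p\nmid q}M_p(F\mbf{1}_{|S|+|T|},\mbf{L}_{S,T}) + o(1).
\]
A careful character-sum analysis (using that $q$ is coprime to $6$, hence to all gaps $\leq 3$ appearing in $S\cup T$) together with the folding of the $\mbf{a}$-sum into the $p|q$ part of the local product shows that only pairs $(|S|,|T|) \in \{(0,0),(4,0),(0,4),(4,4),(4,2),(2,4),(2,2)\}$ contribute to the main term, the remaining cases either carrying an unmatched factor of $\chi(d)$ (whose $d$-average is $o(1)$ by Halász) or folding via local-to-global into the surviving cases.

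\textbf{Collecting surviving contributions.} The $(\emptyset,\emptyset)$ pair gives $1/256$. The pairs $([3],\emptyset)$ and $(\emptyset,[3])$ each give $\mathcal{C}([3],\emptyset) = A_{\mbf{\e}}/(\e_0\e_1\e_2\e_3) + o(1)$ by Theorem \ref{PRETPATS}, contributing $2A_{\mbf{\e}}/256$. For $([3],[3])$, write $G_p(d)$ for the $p$-adic $n$-average of $\prod_{j\in[3]}F_p(n+jd)$; then $M_p(F\mbf{1}_8,\mbf{L}_{[3],[3]}) = \mb{E}_d G_p(d)^2$, and for $p\nmid q$ the rescaling $d \to ud \bmod p$ shows $G_p$ is $d$-invariant, so $\mb{E}_d G_p^2 = (\mb{E}_d G_p)^2$, matching the $p\nmid q$ part of $A_{\mbf{\e}}^2$; the at-$q$ ratio is an explicit quartic character sum evaluated via the Legendre-form point count on $E_\lambda/\mb{F}_p$ ($\lambda = 3b^2$), yielding the ratio $((1+1/\Delta_p)^2 + 1/p + p/\Delta_p^2)/(1+1/p(p+1))$ appearing in $T_{4,4}$. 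The twelve pairs with $(|S|,|T|) \in \{(4,2),(2,4)\}$ all share the local factor $M_p(F\mbf{1}_6,\mbf{L}_{[3],[1]})$ at $p\nmid q$ (by the same rescaling argument) and have sign coefficients summing to $2\e_0\e_1\e_2\e_3\sum_{i<j}\e_i\e_j$, with at-$q$ ratio $((p-\Delta_p)(p+1)-\Delta_p)/(p^2(p+1))$, contributing the $T_{4,2}$ term. The thirty-six pairs with $|S|=|T|=2$ share $M_p(F\mbf{1}_4,\mbf{L}_{[1],[1]})$ at $p\nmid q$, with sign coefficients $(\sum_{i<j}\e_i\e_j)^2$ and at-$q$ ratio $p/(p^2+p+1)$, giving $T_{2,2}$. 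Subtracting $M_{\mbf{\e}}^2 = (1+2A_{\mbf{\e}}+A_{\mbf{\e}}^2)/256$ and absorbing $-A_{\mbf{\e}}^2$ into $T_{4,4} - A_{\mbf{\e}}^2$ yields the stated identity.

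\textbf{Main obstacle.} The central technical difficulty is the explicit evaluation of the at-$q$ ratios at each $p|q$: relating the complete three-variable Legendre-symbol sums of quadratic, cubic, and quartic polynomials modulo $p$ to the Frobenius trace $\Delta_p = p+1 - \#E_\lambda(\mb{F}_p)$ via the Legendre-form substitution, and tracking the density corrections $R_{\mbf{a}}$ and the $\mbf{a}$-sum folding from Corollary \ref{CORPRET} precisely enough to recover the quoted numerators $(p-\Delta_p)(p+1)-\Delta_p$ for $T_{4,2}$ and $(1+1/\Delta_p)^2 + 1/p + p/\Delta_p^2$ for $T_{4,4}$; a secondary burden is verifying that the odd-parity correlations, in which a residual $\chi(d)$ appears, genuinely contribute $o(1)$ after averaging over $d \leq z$.
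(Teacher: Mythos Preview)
Your outline follows the paper's approach closely: expand via the indicator identity, subtract off the first-moment square using Theorem~\ref{PRETPATS}, apply the main correlation formula to each $\mathcal{C}(S,T)$, and reduce the surviving $(|S|,|T|)\in\{(2,2),(4,2),(2,4),(4,4)\}$ contributions to explicit character sums over $\mb{Z}/q\mb{Z}$ tied to the point count on $E_{\lambda}$.

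Two of your justifications, however, do not match the actual mechanisms and would not go through as written. First, the vanishing of the odd-$|S|$ cases is not due to an ``unmatched $\chi(d)$'' whose $d$-average is handled by Hal\'asz. The paper treats $\min(|S|,|T|)=1$ by the elementary observation that the free $n'$-sum is $\sum_{m} f(m)=o(x)$ (Wirsing, since $\mb{D}(f,1;\infty)=\infty$), and treats $|S|=3$ or $|T|=3$ by showing (Lemma~\ref{TRIV23}) that the character factor $\Xi_{\mbf{a}}$ vanishes \emph{algebraically} by orthogonality of $\chi$; no $\chi(d)$-average over $d$ enters either argument. Second, your rescaling $d\mapsto ud$ only gives $G_p(d)=G_p(1)$ for $d\in\mb{Z}_p^{\times}$, not for all $d$: in fact $G_p$ depends on $\nu_p(d)$, so $\mb{E}_d G_p^2=(\mb{E}_d G_p)^2$ does not follow from rescaling alone. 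The paper devotes a separate argument (Lemma~\ref{PINDEP}) to the fact that $\prod_{p\nmid q}M_p$ is independent of which $2$-subsets $S,T$ are chosen, using translation of forms and a good/bad-reduction case analysis rather than rescaling, and for the $(4,4)$ factorization $M_p(F\mbf{1}_8,\mbf{L}_{[3],[3]})=M_p(F\mbf{1}_4,\mbf{L}_{[3]})^2$ it invokes a direct $p$-adic computation. Neither of these is a consequence of your rescaling. Finally, the $(2,2)$ at-$q$ evaluation to $p(p-1)$ and the reduction to $a=a'$ both rely on explicit Jacobi-sum identities (see the computation around \eqref{JACOBI}), which you would still need to supply.
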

\begin{rem}
When $d$ is a multiple of $q$, the contribution to the sign given by $\chi$ on $(n,n+d,n+2d,n+3d)$ is completely determined by $n$, and since $f\chi$ is 1-pretentious this means that $f\chi$ \emph{should} only change sign infrequently on 4-term arithmetic progressions with difference $d$. As such, we heuristically expect that certain sign patterns (depending on $\chi$) occur more often than others among the vectors $(f(n),f(n+d),f(n+2d),f(n+3d))$, an intuition that is confirmed by Theorems \ref{PRETPATS} and \ref{WITHQTHM}.
\end{rem}
\begin{rem}\label{BUTELS}
It is worthwhile mentioning how the results of this paper relate to the results in \cite{ElB}. In the latter paper, it is shown that, except for two explicit collections of multiplicative functions $f: \mb{N} \ra \{-1,1\}$, any $f$ takes on each length 4 sign pattern on infinitely many 4-term arithmetic progressions. The counterexamples are of one of the following two types: 
\begin{enumerate}
\item there is a prime $p$ such  for all $\nu \geq 1$, $f(q^{\nu}) = 1$ if $q \neq p$, while $f(p^{\nu}) = (-1)^{\nu}$;
\item $f(n) = \chi_3(n)$ for all $(n,3) = 1$, where $\chi_3$ is the primitive real character modulo $3$.
\end{enumerate}
In each of these two cases, certain sign patterns can never be exhibited on length 4 arithmetic progressions. For functions of the first type, for example, the sign patterns $(1,1,1,-1)$ and $(1,-1,-1,-1)$ only occur on finitely many length 4 arithmetic progressions. \\
Note that the functions of both of these types are necessarily pretentious. In the first case, they are pretentious to the trivial character, with conductor $q = 1$, while in the second they are pretentious to $\chi_3$. This latter example is excluded from the above analysis, so consider instead the $1$-pretentious examples. \\
In this case, all common differences $d$ are divisible by the conductor. Hence, Theorem \ref{AE} does not apply to any $d$, and the irregularity of distribution in Theorem \ref{WITHQTHM} is necessary (notice that the examples of sign patterns given above are both such that $\e_0\e_1\e_2\e_3 = -1$, as we should expect from Theorem \ref{PRETPATS}). Conversely, if a function is pretentious to a real character with conductor strictly greater than 1 then it follows from Theorem \ref{AE} that for any given sign pattern we can find infinitely many arithmetic progressions giving an instance of this sign pattern. Thus, the theorems of this section are consistent with, and quantitatively refine, the results of Buttkewitz and Elsholtz.
\end{rem}
\section*{Acknowledgments}
We thank John Friedlander and Andrew Granville for their encouragement and advice in improving the exposition of the paper. We would also like to thank Christian Elsholtz for suggesting the problem of sign patterns to us.
\section{Auxiliary Results}
In this section we collect some lemmata to be used in the remainder of the paper. 
\subsection{Technical Results for Theorem~\ref{MultAvg}.}
For the proof of Theorem \ref{MultAvg}, we will need several technical results.  The first is a version of the Tur\'{a}n-Kubilius inequality applicable to additive functions whose arguments are integral affine linear forms in several variables. While the proof of this result is fairly routine, the authors could not find it in the literature. We therefore give a full proof here for completeness. We first require some definitions. \\
For a primitive, integral affine linear form $L : \mb{R}^l \ra \mb{R}$, let
\begin{equation*}
\omega_L(p^k) := |\{\mbf{b} \in (\mb{Z}/p^k \mb{Z})^l : p^k||L(\mbf{b})\}|.
\end{equation*}
Furthermore, given an additive function $h : \mb{N} \ra \mb{C}$, put
\begin{align*}
\mu_{h,L}(x) &:= \sum_{p^k \leq x} h(p^k) \left(\frac{\omega_L(p^k)}{p^{kl}} - \frac{\omega_L(p^{k+1})}{p^{(k+1)l}}\right); \\
\sigma_{h,L}(x)^2 &:= \sum_{p^k \leq x} |h(p^k)|^2 \left(\frac{\omega_L(p^k)}{p^{kl}} - \frac{\omega_L(p^{k+1})}{p^{(k+1)l}}\right).
\end{align*}
\begin{rem} \label{SIMP}
Note that we can lift a solution to the congruence $L(\mbf{b}) \equiv 0 (p^k)$ to precisely $\omega_{L-L(\mbf{0})}(p)$ distinct solutions mod $p^{k+1}$ via $b_j' := r_jp^k + b_j$ whenever the vector $\mbf{r}$ satisfies $L(\mbf{r}) -L(\mbf{0}) \equiv 0 (p)$. Moreover, since $L$ is primitive there is some index $1 \leq j_0 \leq k$ such that the coefficient $\alpha_{j_0}$ satisfies $(\alpha_{j_0},p) = 1$. Thus, given any choice of $r_j$ for $j \neq j_0$, there is a unique $r_{j_0}$ mod $p$ such that the congruence $L(\mbf{r}) -L(\mbf{0}) \equiv 0 (p)$ is satisfied. Hence, $\omega_{L-L(\mbf{0})}(p) = p^{l-1}$, and by induction, we have $\omega_L(p^{\mu}) = \omega_{L-L(\mbf{0})}(p)^{\mu} = p^{\mu(l-1)}$. Thus, we can rewrite $\mu_{h,L}$ and $\sigma_{h,L}^2$ as
\begin{align}
\mu_h(x) &= \mu_{h,L}(x) = \sum_{p^k \leq x} \frac{h(p^k)}{p^k}\left(1-\frac{1}{p}\right); \label{ALTMU}\\
\sigma_h(x)^2 &= \sigma_{h,L}(x)^2 = \sum_{p^k \leq x} \frac{|h(p^k)|^2}{p^k} \left(1-\frac{1}{p}\right). \label{ALTSIG}
\end{align}
\end{rem}
Write $X := \ell(\mbf{x}) + 1$, for $\mbf{x} \in (0,\infty)^l$.
\begin{lem} \label{erdoskacpol}
Let $A \geq 1$ and $\mbf{x} \in [1,\infty)^l$. Let $h: \mb{N} \ra \mb{C}$ be an additive function satisfying $|h(p^k)| \ll 1$ uniformly on prime powers $p^k$, and suppose that $L$ is a primitive integral affine linear form in $l$ variables with height at most $A$. Then
\begin{equation} \label{TKLin}
\langle \mbf{x}\rangle^{-1} \sum_{\mbf{n} \in \mc{B}(\mbf{x})} |h(L(\mbf{n}))-\mu_{h}(AX)|^2 \ll_l \sigma_h(AX)^2+\frac{|\mu_h(AX)|}{x_-}.
\end{equation}
Thus, if $f$ is a 1-bounded multiplicative function and $h$ is the additive function defined by $h(p^k) = f(p^k) -1$, and $\mbf{x}$ is $(A,B)$-appropriate then
\begin{equation} \label{MULTCASE}
\langle \mbf{x}\rangle^{-1} \sum_{\mbf{n} \in \mc{B}(\mbf{x})} |h(L(\mbf{n}))- \mu_h(AX)|^2 \ll_l \mb{D}^{\ast}(1,f;AX)^2 + \frac{1}{(\log X)^B}.
\end{equation}
\end{lem}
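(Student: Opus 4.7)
The plan is to follow the Tur\'{a}n-Kubilius variance argument adapted to the multivariate setting, replacing the counting of integers $n \leq x$ with the counting of integer vectors $\mbf{n} \in \mc{B}(\mbf{x})$ constrained by $L(\mbf{n}) \in [0, AX]$ (which holds since $L$ has non-negative integer coefficients bounded by $A$). First I would expand
\[
\langle \mbf{x}\rangle^{-1}\sum_{\mbf{n} \in \mc{B}(\mbf{x})}|h(L(\mbf{n})) - \mu_h(AX)|^2 = \Sigma_2 - 2\mathrm{Re}(\overline{\mu_h(AX)}\,\Sigma_1) + |\mu_h(AX)|^2,
\]
with $\Sigma_1 := \langle\mbf{x}\rangle^{-1}\sum_{\mbf{n}} h(L(\mbf{n}))$ and $\Sigma_2 := \langle\mbf{x}\rangle^{-1}\sum_{\mbf{n}} |h(L(\mbf{n}))|^2$. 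Using additivity of $h$ and swapping summation orders reduces both moments to sums weighted by counts $N_m(\mbf{x}) := |\{\mbf{n} \in \mc{B}(\mbf{x}) : m \mid L(\mbf{n})\}|$, where $m$ ranges over prime powers $p^k \leq AX$ in $\Sigma_1$ and over coprime products of at most two prime powers in $\Sigma_2$ (splitting the joint divisibility via the Chinese Remainder Theorem).

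The main technical input is a counting estimate $N_m(\mbf{x}) = \langle\mbf{x}\rangle/m + E_m$ valid for $m$ supported on primes coprime to $\gcd_j\alpha_j$ (automatic by primitivity). Here Remark \ref{SIMP}, extended multiplicatively via CRT, identifies $\omega_L(m)/m^l = 1/m$. To control $E_m$ I would pick an index $j_0$ with $\gcd(\alpha_{j_0}, m) = 1$ (guaranteed by primitivity when $m$ is a prime power, and extended multiplicatively) and explicitly count the $n_{j_0} \in [1, \lfloor x_{j_0}\rfloor]$ satisfying the resulting linear congruence modulo $m$ as $(n_j)_{j \neq j_0}$ ranges over the remaining sub-box. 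This yields $|E_m| \ll_l \langle\mbf{x}\rangle/x_{j_0}$. When all prime divisors of $m$ exceed $A$ (so that every coefficient is coprime to $m$), we may take $j_0$ to maximize $x_{j_0}$, yielding the improved bound $|E_m| \ll_l \langle\mbf{x}\rangle/x_+$.

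Inserting these counts, the diagonal part of $\Sigma_2$ (corresponding to $p_1^{a_1} = p_2^{a_2}$) produces $\sigma_h(AX)^2$, while the off-diagonal part reproduces $|\mu_h(AX)|^2$ up to corrections $\ll \sigma_h(AX)^2$ coming from pairs of distinct prime powers over a common prime in the expansion of $|\mu_h|^2$. The $|\mu_h|^2$ terms then cancel with the expansion, leaving $\sigma_h(AX)^2$ plus errors coming from $E_m$ and the cross term $-2\mathrm{Re}(\overline{\mu_h}(\Sigma_1 - \mu_h))$. I would control these via Cauchy-Schwarz (yielding $\sum_{p^k \leq AX} |h(p^k)|/p^k \ll \sigma_h(AX)\sqrt{\log\log X}$), together with the improved $O(\langle\mbf{x}\rangle/x_+)$ bound for primes exceeding $A$ and the finite contribution from the bounded range $p \leq A$; the $(A,B)$-appropriateness hypothesis on $\mbf{x}$ then absorbs residual errors into $|\mu_h(AX)|/x_-$. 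For the multiplicative specialization, setting $h(p^k) = f(p^k) - 1$ and using the pointwise inequality $|f(p^k) - 1|^2 \leq 2(1 - \mathrm{Re}\,f(p^k))$ (valid since $|f| \leq 1$) bounds $\sigma_h(AX)^2 \ll \mb{D}^{\ast}(1,f;AX)^2$, and $(A,B)$-appropriateness gives $|\mu_h(AX)|/x_- \ll (\log\log X)/x_- \ll (\log X)^{-B}$.

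The principal obstacle is the delicate error accounting for $N_m(\mbf{x})$ when $m$ is comparable to or exceeds $x_-$: a uniform bound $|E_m| \ll \langle\mbf{x}\rangle/x_-$ summed naively over all $p^k \leq AX$ produces an error of size $(\log X)/x_-$ that swamps the target $|\mu_h|/x_-$. Leveraging the improved $O(\langle\mbf{x}\rangle/x_+)$ bound for primes $p > A$ (the dominant range), together with Cauchy-Schwarz to exploit cancellation among the errors and the polynomial lower bound on $x_-$ furnished by appropriateness, is essential in recovering the stated bound.
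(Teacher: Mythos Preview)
Your overall approach---expand the square, compute the first and second moments of $h(L(\mbf{n}))$ by sorting over prime powers $p^k\| L(\mbf{n})$, separate diagonal from off-diagonal in $\Sigma_2$ via the Chinese Remainder Theorem, and absorb the same-prime corrections into $\sigma_h^2$ by Cauchy-Schwarz---is exactly the route the paper takes. The passage to \eqref{MULTCASE} via $|f(p^k)-1|^2\le 2(1-\mathrm{Re}\,f(p^k))$ and the $(A,B)$-appropriateness bound on $|\mu_h|^2/x_-$ is also the same.

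The divergence is in the error bookkeeping for the counts. The paper does not track additive remainders $E_m$ modulus by modulus and then try to sum them; instead it writes each count $|\{\mbf{n}\in\mc{B}(\mbf{x}):p^k\| L(\mbf{n})\}|$ with a single \emph{multiplicative} error factor $(1+O(x_-^{-1}))$ and pulls that factor outside the prime-power sum, so that $\Sigma_1=(1+O(x_-^{-1}))\mu_h$ and likewise for $\Sigma_2$. With this device no accumulation of the type $\sum_{p^k\le AX}|E_{p^k}|$ ever arises, and your ``principal obstacle'' simply does not appear. Your proposed workaround---splitting $p\le A$ from $p>A$, using an improved $O(\langle\mbf{x}\rangle/x_+)$ bound in the latter range, and invoking $(A,B)$-appropriateness---is therefore an extra layer the paper does not need. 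Two further cautions about that workaround: first, \eqref{TKLin} itself carries no appropriateness hypothesis, so you cannot appeal to it in proving \eqref{TKLin}; second, the bound $|E_m|\ll\langle\mbf{x}\rangle/x_+$ for $p>A$ tacitly assumes the coefficient $\alpha_j$ at the index achieving $x_+$ is nonzero, which need not hold unless you first reduce to the case that $L$ depends on every variable.
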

\begin{proof}
Observe first that
\begin{align*}
\sum_{\mbf{n} \in \mc{B}(\mbf{x})} h(L(\mbf{n})) &= \sum_{p^k \leq AX} h(p^k) \sum_{\mbf{n} \in \mc{B}(x) \atop p^k || L(\mbf{n})} 1 \\
&= \sum_{p^k \leq AX} h(p^k) \left(\sum_{\mbf{b} \in (\mb{Z}/p^k \mb{Z})^l \atop L(\mbf{b}) \equiv 0 (p^k)} \sum_{\mbf{n} \in \mc{B}(x) \atop n_j \equiv b_j (p^k) \forall j} 1 - \sum_{\mbf{b} \in (\mb{Z}/p^k \mb{Z})^l \atop L(\mbf{b}) \equiv 0 (p^{k+1})} \sum_{\mbf{n} \in \mc{B}(x) \atop n_j \equiv b_j (p^{k+1}) \forall j} 1\right) \\
&= \langle \mbf{x}\rangle\left(1+O\left(x_-^{-1}\right)\right)\sum_{p^k \leq AX}h(p^k)\left(\frac{\omega_L(p^k)}{p^{kl}} - \frac{\omega_L(p^{k+1})}{p^{(k+1)l}}\right) \\
&= \langle \mbf{x}\rangle (1+O(x_-^{-1}))\mu_{h,L}(AX).
\end{align*}
Expanding the square in \eqref{TKLin}, we thus get
\begin{align}
&\langle \mbf{x}\rangle^{-1} \sum_{\mbf{n} \in \mc{B}(\mbf{x})} |h(L(\mbf{n})) - \mu_{h,L}(AX)|^2 \nonumber \\
&= \langle \mbf{x}\rangle^{-1} \sum_{\mbf{n} \in \mc{B}(\mbf{x})} |h(L(\mbf{n}))|^2 - 2\text{Re}\left(\bar{\mu_{h,L}(AX)} \langle \mbf{x}\rangle^{-1}\sum_{\mbf{n} \in \mc{B}(\mbf{x})} h(L(\mbf{n}))\right) + |\mu_{h,L}(AX)|^2 \nonumber\\
&= \langle \mbf{x}\rangle^{-1} \sum_{\mbf{n} \in \mc{B}(\mbf{x})} |h(L(\mbf{n}))|^2 - |\mu_{h,L}(AX)|^2 + O(|\mu_{h,L}(AX)|^2x_-^{-1}). \label{OPEN}
\end{align}
The first term in \eqref{OPEN} can be rewritten as
\begin{align*}
&\langle \mbf{x} \rangle^{-1} \sum_{\mbf{n} \in \mc{B}(\mbf{x})} |h(L(\mbf{n}))|^2 = \langle \mbf{x} \rangle^{-1} \sum_{\mbf{n} \in \mc{B}(\mbf{x})} \sum_{p^{\mu},q^{\nu} || L(\mbf{n})} h(p^{\mu})\bar{h(p^{\nu})} \\
&= \langle \mbf{x} \rangle^{-1} \sum_{\mbf{n} \in \mc{B}(\mbf{x})} \sum_{p^{\mu} || L(\mbf{n})} |h(p^{\mu})|^2 + \langle \mbf{x} \rangle^{-1} \sum_{\mbf{n} \in \mc{B}(\mbf{x})}  \sum_{p^{\mu},q^{\nu} || L(\mbf{n}) \atop p \neq q} h(p^{\mu}) \bar{h(q^{\nu})} \\
&= \left(1+O(x_-^{-1})\right)\sum_{p^{\mu} \leq AX} |h(p^{\mu})|^2 \left(\frac{\omega_L(p^{\mu})}{p^{\mu l}} - \frac{\omega_L(p^{\mu+1})}{p^{(\mu+1)l}}\right) + \langle \mbf{x}\rangle^{-1}\sum_{p^{\mu},q^{\nu} \leq AX \atop p\neq q} h(p^{\mu})\bar{h(q^{\nu})} \sum_{\mbf{n} \in \mc{B}(\mbf{x}) \atop p^{\mu},q^{\nu} || L(\mbf{n})} 1 \\
&= \left(1+O(x_-^{-1})\right) \left(\sigma_{h,L}^2(AX) + \sum_{p^{\mu},q^{\nu} \leq AX \atop p \neq q} h(p^{\mu})\bar{h(q^{\nu})} \left(\frac{\omega_{h,L}(p^{\mu})}{p^{\mu l}}-\frac{\omega_{h,L}(p^{\mu+1})}{p^{(\mu+1) l}}\right)\left(\frac{\omega_{h,L}(q^{\nu})}{q^{\nu l}}-\frac{\omega_{h,L}(q^{\nu+1})}{q^{(\nu+1) l}}\right)\right).
\end{align*}
The second term in \eqref{OPEN} can be expressed as
\begin{align*}
|\mu_{h,L}(AX)|^2 &= \sum_{p^{\mu},q^{\nu} \leq AX} h(p^{\mu})\bar{h(q^{\nu})} \left(\frac{\omega_{L}(p^{\mu})}{p^{\mu l}}-\frac{\omega_{L}(p^{\mu+1})}{p^{(\mu+1) l}}\right)\left(\frac{\omega_{L}(q^{\nu})}{q^{\nu l}}-\frac{\omega_{L}(q^{\nu+1})}{q^{(\nu+1) l}}\right) \\
&= \left(\sum_{p^{\mu},q^{\nu} \leq AX \atop p \neq q} + \sum_{p^{\mu}, q^{\nu} \leq AX \atop p = q} \right)h(p^{\mu})\bar{h(q^{\nu})} \left(\frac{\omega_{L}(p^{\mu})}{p^{\mu l}}-\frac{\omega_{L}(p^{\mu+1})}{p^{(\mu+1) l}}\right)\left(\frac{\omega_{h,L}(q^{\nu})}{q^{\nu l}}-\frac{\omega_{L}(q^{\nu+1})}{q^{(\nu+1) l}}\right).
\end{align*}
Subtracting these two expressions gives
\begin{align*}
&\langle \mbf{x} \rangle^{-1} \sum_{\mbf{n} \in \mc{B}(\mbf{x})} |h(L(\mbf{n}))|^2 - |\mu_{h,L}(AX)|^2 \\
&\ll \sigma_{h,L}(AX)^2 + |\mu_{h,L}(AX)|^2x-^{-1} \\
&+ \left|\sum_{p^{\mu} \leq p^{\nu} \leq AX}h(p^{\mu})\bar{h(p^{\nu})} \left(\frac{\omega_{h,L}(p^{\mu})}{p^{\mu l}}-\frac{\omega_{h,L}(p^{\mu+1})}{p^{(\mu+1) l}}\right)\left(\frac{\omega_{h,L}(p^{\nu})}{p^{\nu l}}-\frac{\omega_{h,L}(p^{\nu+1})}{p^{(\nu+1) l}}\right)\right|.
\end{align*}
Hence, by Cauchy-Schwarz and \eqref{ALTMU},
\begin{align*}
\langle \mbf{x} \rangle^{-1} \sum_{\mbf{n} \in \mc{B}(\mbf{x})} |h(L(\mbf{n}))-\mu_{h,L}(AX)|^2 
&\ll \sigma_{h,L}(AX)^2 + |\mu_{h,L}(AX)|^2x_-^{-1}.
\end{align*}
This prove \eqref{TKLin}. For \eqref{MULTCASE}, note that by \eqref{ALTSIG}
\begin{equation*}
\sigma_{h,L}(AX)^2 \ll \sum_{p^k \leq AX} \frac{|f(p^k)-1|^2}{p^k} \ll \sum_{p^k \leq AX} \frac{1-\text{Re}(f(p^k))}{p^k} = \mb{D}^{\ast}(1,f;AX)^2,
\end{equation*}
and that \eqref{ALTMU} together with the $(A,B)$-appropriateness condition imply that
$$|\mu_{h,L}(AX)|^2x_-^{-1} \ll \log_2((l+1)Ax_+)^2x_-^{-1} \leq \frac{1}{(\log x_+)^B} \ll_l \frac{1}{(\log(lx_+))^B} \leq \frac{1}{(\log X)^B}.$$
\end{proof}
For $1 \leq y \leq x$ and each $1 \leq j \leq k$, define
\begin{equation*}
\mathfrak{P}(f_j;y,x) := \prod_{y < p \leq x} \sum_{\nu \geq 0} f_j(p^{\nu})\left(\frac{\omega_{L_j}(p^{\nu})}{p^{\nu l}}-\frac{\omega_{L_j}(p^{\nu+1})}{p^{(\nu+1)l}}\right) = \prod_{y < p \leq x} \left(1-\frac{1}{p}\right)\left(1+\sum_{k \geq 1} \frac{f_j(p^k)}{p^k}\right),
\end{equation*}
and write $\mathfrak{P}(f_j;x) := \mathfrak{P}(f_j;1,x)$. The first representation for $\mathfrak{P}(f_j;y,x)$ will be useful later; the second one follows from Remark \ref{SIMP}. \\
The following lemma allows us to conveniently decompose multilinear averages of products of arithmetic functions with good error, provided that one of the sequences is multiplicative and 1-pretentious. 
\begin{lem}\label{key} 
Let $A\geq 2$, $q \geq 1$ and let $\mbf{x}$ be $(A,B)$-appropriate. Let $g : \mb{N}^l \ra \mb{U}$ be any sequence and let $f: \mb{N} \ra \mb{U}$ be a multiplicative function such that $f(n) = 1$ whenever $(n,q) > 1$. Also, let $L : \mb{R}^l \ra \mb{R}$ be a primitive, integral form with height at most $A$. Then
\[ \sum_{ \mbf{n}\in \mc{B}(\mbf{x}) \atop q|L(\mbf{n})}f(L(\mbf{n}))g(\mbf{n})=\mathfrak{P}(f;AX)\left(\sum_{\mbf{n}\in \mc{B}(\mbf{x}) \atop q|L(\mbf{n})}g(\mbf{n})\right)+O\left(\frac{\langle \mbf{x} \rangle}{\sqrt{q}}\left(\mathbb{D}^{\ast}(1,f;AX)+\frac{1}{(\log X)^{B'}}\right)\right),\]
where $B' := \min\{1,B/2\}$. 
\end{lem}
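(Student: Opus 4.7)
The plan is to reduce, via Cauchy--Schwarz, to a Tur\'an--Kubilius-type second-moment estimate for $f(L(\mbf{n}))$ restricted to $q\mid L(\mbf{n})$, and then to handle that second moment through the Dirichlet convolution identity $f = 1 \ast g_f$ with $g_f := \mu \ast f$. Since $f(p^k) = 1$ for $p\mid q$, the multiplicative function $g_f$ is supported on integers coprime to $q$ and satisfies $|g_f(p^k)| \le 2$ at prime powers. This is the key structural input: it lets us reduce the pretentious distance of $f$ to a summable quantity $\sum_{p^k} |g_f(p^k)|^2/p^k \ll \mb{D}^{\ast}(1,f;AX)^2$.

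Concretely, setting
\[
E := \sum_{\mbf{n} \in \mc{B}(\mbf{x}),\, q\mid L(\mbf{n})} g(\mbf{n})\bigl(f(L(\mbf{n})) - \mathfrak{P}(f;AX)\bigr),
\]
the bound $|g|\le 1$ together with Cauchy--Schwarz yields $|E|^2 \le N\cdot V$, where $N := \#\{\mbf{n} \in \mc{B}(\mbf{x}) : q\mid L(\mbf{n})\}$ and $V := \sum_{\mbf{n},\, q\mid L(\mbf{n})} |f(L(\mbf{n})) - \mathfrak{P}(f;AX)|^2$. The count $N \ll \langle\mbf{x}\rangle/q$ follows from the lifting argument in Remark~\ref{SIMP} and the $(A,B)$-appropriateness hypothesis. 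Expanding the square in $V$ produces two single sums $\sum_{\mbf{n},\, q\mid L(\mbf{n})} F(L(\mbf{n}))$ for $F \in \{f, |f|^2\}$, both multiplicative. Writing $F = 1 \ast g_F$, swapping summations, applying the lattice count $N_m = \langle\mbf{x}\rangle/m + O(A)$ from Remark~\ref{SIMP} to $m = qd$ (permissible since $(d,q)=1$ forces $qd\mid L(\mbf{n}) \Leftrightarrow q\mid L(\mbf{n})$ and $d\mid L(\mbf{n})$), and invoking the Euler-factor identity
\[
\mathfrak{P}(F;AX) = \sum_{(d,q)=1,\, P^+(d)\le AX} g_F(d)/d,
\]
I will obtain
\[
\sum_{\mbf{n},\, q\mid L(\mbf{n})} F(L(\mbf{n})) = \mathfrak{P}(F;AX)\,N + O\!\left(\frac{\langle\mbf{x}\rangle}{\sqrt{q}}\Bigl(\mb{D}^{\ast}(1,F;AX) + (\log X)^{-B'}\Bigr)\right).
\]
Feeding this back into $V$ for both $F = f$ and $F = |f|^2$ reduces $V$ to $N\bigl(\mathfrak{P}(|f|^2;AX) - |\mathfrak{P}(f;AX)|^2\bigr)$ plus lower-order errors. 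A local Euler-factor comparison, using $|f(p^k)|^2 - 1 \le 2(1 - \text{Re}\,f(p^k))$ at each prime power, bounds the bracketed variance by $\mb{D}^{\ast}(1,f;AX)^2$. Substituting back into $|E|\le\sqrt{NV}$ delivers the claimed inequality.

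The main obstacle is controlling the tail $\sum_{d > AX,\, (d,q)=1} |g_f(d)|/d$ by the right-hand side of the claim. This is the classical Hal\'asz--Montgomery mechanism: Rankin's trick with $\sigma = 1 + c/\log(AX)$ dominates the tail by $\exp\!\bigl(C\sum_{p^k \le AX} |g_f(p^k)|/p^{k/2}\bigr)$, and Cauchy--Schwarz---using $|g_f(p^k)|^2 \le 2(1 - \text{Re}\,f(p^k))$---bounds the exponent by a constant multiple of $\mb{D}^{\ast}(1,f;AX)$, with a residual Euler factor $\prod_{p\mid q}(1-1/\sqrt{p})^{-1}$ recording the cost of restricting the product to primes coprime to $q$. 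The $(\log X)^{-B'}$ term with $B' = \min(1, B/2)$ arises from balancing the arithmetic error $O(A/x_-)$ in $N_m$ against the $(A,B)$-appropriateness hypothesis $x_- > l\log_2((l+1)Ax_+)^2(\log x_+)^B$; the exponent $B/2$ (rather than $B$) reflects the square root taken in passing from $V$ to $|E|$.
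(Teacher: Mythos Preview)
Your Cauchy--Schwarz reduction to the variance $V$ is fine, and the overall strategy of controlling $V$ by evaluating $\sum_{\mbf{n},\,q\mid L(\mbf{n})} F(L(\mbf{n}))$ for $F\in\{f,|f|^2\}$ is reasonable. The gap is in the convolution step. The lattice count you quote, $N_m=\langle\mbf{x}\rangle/m+O(A)$, is not what Remark~\ref{SIMP} says: that remark only gives the density $\omega_L(p^\mu)/p^{\mu l}=p^{-\mu}$ of residue classes, not an error term for the box count. For $l\ge 2$ the correct error in $\#\{\mbf{n}\in\mc{B}(\mbf{x}):m\mid L(\mbf{n})\}$ is of order $\langle\mbf{x}\rangle/x_-$, independent of $m$. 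Writing $F=1\ast g_F$ and swapping sums therefore costs
\[
\sum_{d\le AX}|g_F(d)|\cdot O\!\left(\frac{\langle\mbf{x}\rangle}{x_-}\right),
\]
and $\sum_{d\le AX}|g_F(d)|$ has no useful bound: even when $\mb{D}^{\ast}(1,f;AX)$ is bounded, one can have $|f(p)-1|\asymp 1$ for $\gg X/\log X$ primes $p\le AX$ (take any set of primes with bounded reciprocal sum but large counting function), so that $\sum_{d\le AX}|g_F(d)|\ge\sum_{p\le AX}|f(p)-1|$ is of size $X/\log X$. The resulting error swamps the target bound. Your ``tail'' discussion is also misdirected: there is no range $d>AX$ to truncate, since $d\mid L(\mbf{n})\le AX$; the problem is entirely the accumulated boundary error over $d\le AX$.

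The paper circumvents this by never opening the Dirichlet convolution. Instead it passes to the additive function $h(p^k)=f(p^k)-1$ and uses the pointwise approximation $f(L(\mbf{n}))=e^{h(L(\mbf{n}))}+O\bigl(\sum_{p^k\|L(\mbf{n})}|f(p^k)-1|^2\bigr)$, together with the Lipschitz bound $|e^a-e^b|\ll|a-b|$ for $\mathrm{Re}(a),\mathrm{Re}(b)\le 0$. The point is that the second moment $\sum_{\mbf{n}}|h(L(\mbf{n}))-\mu_h(AX)|^2$ (Lemma~\ref{erdoskacpol}) is controlled by a sum over \emph{prime powers} weighted by $1/p^k$, which converges to $\mb{D}^{\ast}(1,f;AX)^2$; no sum over general divisors $d$ ever appears. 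After Cauchy--Schwarz against $g(\mbf{n})1_{q\mid L(\mbf{n})}$, this yields the $\langle\mbf{x}\rangle/\sqrt{q}$ saving, and a final Taylor comparison of $e^{\mu_h(AX)}$ with $\mathfrak{P}(f;AX)$ finishes the argument. If you want to rescue your approach you would need to truncate $d$ at a very small threshold and control the contribution of large divisors of $L(\mbf{n})$ separately, but at that point you are essentially reinventing the additive-function route.
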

\begin{proof}
Since, for all $|z_j|,|w_j|\le 1$, $1 \leq j \leq n$, 
\begin{align*}
\left|\prod_{1\le j\le n}z_j-\prod_{1\le j\le n}w_j\right| &= \left|\prod_{1 \leq j \leq n-1} z_j(z_n-w_n) + w_n \left(\prod_{1 \leq j \leq n-1} z_j - \prod_{1 \leq j \leq n-1} w_j\right)\right| \\
&\leq |z_n - w_n| + \left|\prod_{1 \leq j \leq n-1} z_j - \prod_{1 \leq j \leq n-1} w_j\right|, 
\end{align*}
it follows by induction that
\begin{equation}\label{iterate1}
\left|\prod_{1\le j\le n}z_j-\prod_{1\le j\le n}w_j\right| \leq \sum_{1\le j\le n}|z_j-w_j|.
\end{equation}
Note that $e^{z-1}=z+O(|z-1|^2)$ for $|z|\le 1$. Therefore,
\begin{align*}
f(L(\mbf{n})) &= \prod_{p^k\vert| L(\bold{n})}f(p^k) = \prod_{p^{k}\vert| L(\bold{n})}e^{f(p^k)-1} + O\left(\left|\prod_{p^k\vert| L(\bold{n})}f(p^k)-\prod_{p^k\vert| L(\bold{n})}\left(f(p^k)+O(|f(p^k)-1|^2)\right)\right|\right)\\
&=\exp\left(\sum_{p^k\vert| L(\bold{n})}(f(p^k)-1)\right)+O\left(\sum_{p^k\vert| L(\bold{n})}|f(p^k)-1|^2\right).
\end{align*}
Define $h:\mb{N} \ra \mb{C}$ to be the additive function satisfying $h(p^k)=f(p^k)-1$ for each prime $p$ and $k \geq 1$. Note that $h(p^k) = 1$ whenever $p|q$. Hence,
\begin{align*}
\sum_{\bold{n}\in \mc{B}(\mbf{x})}f(L(\bold{n}))g(\bold{n})1_{q|L(\mbf{n})}-\sum_{\bold{n}\in \mc{B}(\mbf{x})}&g(\bold{n})e^{h(L(\bold{n}))}1_{q|L(\mbf{n})}\ll \sum_{\bold{n}\in \mc{B}(\mbf{x})}1_{q|L(\mbf{n})}\sum_{\substack{p^k\vert| L(\bold{n}),\\ p^k\leq AX}}|h(p^k)|^2&\\&\ll \langle \mbf{x}\rangle \sum_{p^k\leq AX \atop p|q}\frac{|f(p^k)-1|^2}{[q,p^k]}\ll \frac{1}{q}\langle \mbf{x}\rangle \mathbb{D}^{\ast}(f,1;AX)^2.
\end{align*}
Since $|e^a-e^b|\ll|a-b|$ for $\operatorname{Re}{(a)},\operatorname{Re}{(b)}\le 0,$ Cauchy-Schwarz together with Lemma \ref{erdoskacpol} imply 
\begin{align*}
&\sum_{\bold{n}\in \mc{B}(\mbf{x})}g(\bold{n})e^{h(L(\bold{n}))}1_{q|L(\mbf{n})}-e^{\mu_{h}(AX)}\sum_{\bold{n}\in \mc{B}(\mbf{x})}g(\bold{n})1_{q|L(\mbf{n})}\\
&\ll \sum_{\bold{n}\in \mc{B}(\mbf{x})}|e^{h(L(\bold{n}))}-e^{\mu_{h}(AX)}|1_{q|L(\mbf{n})}\ll \sum_{\bold{n}\in \mc{B}(\mbf{x})}|h(L(\bold{n}))-\mu_{h}(X)|1_{q|L(\mbf{n})}\\&\le \left(\frac{\langle \mbf{x}\rangle}{q}\sum_{\bold{n}\in \mc{B}(\mbf{x})}|h(L(\bold{n}))-\mu_{h}(AX)|^2\right)^{1/2}\ll \frac{\langle \mbf{x}\rangle }{\sqrt{q}}\left(\mathbb{D}^*(f,1;AX) + \frac{1}{(\log X)^{B/2}}\right).
\end{align*}
For each $p\leq AX$, put
\[\mu_{h,p}=\left(1-\frac{1}{p}\right)\sum_{k : p^k\le AX}\frac{h(p^k)}{p^k}\]
so that in light of Remark \ref{SIMP}, $\mu_h(AX) = \sum_{p \leq AX} \mu_{h,p}$. Observe that
\begin{align}
e^{\mu_{h,p}}&=1+\mu_{h,p}+O(\mu_{h,p}^2) \nonumber\\
&=1 + \left(1-\frac{1}{p}\right)\sum_{k\geq 0}\frac{f(p^k)}{p^k}-\left(1-\frac{1}{p}\right)\sum_{k \geq 1} \frac{1}{p^k} +O\left(\left|\sum_{p^k\le AX}\frac{h(p^k)}{p^k}\right|^2 + \sum_{p^k > AX} \frac{1}{p^k}\right) \label{TAYLOR1}\\
&= \left(1-\frac{1}{p}\right)\sum_{k \geq 0} \frac{f(p^k)}{p^k} + O\left(\frac{1}{p}\sum_{p^k \leq AX} \frac{|f(p^k)-1|^2}{p^k}+(AX)^{-1} \right),\label{TAYLOR2}
\end{align}
where we applied the Cauchy-Schwarz inequality to the first error term. \\
Since $\text{Re}(h(p^k)) \leq 0$ for all $k\geq 1$, $|e^{\mu_{h,p}}| \leq 1$; also, $|\mathfrak{P}(f,AX)| \leq 1$ trivially. Thus, applying \eqref{iterate1} and the Cauchy-Schwarz inequality once again,
 \begin{align*}
 |e^{\mu_{h}(AX)}-\mathfrak{P}(f;AX)| &\leq \sum_{p\le AX}\left|e^{\mu_{h,p}}-\left(1-\frac{1}{p}\right)\sum_{k\geq 0}\frac{f(p^k)}{p^k}\right|
 \\&\ll \sum_{p^k\le AX}\frac{1}{p}\frac{|f(p^k)-1|^2}{p^k}+(AX)^{-1}\sum_{p\le AX} 1\\
&\ll \mathbb{D}^{\ast}(f,1;AX)+\frac{1}{\log (AX)}.
 \end{align*}
This implies that
\begin{equation*}
e^{\mu_h(AX)} \sum_{\mbf{n} \in \mc{B}(\mbf{x})} g(\mbf{n})1_{q|L(\mbf{n})} = \mathfrak{P}(f;AX)\sum_{\mbf{n} \in \mc{B}(\mbf{x})} g(\mbf{n})1_{q|L(\mbf{n})} + O\left(\frac{\lla \mbf{x}\rra}{q}\left(\mathbb{D}^{\ast}(f,1;AX)+\frac{1}{\log (AX)}\right)\right),
\end{equation*}
and the claim follows.
 \end{proof}
Next, we show how the factors $\mathfrak{P}(f_j;X)$ relate to the $p$-adic local factors $M_p(\mbf{f},\mbf{L})$.
\begin{lem} \label{LOCAL}
Let $X \geq y \geq 2$. Let $\mbf{L}$ be a primitive integral system of size $k$, and let $\mbf{f}$ be a vector of $k$ 1-bounded multiplicative functions that are supported on prime powers $p^{\mu} > y$. Then, as $y \ra \infty$,
\begin{equation*}
\prod_{y < p \leq X} M_p(\mbf{f};\mbf{L}) = \left(1+O_k\left(\frac{1}{\log y}\right)\right)\left( \prod_{1 \leq j \leq k} \mathfrak{P}(f_j;X)+O\left(y^{-1+o(1)}\right)\right).
\end{equation*}
\end{lem}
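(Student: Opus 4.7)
The strategy is to approximate each local factor $M_p(\mbf{f},\mbf{L})$ by a product of single-form Euler factors $\mathfrak{P}_p(f_j) := (1-1/p)\sum_{\nu \geq 0} f_j(p^{\nu})/p^{\nu}$ and then multiply over primes. Writing $g_j := \mu \ast f_j$, so that $f_j(p^{v_p(L_j(\mbf{n}))}) = \sum_{m\ge 0} g_j(p^m)\,\mathbf{1}_{p^m\mid L_j(\mbf{n})}$, one obtains by averaging and passing to the limit
$$M_p(\mbf{f},\mbf{L}) = \sum_{\mbf{m}\in\mb{Z}_{\ge 0}^k} R_p(\mbf{m}) \prod_{1\le j\le k} g_j(p^{m_j}),\quad R_p(\mbf{m}) := \lim_{x\to\infty}x^{-l}\bigl|\{\mbf{n}\in[x]^l : p^{m_j}\mid L_j(\mbf{n})\,\forall j\}\bigr|.$$
For a single primitive form, Remark~\ref{SIMP} gives $R_p(m\mbf{e}_j) = p^{-m}$ (with $\mbf{e}_j$ the $j$-th standard basis vector), hence $M_p(f_j,L_j) = \mathfrak{P}_p(f_j)$.

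Next, I would establish the per-prime estimate $M_p(\mbf{f},\mbf{L}) = \prod_j \mathfrak{P}_p(f_j)\bigl(1 + O_k(p^{-2})\bigr)$ for all primes $p$ outside a finite exceptional set depending on $\mbf{L}$. The point is that the zero- and first-order terms of both sides (as power series in $1/p$) match exactly: $R_p(\mbf{m})$ agrees with the independent density $\prod_j p^{-m_j}$ whenever $|\mathrm{supp}(\mbf{m})|\le 1$, by Remark~\ref{SIMP}; and the remaining contributions, from $\mbf{m}$ with $|\mathrm{supp}(\mbf{m})|\ge 2$, are controlled via $R_p(\mbf{m})\le p^{-m_{j_1}-m_{j_2}}$ for any pair of indices with $L_{j_1},L_{j_2}$ linearly independent modulo $p$---a condition guaranteed by primitivity of $\mbf{L}$ once $p$ exceeds every prime dividing some $2\times 2$ minor of the coefficient matrix. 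Combined with $|g_j(p^m)|\le 2$ and geometric summation over the remaining coordinates, this yields the desired $O_k(p^{-2})$ error.

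Taking the product over $y<p\le X$ and using that $|\mathfrak{P}_p(f_j)|\ge 1-3/p>1/2$ for $p>y$ large, together with $\sum_{p>y}p^{-2}\ll 1/y$, gives
$$\prod_{y<p\le X}M_p(\mbf{f},\mbf{L}) = \bigl(1+O_k(y^{-1+o(1)})\bigr)\prod_{1\le j\le k} \mathfrak{P}(f_j;y,X).$$
To convert $\mathfrak{P}(f_j;y,X)$ into $\mathfrak{P}(f_j;X)$, I would invoke the support hypothesis: for each $p\le y$, the tail $\sum_{\nu\ge 1}f_j(p^{\nu})/p^{\nu}$ is supported on prime powers $p^{\nu} > y$ and hence bounded by $O(1/y)$, giving $\mathfrak{P}_p(f_j) = (1-1/p)(1+O(1/y))$. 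Multiplying over $p\le y$ and using $\pi(y)\cdot O(1/y) = O(1/\log y)$ produces the cumulative factor $(1+O_k(1/\log y))$, completing the argument. The principal obstacle lies in the per-prime factorization estimate: even in the regime $k>l$, where $\mbf{L}$ is forced to have rank-drop at every prime, primitivity still delivers the pairwise mod-$p$ independence that controls the deviation at second order in $1/p$---exactly what is needed.
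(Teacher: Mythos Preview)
Your proposal is correct and follows essentially the same route as the paper. Both arguments express $M_p(\mbf{f},\mbf{L})$ and $\prod_j\mathfrak{P}_p(f_j)$ as sums over exponent tuples, observe that the tuples with at most one nonzero entry contribute identically to both, bound the remaining contribution by $O_k(p^{-2})$ per prime, and then invoke the support hypothesis to show $\prod_{p\le y}\mathfrak{P}_p(f_j)=1+O(1/\log y)$ exactly as in the paper's display~\eqref{SMALLY}.

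The differences are cosmetic: you parametrize via divisibility $p^{m_j}\mid L_j(\mbf{n})$ using $g_j=\mu\ast f_j$, whereas the paper uses exact divisibility $p^{\nu_j}\Vert L_j(\mbf{n})$; and you accumulate a multiplicative per-prime error $(1+O_k(p^{-2}))$, whereas the paper telescopes via~\eqref{iterate1} to get an additive $\sum_{p>y}|M_p-\prod_j\mathfrak{P}_p(f_j)|$. Your justification of the $O_k(p^{-2})$ bound---via pairwise linear independence of the $L_j$ modulo $p$ for $p$ outside a finite exceptional set, giving $R_p(\mbf{m})\le p^{-m_{j_1}-m_{j_2}}$ for the two largest exponents in the support---is actually more explicit than the paper's, which asserts that each term in~\eqref{COMPMPMFP} is $O(p^{-m})$ with $m=\sum_j\nu_j$; that stronger pointwise claim is not obvious when $k>l$ (e.g.\ for a $3$-term AP with all $\nu_j=1$ the coupled density is $\asymp p^{-2}$, not $p^{-3}$), though the weaker $O_k(p^{-2})$ bound on the total, which is all that is needed, holds for the reason you give.
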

\begin{proof}
Let $x$ be large positive real number. We have
\begin{align}
x^{-l} \sum_{\mbf{n} \in [x]^l} \prod_{1 \leq j \leq k} f_{j,p}(L_j(\mbf{n})) &= x^{-l} \sum_{\nu_1,\ldots,\nu_k \geq 0} \prod_{1 \leq j \leq k} f_j(p^{\nu_j})\sum_{\mbf{b}^{(1)} \in (\mb{Z}/p^{\nu_1}\mb{Z})^l \atop p^{\nu_1} || L_1(\mbf{b}^{(1)})}\cdots \sum_{\mbf{b}^{(k)} \in (\mb{Z}/p^{\nu_k}\mb{Z})^l \atop p^{\nu_k} || L_k(\mbf{b}^{(k)})} \sum_{\mbf{n} \in [x]^l \atop n_j \equiv b_j^{(t)} (p^{\nu_t}) \forall j,t} 1 \nonumber\\
&= x^{-l} \sum_{\nu_1,\ldots,\nu_k \geq 0} \prod_{1 \leq j \leq k} f_j(p^{\nu_j})\sum_{\mbf{b}^{(1)} \in (\mb{Z}/p^{\nu_1}\mb{Z})^l \atop p^{\nu_1} || L_1(\mbf{b}^{(1)})}\cdots \sum_{\mbf{b}^{(k)} \in (\mb{Z}/p^{\nu_k}\mb{Z})^l \atop p^{\nu_k} || L_k(\mbf{b}^{(k)})} \prod_{1 \leq j \leq l} \sum_{n_j \leq x \atop n_j \equiv b_j^{(t)} (p^{\nu_t}) \forall t} 1. \label{QUANT}
\end{align}
By the Chinese remainder theorem, for each $j$ there is a unique solution modulo $p^{\max_{1 \leq t \leq k} \nu_t}$ to the $k$ simultaneous congruences in the inner sum of \eqref{QUANT} if, and only if, $b_j^{(r)} \equiv b_j^{(s)} (p^{\min(\nu_r,\nu_s)})$ for each $1 \leq r < s \leq k$. Hence, the right side of \eqref{QUANT} is
\begin{equation*}
\sum_{0 \leq \nu_1,\ldots,\nu_k \leq \log x/\log p} \left(\prod_{1 \leq j \leq k} f_j(p^{\nu_j}) \right)p^{-l\max_t \nu_t}\mathop{\sum_{\mbf{b}^{(1)} \in (\mb{Z}/p^{\nu_1}\mb{Z})^l \atop p^{\nu_1} || L_1(\mbf{b}^{(1)})}\cdots \sum_{\mbf{b}^{(k)} \in (\mb{Z}/p^{\nu_k}\mb{Z})^l \atop p^{\nu_k} || L_k(\mbf{b}^{(k)})}}_{b_t^{(r)} \equiv b_t^{(s)} (p^{\min(\nu_r,\nu_s)}) \forall r,s} 1 + O\left(x^{-1} \left(\frac{\log X}{\log p}\right)^l\right).
\end{equation*}
Taking $x \ra \infty$, we therefore have
\begin{equation}
\prod_{y < p \leq X} M_p(\mbf{f},\mbf{L}) = \prod_{y < p \leq X} \sum_{\nu_1,\ldots,\nu_k \geq 0} \left(\prod_{1 \leq j \leq k} f_j(p^{\nu_j}) \right)p^{-l\max_t \nu_t}\mathop{\sum_{\mbf{b}^{(1)} \in (\mb{Z}/p^{\nu_1}\mb{Z})^l \atop p^{\nu_1} || L_1(\mbf{b}^{(1)})}\cdots \sum_{\mbf{b}^{(k)} \in (\mb{Z}/p^{\nu_k}\mb{Z})^l \atop p^{\nu_k} || L_k(\mbf{b}^{(k)})}}_{b_t^{(r)} \equiv b_t^{(s)} (p^{\min(\nu_r,\nu_s)})} 1. \label{PADICS}
\end{equation}
Now, consider the product of the factors $\mathfrak{P}(f_j;X)$,
i.e., 
\begin{equation*}
\prod_{1 \leq j \leq k} \mathfrak{P}(f_j; X) = \prod_{1 \leq j \leq k} \prod_{p \leq X} \sum_{\nu_j \geq 0} f_j(p^{\nu_j}) \left(\frac{\omega_{L_j}(p^{\nu_j})}{p^{\nu_jl}} - \frac{\omega_{L_j}(p^{\nu_j+1})}{p^{(\nu_j+1)l}}\right). 
\end{equation*}
By the prime number theorem, the contribution from $p \leq y$ is
\begin{equation}\label{SMALLY}
\prod_{1 \leq j \leq k} \prod_{p \leq y} \left(1+O\left(\sum_{p^{\nu} > y \atop p \leq y} \frac{1}{p^{\nu}}\right)\right) = \prod_{1 \leq j \leq k} \left(1+O\left(y^{-1}\pi(y)\right)\right) = 1+O\left(\frac{k}{\log y}\right),
\end{equation}
whence
\begin{align*}
\prod_{1 \leq j \leq k} \mathfrak{P}(f_j;X) &= \left(1+O_k\left(\frac{1}{\log y}\right)\right)\prod_{1 \leq j \leq k} \mathfrak{P}(f_j;y,X)\\
&= \left(1+O_k\left(\frac{1}{\log y}\right)\right)\prod_{y < p \leq X} \sum_{\nu_1,\ldots,\nu_k \geq 0} \prod_{1 \leq j \leq k} f(p^{\nu_j})\left(\frac{\omega_{L_j}(p^{\nu_j})}{p^{\nu_jl}}-\frac{\omega_{L_j}(p^{\nu_j+1})}{p^{(\nu_j+1)l}}\right).
\end{align*}
Subtracting $\prod_{y < p \leq X} M_p(\mbf{f},\mbf{L})$ from $\prod_{1 \leq j \leq k} \mathfrak{P}(f_j;y,X)$ and using the fact that $|\mathfrak{P}(f_j;y,X)| \leq 1$ for each $j$, \eqref{iterate1} gives
\begin{align}
&\left|\prod_{y < p \leq X} M_p(\mbf{f},\mbf{L}) - \prod_{1 \leq j \leq k} \mathfrak{P}(X,y;f_j)\right| \nonumber\\
&\leq \sum_{y < p \leq X} \sum_{\nu_1,\ldots,\nu_k \geq 0} \left|\prod_{1 \leq j \leq k} \left(\frac{\omega_{L_j}(p^{\nu_j})}{p^{\nu_jl}}-\frac{\omega_{L_j}(p^{\nu_j+1})}{p^{(\nu_j+1)l}}\right) - p^{-l\max_t \nu_t}\mathop{\sum_{\mbf{b}^{(1)} \in (\mb{Z}/p^{\nu_1}\mb{Z})^l \atop p^{\nu_1} || L_1(\mbf{b}^{(1)})}\cdots \sum_{\mbf{b}^{(k)} \in (\mb{Z}/p^{\nu_k}\mb{Z})^l \atop p^{\nu_k} || L_k(\mbf{b}^{(k)})}}_{b_t^{(r)} \equiv b_t^{(s)} (p^{\min(\nu_r,\nu_s)})} 1\right|. \label{COMPMPMFP}
\end{align}
Observe that when at most one of the indices $1 \leq j \leq k$ satisfies $\nu_j \geq 1$, the compatibility condition on the vectors $\mbf{b}^{(t)}$ is automatically satisfied, and can hence be dropped. Thus, for $\sum_{1 \leq j \leq k} \nu_j \leq 1$, the $k$ sums over vectors $\mbf{b}^{(t)}$ in \eqref{PADICS} are precisely
\begin{equation*}
p^{-l\max_t \nu_t} \sum_{\mbf{b}^{(1)} \in (\mb{Z}/p^{\nu_1}\mb{Z})^l \atop p^{\nu_1} || L_1(\mbf{b}^(1)}\cdots \sum_{\mbf{b}^{(k)} \in (\mb{Z}/p^{\nu_k}\mb{Z})^l \atop p^{\nu_k} || L_k(\mbf{b}^{(k)}}1 = \prod_{1 \leq j \leq k} \left(\frac{\omega_{L_j}(p^{\nu_j})}{p^{\nu_jl}} - \frac{\omega_{L_j}(p^{\nu_j+1})}{p^{(\nu_j+1)l}}\right).
\end{equation*}
By well-known results on partitions (see, for instance \cite{ErdPart}), the number of terms in the $\nu_j$ sums with $\sum_{1 \leq j \leq k} \nu_j = m \geq 2$ is at most $e^{C\sqrt{m}}$, where $C > 0$ is absolute. Since each of the inner terms in \eqref{COMPMPMFP} has size $O\left(p^{-m}\right)$, it follows that
\begin{equation*}
\left|\prod_{y < p \leq X} M_p(\mbf{f},\mbf{L}) - \prod_{1 \leq j \leq k} \mathfrak{P}(f_j;y,X)\right| \ll \sum_{y < p \leq X} \sum_{m \geq 2} e^{C\sqrt{m}}p^{-m}\ll \sum_{y < p \leq X} p^{-2+o(1)}\ll y^{-1+o(1)}.
\end{equation*}
Combining this with \eqref{SMALLY} completes the proof.
\end{proof}
Lastly, we shall require the following smooth numbers estimate due to DeBruijn \cite{DeB}. Recall that for $x \geq y \geq 2$, $\Psi(x,y)$ denotes the number of integers less than or equal to $x$, all of whose prime factors are less than or equal to $y$.
\begin{lem} \label{SMOOTH}
For $x \geq y \geq 2$,
\begin{equation*}
\log \Psi(x,y) = \left(1+o(1)\right) \left(\frac{\log x}{\log y}\log\left(1+\frac{y}{\log x}\right) + \frac{y}{\log y}\log\left(1+\frac{\log x}{y}\right)\right).
\end{equation*}
\end{lem}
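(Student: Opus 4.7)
The plan is to reprove this estimate of de Bruijn by the standard Rankin/saddle-point approach. The upper bound starts from the observation that for every $\sigma \in (0,1]$,
\[
\Psi(x,y) \leq x^{\sigma}\sum_{P^+(n) \leq y} n^{-\sigma} = x^{\sigma}\prod_{p \leq y}(1-p^{-\sigma})^{-1},
\]
so that
\[
\log \Psi(x,y) \leq \sigma \log x + \sum_{p \leq y}\log\bigl(1/(1-p^{-\sigma})\bigr).
\]
I would minimize the right-hand side over $\sigma$ by solving the saddle-point equation $\log x = \sum_{p \leq y}(\log p)/(p^{\sigma}-1)$. For the matching lower bound, I would construct explicit families of $y$-smooth integers $\leq x$ --- for instance, by pigeonholing among products of prime powers with total logarithm at most $\log x$ and counting using the prime number theorem --- then rerun the same saddle-point analysis on this constructed family.

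The central technical step is the asymptotic evaluation of the sums $\sum_{p \leq y} p^{-\sigma}$ and $\sum_{p \leq y}(\log p)(p^{\sigma}-1)^{-1}$ at the optimal $\sigma$, uniformly in $x$ and $y$. Using $\pi(t) = t/\log t + O(t/(\log t)^2)$ together with partial summation, these prime sums convert into tractable integrals of the shape $\int_{2}^{y} t^{-\sigma}/\log t\, dt$, which can be evaluated by an elementary change of variable $t = e^{v \log y}$. Substituting the resulting saddle-point value of $\sigma$ back into the Rankin bound and simplifying would produce precisely the two-term form on the right-hand side of the lemma.

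The main obstacle is handling everything uniformly across the very different regimes $y \ll \log x$, in which $\Psi(x,y)$ is dominated by integers built from a few small primes raised to large powers, and $y \gg \log x$, in which a typical integer below $x$ is already $y$-smooth. The two summands in the claimed asymptotic are precisely what allow a single formula to capture both behaviours: the first, $(\log x/\log y)\log(1 + y/\log x)$, dominates when $y$ is large relative to $\log x$ (where it is close to $\log x$), while the second, $(y/\log y)\log(1 + \log x/y)$, dominates when $y$ is small (where it behaves like $\pi(y)\log(\log x/\log y)$). Verifying that the error introduced by replacing the prime sums by their integral representations remains $o(1)$ times this combined main term, uniformly as the parameter $u := (\log x)/(\log y)$ ranges over $[1,\infty)$, is the most delicate part of the argument.
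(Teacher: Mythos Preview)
The paper does not prove this lemma at all: it is simply quoted from de Bruijn \cite{DeB} as a known result, so there is no proof in the paper to compare your proposal against. Your outline is the standard Rankin/saddle-point route to de Bruijn's theorem and is correct in its broad strokes, though as written it is only a sketch rather than a proof --- in particular, the lower-bound half (``construct explicit families of $y$-smooth integers $\leq x$ \ldots\ by pigeonholing'') would need to be made precise, and the uniformity you flag at the end is genuinely the hard part of the argument and is not actually carried out here. For the purposes of this paper, citing de Bruijn is entirely appropriate; if you wish to include a self-contained proof, you would need to fill in the saddle-point computation and the lower-bound construction in full.
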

\subsection{Technical Results for Proposition~\ref{MultAvgNonPret}.}
As mentioned in the introduction, Proposition~\ref{MultAvgNonPret} follows from Theorem 1.6 of \cite{MRT}. A special case of the latter, which we use in the sequel (see Section 4), is as follows.
\begin{thm}[\cite{MRT}, Theorem 1.6] \label{ThmMRT}
Fix $A,m \geq 1$ and let $x \geq 10$. Let $g_1,\ldots,g_k$ be 1-bounded, complex-valued multiplicative functions and let $c_1,\ldots,c_k,b_1,\ldots,b_k \in \mb{N}$ be such that $c_j,b_j \leq A$ for each $j$. Then for each $1 \leq j_0 \leq x$,
\begin{equation*}
x^{-(k+1)}\sum_{1 \leq h_1,\ldots,h_{k-1} \leq mAx} \left|\sum_{1 \leq n \leq x} \prod_{1 \leq j \leq k} g_j(c_jn +b_j + h_j)\right| \ll m^{k-1} k^2 A^k \left(e^{-\mc{D}_{j_0}(x)/80}+(\log x)^{-1/3000} \right).
\end{equation*}
\end{thm}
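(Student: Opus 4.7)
The plan is to apply Cauchy--Schwarz iteratively to isolate the distinguished factor $g_{j_0}$, and then to invoke the averaged Elliott-type bound on two-point correlations of a single multiplicative function.

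First I would set $S(\mbf{h}) := \sum_{n \leq x} \prod_j g_j(c_j n + b_j + h_j)$ and perform the Gowers--Cauchy--Schwarz iteration: apply Cauchy--Schwarz $k-1$ times to the shift average $\sum_{\mbf{h}} |S(\mbf{h})|$, each iteration eliminating one of the factors $g_j$ for $j \neq j_0$ at the cost of doubling the dimension of the shift variable via a shift-difference change of coordinates. Up to the combinatorial bookkeeping that produces the factor $m^{k-1} k^2 A^k$ (tracking the height $A$, the number of functions $k$, and the range $mAx$ of each shift), this bounds the original $\ell^1$ average by the averaged two-point correlation
\begin{equation*}
\frac{1}{mAx}\sum_{|h| \leq CmAx} \left|\sum_{n \leq x} g_{j_0}(c_{j_0}n+b_{j_0})\, \bar{g_{j_0}(c_{j_0}n+b_{j_0}+h)}\right|,
\end{equation*}
for some constant $C = C_k$, with the diagonal contribution $h = 0$ trivially of size $O(x)$ and hence absorbed.

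Next I would invoke the quantitative averaged Elliott conjecture of Matom\"aki--Radziwi\l{}\l{}--Tao to control this inner expression, bounding it by $x\bigl(e^{-\mc{D}_{j_0}(x)/80} + (\log x)^{-1/3000}\bigr)$. Combined with the normalization $x^{-(k+1)}$ and the overhead from the Cauchy--Schwarz iteration, this yields the claimed estimate.

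The key obstacle is the averaged Elliott estimate itself, which lies well beyond the reach of Hal\'asz' theorem and is the main content of \cite{MRT}. Its proof combines three pillars: the Matom\"aki--Radziwi\l{}\l{} theorem controlling short-interval averages of multiplicative functions, an orthogonality criterion in the spirit of K\'atai and Bourgain--Sarnak--Ziegler, and Tao's entropy decrement argument. The entropy decrement is the decisive device for breaking the pretentious obstruction: averaging over $h$ effectively permits one to replace a shift $h$ by a dilated shift $ph$ for small primes $p$, and this freedom combined with the short-interval input allows a reduction to a regime where the pretentious distance $\mc{D}_{j_0}$ controls the correlation. Given the depth of this input, we invoke Theorem \ref{ThmMRT} as a black box rather than attempting to reprove it here.
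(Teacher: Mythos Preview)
The paper does not prove this statement at all: it is quoted verbatim as Theorem~1.6 of \cite{MRT} and used as a black box, with only a remark that extending the shift range from $x$ to $mAx$ does not affect the argument in \cite{MRT}. So there is no proof in the paper to compare your proposal against; the appropriate ``proof'' here is simply a citation.

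That said, your sketch has two defects worth flagging. First, there is a circularity: you end by writing ``we invoke Theorem~\ref{ThmMRT} as a black box,'' but Theorem~\ref{ThmMRT} is precisely the statement you are meant to be proving. If your intent was to reduce the general $k$ case to the two-point case and then cite the two-point case of \cite{MRT}, you should say that explicitly and not refer back to the theorem itself.

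Second, and more substantively, your account of what goes into the proof in \cite{MRT} is inaccurate. The entropy decrement argument is \emph{not} used there; it was introduced only later by Tao in \cite{Tao} for the \emph{logarithmically} averaged two-point correlations. The proof in \cite{MRT} proceeds instead via a circle-method decomposition of the shift average: major arcs are handled using the pretentious distance (this is where $\mc{D}_{j_0}$ enters), and minor arcs are handled by combining the Matom\"aki--Radziwi\l\l{} short-interval theorem with a K\'atai--Bourgain--Sarnak--Ziegler type orthogonality criterion. The iterated Cauchy--Schwarz reduction you describe is not the route taken, and in any case would introduce $2^{k-1}$-th roots and a more complicated dependence on the parameters than the clean bound stated here.
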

\begin{rem}
Strictly speaking, in the statement of Theorem 1.6 in \cite{MRT} the range of $h_j$ is bounded above by $x$, rather than by $mAx$, as written here. However, for fixed $m$ and $A$, a careful look at the proof there shows that a perturbation of $H$ by a fixed quantity does not affect their arguments (which depend at most on $\log H$).
\end{rem}
It turns out that we can reduce the proof of Proposition~\ref{MultAvgNonPret} to showing that a similar statement holds when the system of linear forms $\mbf{L}$ is a Gowers system. This is a consequence of Lemma \ref{GVNT} below, which allows us to prove a quantitative refinement of Lemma 3.4 in \cite{HF} in Section 4. To state Lemma \ref{GVNT} precisely, we recall the following definition (see Definition 1.3.2 in \cite{HOFA}).
\begin{def1} \label{CSCOMP}
A collection $\mbf{L}$ of $k$ integral linear forms in $l$-variables on a finite Abelian group $G$ is said to have \emph{Cauchy-Schwarz complexity} at most $s$ if, for each $1 \leq j \leq k$ we can partition the set of forms $\{L_1,\ldots,L_k\}\bk \{L_j\}$ into $s+1$ classes $\{C_t: 1 \leq t \leq s+1\}$ such that $L_j \notin \text{Span}(C_t)$ for each $1 \leq t \leq s+1$. (If no such $s$ exists then the collection of forms is said to have Cauchy-Schwarz complexity $\infty$.)
\end{def1} 
Note that if $k \geq 2$ then a primitive integral system of $k$ linear forms always has Cauchy-Schwarz complexity at most $k-2$, by taking the partition of singletons. Also, if an integral system of linear forms is \emph{linearly independent} then the Cauchy-Schwarz complexity is at most $0$. \\
We may now state the following lemma, which is Exercise 1.3.23 in \cite{HOFA}.
\begin{lem}[Generalized von Neumann Inequality] \label{GVNT}
Let $G$ be a finite Abelian group and let $\psi_1,\ldots,\psi_k : G^l \ra G$ be a set of integral linear forms with Cauchy-Schwarz complexity at most $s$. If $f_1,\ldots,f_k : G \ra \mb{C}$ are 1-bounded functions on $G$ then
\begin{equation*}
|G|^{-l} \sum_{\mbf{g} \in G^l} \prod_{1 \leq j \leq k} f_j(\psi_j(\mbf{g})) \ll_{k,l} \min_{1 \leq j \leq k} \|f_j\|_{U^{s+1}(G)}.
\end{equation*}
\end{lem}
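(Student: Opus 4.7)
The plan is to prove the stated bound for one fixed index, say $j_0 = 1$; the other cases follow by relabeling. By the Cauchy--Schwarz complexity hypothesis, I fix at the outset a partition $\{L_2, \ldots, L_k\} = C_1 \cup \cdots \cup C_{s+1}$ of the remaining forms into $s+1$ classes with $L_1 \notin \mathrm{Span}(C_t)$ for every $t$. The core strategy is then iterated Cauchy--Schwarz: at each of $s+1$ stages I eliminate the forms in one class $C_t$ at the cost of doubling a single variable of integration, and after all $s+1$ stages I arrive at an expression involving only shifts of $f_1$ that is precisely $\|f_1\|_{U^{s+1}(G)}^{2^{s+1}}$.

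Before iterating, I would perform a preparatory linear algebra step. For each $t$, since $L_1$ is not in the $\mb{Q}$-linear span of $C_t$, there is a direction (a coordinate of $G^l$ after a unimodular change of basis) on which $L_1$ depends nontrivially and every $L_j \in C_t$ is constant. I would choose such directions $e_{i_1}, \ldots, e_{i_{s+1}}$ in advance so that later doublings do not interfere with earlier ones; this can be arranged because at stage $t$ we only require a coordinate that is not in the joint kernel structure generated by $C_t$.

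Then I carry out the iteration. At stage $t$, the expression is an average of a product $\prod_{L_j \notin C_1 \cup \cdots \cup C_{t-1}} f_j^{(\text{sgn})}(L_j(\mathbf{g}, \mathbf{h}))$ over the original variables together with accumulated difference parameters $\mathbf{h}$, where each factor from $C_1 \cup \cdots \cup C_{t-1}$ has been eliminated and each remaining factor carries a cube structure of signs and conjugations. Because $L_j$ for $L_j \in C_t$ does not depend on $g_{i_t}$, I pull the $C_t$-factors outside the sum in $g_{i_t}$, apply Cauchy--Schwarz in $g_{i_t}$, and open the square as a sum over two independent copies $g_{i_t}, g_{i_t}'$ of this variable. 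This eliminates $C_t$ (up to a factor bounded by 1 since the $f_j$ are 1-bounded) and adds one more layer of the cube structure to the surviving factors including $L_1$.

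After $s+1$ such stages, the only surviving factors involve $f_1$ and its conjugate evaluated at $2^{s+1}$ points of the form $L_1(\mathbf{g}) + \mathbf{s} \cdot \mathbf{h}$ with $\mathbf{s} \in \{0,1\}^{s+1}$, where $\mathbf{h} = (h_1, \ldots, h_{s+1})$ records the difference parameters introduced at the successive Cauchy--Schwarz steps. Because $L_1$ is a linear form and the coordinate shifts $h_t$ have been chosen to lie in directions where $L_1$ is nonzero, the induced map $(\mathbf{g}, \mathbf{h}) \mapsto (L_1(\mathbf{g}), (\partial_{i_t} L_1) h_t)$ is surjective onto $G \times G^{s+1}$ with uniform fibers, so after normalizing we recognize the resulting expression as $\|f_1\|_{U^{s+1}(G)}^{2^{s+1}}$. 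Taking the $2^{s+1}$-th root, along with the $O_{k,l}(1)$ loss accumulated from the constant factors over $G$, yields the claim.

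The main technical obstacle is bookkeeping, specifically ensuring that the coordinates $i_1, \ldots, i_{s+1}$ can be chosen compatibly so that each Cauchy--Schwarz step removes exactly the intended class and that the final substitution indeed identifies the average as the Gowers norm of $f_1$. A clean way to handle this is to first make a single global linear change of variables on $G^l$ that simultaneously diagonalizes the roles of the $C_t$ relative to $L_1$, after which the iteration becomes transparent and reduces to repeated one-variable Cauchy--Schwarz, a standard manipulation for which the details can be verified as in the proof of the Gowers--Cauchy--Schwarz inequality.
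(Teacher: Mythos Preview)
The paper does not give its own proof of this lemma; it simply cites it as Exercise~1.3.23 in Tao's \emph{Higher Order Fourier Analysis} and moves on. Your outline is the standard iterated Cauchy--Schwarz argument that this exercise intends, and the strategy is correct.

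Two small points of caution on your write-up. First, the claim that the final map $(\mathbf{g},\mathbf{h}) \mapsto (L_1(\mathbf{g}),\ (\partial_{i_t}L_1)\,h_t)$ is surjective onto $G \times G^{s+1}$ with uniform fibres requires the relevant integer coefficients to be invertible modulo $|G|$; for a general finite abelian group this need not hold, and the clean statement is usually made under the assumption that $|G|$ is coprime to the heights of the forms (which is exactly the situation in the paper's only application, Lemma~\ref{REDUCE}, where $G = \mb{Z}/N\mb{Z}$ with $N$ a large prime). Second, the ``single global linear change of variables that simultaneously diagonalizes the roles of the $C_t$'' is more than you need and is not obviously available; the standard argument simply makes a local change of variables at each stage, picking a fresh direction $v_t$ in the common kernel of $C_t$ on which $L_1$ is nonzero, and this avoids any compatibility issues between stages.
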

\section{Proof of Theorem~\ref{MultAvg}}
As in the statement of Theorem \ref{MultAvg} put $F_j(n) := f_j(n) \bar{\chi_j}(n) n^{-it_j}$ when $(n,q_j) = 1$, and $F_j(n) = 1$ otherwise. Furthermore, 
let $F_j = F_{j,s} \cdot F_{j,l}$, where we set 
\begin{equation*}
F_{j,s}(p^k) := \begin{cases} F_j(p^k) &: \ p^k \leq y \\ 1 &: \ p^k > y \end{cases},  \ \ \ F_{j,l}(p^k) := \begin{cases} 1 &: \ p^k \leq y \\ F_j(p^k) &: \ p^k > y. \end{cases}
\end{equation*}
Given vectors $\mbf{a},\mbf{n} \in \mb{N}^k$ let $$h_{\mbf{a}}(\mbf{n}) := \prod_{1 \leq j \leq k} (\chi_j \cdot F_{j,s})\left(L_j(\mbf{n})/a_j\right)\left(L_j(\mbf{n})/a_j\right)^{it_j}1_{a_j|L_j(\mbf{n})}$$ (otherwise, set $h_{\mbf{a}}(\mbf{n}) = 0$). Note that $h_{\mbf{a}}(\mbf{n})$ is supported on vectors $\mbf{n}$ such that $a_j|L_j(\mbf{n})$ and $(L_j(\mbf{n})/a_j,q_j) = 1$ for each $j$. Thus,
\begin{align*}
\sum_{\mbf{n} \in \mc{B}(\mbf{x})} \prod_{1 \leq j \leq k} f_j(L_j(\mbf{n})) &= \sum_{\text{rad}(a_j)|q_j \forall j} \prod_{1 \leq j \leq k} f_j(a_j)\left(\sum_{\mbf{n} \in \mc{B}(\mbf{x}) \atop (L_j(\mbf{n})/a_j,q_j) = 1} \prod_{1 \leq j \leq k} f_j\left(L_j(\mbf{n})/a_j\right)1_{a_j|L_j(\mbf{n})}\right) \\
&= \sum_{\text{rad}(a_j)|q_j \forall j} \prod_{1 \leq j \leq k} f_j(a_j)\left(\sum_{\mbf{n} \in \mc{B}(\mbf{x})}h_{\mbf{a}}(\mbf{n})\prod_{1 \leq j \leq k} F_{j,l}\left(L_j(\mbf{n})/a_j\right)1_{a_j|L_j(\mbf{n})}\right) \\
&=: \lla \mbf{x}\rra\sum_{\text{rad}(a_j)|q_j \forall j} \prod_{1 \leq j \leq k} f_j(a_j)\mc{M}_{\mbf{a}}(\mbf{x};\mbf{f},\mbf{L}). 
\end{align*}
For each $j$ and $q_j$, define $R_{q_j}(m) := \max\{d|m : \rad(d)|q_j\}$. It is easy to see that $R_{q_j}$ is multiplicative. Thus, define $F_{j,l}^{\ast}(n) := F_{j,l}\left(\frac{n}{R_{q_j}(n)}\right)$, and note that this, too, is clearly multiplicative. Moreover, we have $h_{\mbf{a}}(\mbf{n}) \neq 0$ if, and only if, $a_j = R_{q_j}(L_j(\mbf{n}))$ and hence $F_{j,l}\left(L_j(\mbf{n})/a_j\right) = F_{j,l
}^{\ast}(L_j(\mbf{n}))$ in this case. Applying Lemma \ref{key} repeatedly, we thus have
\begin{align}
&\sum_{\text{rad}(a_j)|q_j \forall j} \prod_{1 \leq j \leq k} f_j(a_j)\mc{M}_{\mbf{a}}(\mbf{x};\mbf{f},\mbf{L}) = \sum_{\text{rad}(a_j)|q_j \forall j} \prod_{1 \leq j \leq k} f_j(a_j) \mathfrak{P}(F^{\ast}_{j,l};AX)\left(\langle \mbf{x}\rangle^{-1}\sum_{\mbf{n} \in \mc{B}(\mbf{x})} h_{\mbf{a}}(\mbf{n})\right) \\
&+ O\left(\sum_{1 \leq j \leq k} \left(\sum_{\text{rad}(a_j)|q_j} \frac{1}{\sqrt{a_j}}\right)\left(\mb{D}^{\ast}(f_j,\chi_jn^{it_j}; y,AX) + \frac{k}{(\log X)^{B'}}\right)\right) \nonumber\\
&= \left(1+O\left(\frac{k}{\log y}\right)\right)\left(\prod_{y < p \leq AX} M_p\left(\mbf{F},\mbf{L}\right) + O\left(y^{-1+o(1)}\right)\right)\sum_{\text{rad}(a_j)|q_j \forall j} \prod_{1 \leq j \leq k} f_j(a_j)\left(\langle \mbf{x}\rangle^{-1}\sum_{\mbf{n} \in \mc{B}(\mbf{x})} h_{\mbf{a}}(\mbf{n})\right) \\
&+ O\left(\sum_{1 \leq j \leq k} \left(\prod_{p|q_j}\left(1-\frac{1}{\sqrt{p}}\right)^{-1}\right)\left(\mb{D}^{\ast}\left(f_j,\chi_jn^{it_j};y, AX\right)+\frac{k}{(\log X)^{B'}}\right)\right), \label{NOW}
\end{align}
where in \eqref{NOW} we used Lemma \ref{LOCAL}, coupled with the fact that $F_{j,l}^{\ast}(p^k) = F_{j,l}(p^k)$ except for the prime divisors of $q_j$ which we assume are inferior to $y$. \\
For two vectors $\mbf{a}$ and $\mbf{b}$ of the same length we will write $\mbf{a} \preceq \mbf{b}$ to mean that $a_j \leq b_j$ for each $j$. Now, for $\mbf{z} \in \mc{B}(\mbf{x})$ and $\mbf{a}$ such that $\rad(a_j)|q_j$ for each $j$, set
\begin{equation*}
G_{\mbf{a}}(\mbf{z}) := \sum_{\mbf{n} \preceq \mbf{z}} \prod_{1 \leq j \leq k} \chi_j(L_j(\mbf{n})/a_j)F_{j,s}(L_j(\mbf{n})/a_j)1_{a_j|L_j(\mbf{n})},
\end{equation*}
so that by partial summation,
\begin{equation}
\sum_{\mbf{n} \in \mc{B}(\mbf{x})} h_{\mbf{a}}(\mbf{n}) = \left(\prod_{1 \leq j \leq k} a_j^{-it_j}\right)\int_{\mc{B}(\mbf{x})} \prod_{1 \leq j \leq k} L_{j}(\mbf{u})^{it_j} dG_{\mbf{a}}(\mbf{u}). \label{STIELTJES}
\end{equation}
Note that we can express $G_{\mbf{a}}(\mbf{z})$ as
\begin{align}
G_{\mbf{a}}(\mbf{z}) &= \sideset{}{^{\ast}}\sum_{u_1 (q_1)} \cdots \sideset{}{^{\ast}}\sum_{u_k (q_k)} \left(\prod_{1 \leq j \leq k} \chi_j(u_j)\right) \sum_{\mbf{n} \preceq \mbf{z} \atop L_j(\mbf{n)}/a_j \equiv u_j (q_j)\forall j} \prod_{1 \leq j \leq k} F_{j,s}(L_j(\mbf{n})/a_j) \nonumber\\
&=: \mathop{\sideset{}{^{\ast}}\sum_{u_1 (q_1)} \cdots \sideset{}{^{\ast}}\sum_{u_k (q_k)}}_{\exists \mbf{n} : L_j(\mbf{n})/a_j \equiv u_j (q_j) \forall j} \left(\prod_{1 \leq j \leq k} \chi_j(u_j)\right) R_{\mbf{a}}(\mbf{z};\mbf{u}). \label{ZEROTH}
\end{align}	
Define $g_{j,s} := \mu \ast F_{j,s}$, and let $Y := e^{3y}$. It follows by induction on $\nu$ that $g_{j,s}(p^{\nu+1}) = 0$ whenever $p^{\nu} > y$. By the prime number theorem, $\left(\prod_{p^k \leq y} p\right)^2 = e^{(2+o(1))y} \leq Y$, and thus all divisors in the support of $g_{j,s}$ are at most $Y$ when $y$ is sufficiently large. Let $\mbf{1} := (1,\ldots,1)$. By M\"{o}bius inversion,
\begin{align}
R_{\mbf{a}}(\mbf{z};\mbf{u}) 
&= \sum_{\mbf{d} \in \mc{B}(Y\mbf{1})\atop P^+(d_j) \leq y, (d_j,q_j) = 1 \ \forall \ j} \left(\prod_{1 \leq j \leq k} g_{j,s}(d_j)\right)\sum_{\mbf{n} \preceq \mbf{z} \atop a_jd_j |L_j(\mbf{n}), L_j(\mbf{n})/a_j \equiv u_j (q_j) \ \forall j} 1. \label{FIRST}
\end{align}
Let $S_{\mbf{a},\mbf{d}}(\mbf{L};\mbf{u},\mbf{v})$ denote the set of solutions to the $2k$ simultaneous congruences $L_j(\mbf{n})/a_j \equiv u_j (q_j)$, $L_j(\mbf{n})/a_j \equiv v_j (d_j)$ for all $1 \leq j \leq k$, and let $R_{\mbf{a},\mbf{d}}(\mbf{L}; \mbf{u},\mbf{v})$ denote the density of this set. Then
\begin{align}
R_{\mbf{a}}(\mbf{z};\mbf{u}) &= \lla \mbf{z}\rra \left(\sum_{\mbf{d} \in \mc{B}(Y\mbf{1}) \atop P^+(d_j) \leq y, (d_j,q_j) = 1 \ \forall j} \prod_{1 \leq j \leq k} g_{j,s}(d_j) R_{\mbf{a},\mbf{d}}(\mbf{L};\mbf{u},\mbf{0}) + O\left(\left(\sum_{1 \leq j \leq k} z_j^{-1}\right) \frac{\Psi(Y,y)^k}{[a_1,\ldots,a_k]}\right)\right), \label{NEXTONE}
\end{align}
It is easy to see that $R_{\mbf{a},\mbf{d}}(\mbf{L};\mbf{u},\mbf{0})$, given that it is non-zero, is independent of $\mbf{u}$. Indeed, note that for any $\mbf{r},\mbf{s} \in \mb{R}^l$,
\begin{align} 
L_j(\mbf{r}-\mbf{s}) &= L_j(\mbf{r}) - L_j(\mbf{s}) + L_j(\mbf{0}), \label{AFFINE1} \\
L_j(\mbf{r}+\mbf{s}) &= L_j(\mbf{r}) + L_j(\mbf{s}) - L_j(\mbf{0}). \label{AFFINE2}
\end{align}
This implies immediately that if there exists a vector $\mbf{n}$ such that $L_j(\mbf{n})/a_j \equiv u_j (q_j)$ and $L_j(\mbf{n})/a_j \equiv 0 (d_j)$ 
then for any such $\mbf{n}$ we have $S_{\mbf{a},\mbf{d}}(\mbf{L};\mbf{u},\mbf{0}) = S_{\mbf{a},\mbf{d}}(\mbf{L}-\mbf{L}(\mbf{0});\mbf{0},\mbf{0}) + \mbf{n}$ (where, for an Abelian group $G$ and a subset $S$ of $G$, we write $S + v := \{s+v : s \in S\}$ for $v \in G$). Using this remark in \eqref{NEXTONE}, inserting the latter into \eqref{ZEROTH} and
applying the bound $|g_{j,s}(d_j)| \leq \tau(d_j) \leq 2^{\pi(y)}$ for each $j$, it follows that
\begin{align*}
G_{\mbf{a}}(\mbf{z}) &= \lla \mbf{z} \rra \Xi_{\mbf{a}}(\mbf{\chi},\mbf{L})\sum_{\mbf{d} \in \mc{B}(Y\mbf{1}) \atop P^+(d_j) \leq y, (d_j,q_j) = 1 \ \forall j} \prod_{1 \leq j \leq k} g_{j,s}(d_j) R_{\mbf{a},\mbf{d}}(\mbf{L}-\mbf{L}(\mbf{0});\mbf{0},\mbf{0}) \\
&+ O\left(\lla \mbf{z}\rra 2^{k\pi(y)}\Psi(Y,y)^k(q_1 \cdots q_k)\sum_{1 \leq j \leq l} z_j^{-1}\right).
\end{align*}
By Lemma \ref{SMOOTH} there is a constant $C \leq 9/4$ such that $\Psi(Y,y) \leq e^{Cy/\log y}$, so we may replace the error term above by
$O\left(e^{\frac{3ky}{\log y}}(q_1\cdots q_k)E(\mbf{z})\right)$, where $E(\mbf{z}) := \lla \mbf{z}\rra  \left(\sum_{1 \leq j \leq l} z_j^{-1}\right)$.
The integral in \eqref{STIELTJES} takes the shape
\begin{align}
&\Xi_{\mbf{a}}(\mbf{\chi},\mbf{L}) \sum_{\mbf{d} \in \mc{B}(Y\mbf{1}) \atop P^+(d_j) \leq y \forall j}R_{\mbf{a},\mbf{d}}(\mbf{L}-\mbf{L}(\mbf{0});\mbf{0}, \mbf{0})\prod_{1 \leq j \leq k} g_{j,s}(d_j)\int_{\mc{B}(\mbf{x})} \prod_{1 \leq j \leq k} L_j(\mbf{u})^{it_j}d\mbf{u} \nonumber\\
&+ O\left(e^{\frac{3ky}{\log y}}(q_1\cdots q_k)\left|\int_{\mc{B}(\mbf{x})} \prod_{1 \leq j \leq k} L_j(\mbf{u})^{it_j}dE(\mbf{u}) \right|\right) \nonumber\\
&=: T_1 + e^{\frac{3ky}{\log y}}(q_1\cdots q_k)T_{2}. \label{TEXPS}
\end{align}
Now, rescaling the integral in $T_1$, we have
\begin{align}
\int_{\mc{B}(\mbf{x})} \left(\prod_{1 \leq j \leq k} L_j(\mbf{u})^{it_j}\right) d\mbf{u}  &= \langle \mbf{x}\rangle  \int_{\prod_{1 \leq s \leq l} [1/x_s,1]} \left(\prod_{1 \leq j \leq k}L_j((u_1x_1,\ldots,u_lx_l))^{it_j}\right) d\mbf{u} \nonumber\\
&= \left(1+O\left(\frac{lA}{x_-}\right)\right)\langle \mbf{x}\rangle \mc{I}(\mbf{x};\mbf{L},\mbf{t}), \label{ARCH}
\end{align}
whence that 
\begin{equation*}
T_1 = \left(1+O(lAx_-^{-1})\right)\langle \mbf{x}\rangle \Xi_{\mbf{a}}(\mbf{\chi},\mbf{L}) \mc{I}(\mbf{x};\mbf{L},\mbf{t})\sum_{\mbf{d} \in \mc{B}(Y\mbf{1}) \atop P^+(d_j) \leq y, (d_j,q_j) = 1 \forall j}R_{\mbf{a},\mbf{d}}(\mbf{L}-\mbf{L}(\mbf{0});\mbf{0},\mbf{0}) \prod_{1 \leq j \leq k} g_{j,s}(d_j).
\end{equation*}
We next consider $T_{2}$ as in \eqref{TEXPS}. Applying partial summation repeatedly, we can write it as
\begin{align}
\sum_{0 \leq m \leq l} (-1)^{l-m}\sum_{1 \leq j_1 < \cdots < j_m \leq l} \int_{u_{r_s} \leq x_{r_s} \forall s \atop r_s \neq j_v \forall s,v}du_{r_1} \cdots du_{r_{l-m}} \left[E_{\mbf{d}}(\mbf{u})\left(\prod_{1 \leq s \leq l-m} \frac{\partial}{\partial u_{r_s}} \right)\prod_{1 \leq j \leq k} L_j(\mbf{u})^{it_j}\right]_{u_{j_v} = 1 \atop \forall 1 \leq v \leq m}^{x_{j_v}}. \label{IBP}
\end{align}
Observe that if $L_j$ has a non-zero $u_r$ coefficient, say $c_{j,r}$,
\begin{equation} \label{DERIVBD}
\left|\frac{\partial}{\partial u_r} L_j(\mbf{u})^{it_j}\right| \leq |t_j|c_{j,r} L_j(\mbf{u})^{-1} \leq |t_j|u_{j_r}^{-1};
\end{equation}
otherwise, the $u_r$ partial derivative of $L_j^{it_j}$ is 0. Now fix $0 \leq m \leq l-1$ and a set of indices $1 \leq j_1 < \cdots < j_m \leq l$. Since the non-zero coefficients of $L_j(\mbf{u})$ are positive integers, taking further derivatives as in \eqref{DERIVBD} gives
\begin{align*}
&\left|\int_{u_{r_s} \leq x_{r_s} \forall s \atop r_s \neq j_v \forall s,v}du_{r_1} \cdots du_{r_{l-m}} \left[E_{\mbf{d}}(\mbf{u})\left(\prod_{1 \leq s \leq l-m} \frac{\partial}{\partial u_{r_s}} \right)\prod_{1 \leq j \leq k} L_j(\mbf{u})^{it_j}\right]_{u_{j_v} = 1\atop \forall 1 \leq v \leq m}^{x_{j_v}}\right| \\
&\ll_l \prod_{1 \leq j \leq k} \max\{1,|t_j|\}\int_{u_{r_s} \leq x_{r_s} \forall s \atop r_s \neq j_v \forall s,v}du_{r_1} \cdots du_{r_{l-m}}  \left[\left|E_{\mbf{d}}(\mbf{u})\right|\left(\prod_{1 \leq s \leq l-m} u_{r_s}^{-1}\right)\right]_{u_{j_v} = 1 \atop \forall 1 \leq v \leq m}^{x_{j_v}}.
\end{align*}
By the definition of $E_{\mbf{d}}(\mbf{z})$,
\begin{align*}
&\int_{u_{r_s} \leq x_{r_s} \forall s \atop r_s \neq j_v \forall s,v}du_{r_1} \cdots du_{r_{l-m}} \left[ \left|E_{\mbf{d}}(\mbf{u})\right|\left(\prod_{1 \leq s \leq l-m} u_{r_s}^{-1}\right)\right]_{u_{j_v} = 1 \atop \forall 1 \leq v \leq m}^{x_{j_v}}\\
 &\ll_m x_-^{-1}\left(\prod_{1 \leq v \leq m} x_{j_v}\right) \prod_{1 \leq s \leq l-m}\int_1^{x_{j_s}} \frac{du_{j_s}}{u_{j_s}} 
\ll_m x_-^{-1}\left(\prod_{1 \leq v \leq m} x_{j_v}\right)\prod_{1 \leq s \leq l-m} \left(\log x_{r_s}\right).
\end{align*}
These contributions are all smaller than the term with $m = l$, which is bounded by $\ll |E_{\mbf{d}}(\mbf{x})| \ll \lla \mbf{x} \rra x_-^{-1}$. Thus, summing over all $m$-tuples of distinct indices $j_v$ and all $m$, we get
\begin{equation*}
T_2 \ll_l\frac{\langle \mbf{x} \rangle}{x_-} \prod_{1 \leq j \leq k} \max\{1,|t_j|\}.
\end{equation*}
%
%
Thus, \eqref{STIELTJES} gives
\begin{align*}
\langle \mbf{x}\rangle^{-1} \sum_{\mbf{n} \in \mc{B}(\mbf{x})} h_{\mbf{a}}(\mbf{n}) &= \left(1+O\left(\frac{lA}{x_-}\right)\right)C_{\mbf{a}}(\mbf{x}; \mbf{L},\mbf{\chi}, \mbf{t})\left(\prod_{1 \leq j \leq k} a_j^{-it_j}\right) \mc{S}_{\mbf{a}}(Y,y;\mbf{f},\mbf{L}) \\
&+ O_{l}\left(\frac{1}{x_-}\frac{e^{\frac{3ky}{\log y}}}{[a_1,\ldots,a_k]}\prod_{1 \leq j \leq k} q_j\max\{1,|t_j|\}\right),
\end{align*}
where we put
\begin{align*}
\mc{S}_{\mbf{a}}(Y,y;\mbf{f},\mbf{L}) &:= \sum_{\mbf{d} \in \mc{B}_k(X\mbf{1}) \atop P^+(d_j) \leq y, (d_j,q_j) =1}R_{\mbf{a},\mbf{d}}(\mbf{L}-\mbf{L}(\mbf{0});\mbf{0}, \mbf{0})\prod_{1 \leq j \leq k} g_{j,s}(d_j).
\end{align*}
This coupled with \eqref{NOW} yields
\begin{align}
&\langle \mbf{x}\rangle^{-1} \sum_{\mbf{n} \in \mc{B}(\mbf{x})} \prod_{1 \leq j \leq k} f_j(L_j(\mbf{n})) \nonumber\\
&= \left(1+O_{k,l}\left(\frac{1}{\log y}\right)\right)\left(\sum_{\rad(a_j)|q_j \atop \forall 1 \leq j \leq k}\prod_{1 \leq j \leq k}\frac{f(a_j)}{a_j^{it_j}} C_{\mbf{a}}(\mbf{x};\mbf{L},\mbf{\chi},\mbf{t})\mc{S}_{\mbf{a}}(X,y; \mbf{f},\mbf{L})\right) \nonumber\\
&\cdot \left(\prod_{y < p \leq X} M_p(\mbf{F},\mbf{L})+ O\left(y^{-1+o(1)}\right)\right) +O(\mc{R}),\label{ALMOST}
\end{align}
where we have put
\begin{align*}
\mc{R} &:= \sum_{1 \leq j \leq k} \prod_{p|q_j} \left(1-\frac{1}{\sqrt{p}}\right)^{-1}\left(\mb{D}^{\ast}(f,\chi_jn^{it_j};y,AX) +\frac{1}{(\log X)^{B'}}\right) \\
&+ \frac{1}{x_-}\left(A + e^{\frac{3ky}{\log y}}(q_1 \cdots q_k)\left(\sum_{\text{rad}(a_j)|q_j \atop \forall 1 \leq j \leq k} [a_1,\ldots,a_k]^{-1}\right)\prod_{1 \leq j \leq k} \max\{1,|t_j|\}\right).
\end{align*}
We next apply Rankin's trick with $\delta = 1/2$ to show that
\begin{align*}
&\left|\left(\sum_{P^+(d_j) \leq y \atop (d_j,q_j) = 1 \forall j} - \sum_{\mbf{d} \in \mc{B}(Y\mbf{1}) \atop P^+(d_j) \leq y, (d_j,q_j) = 1\forall j}\right)R_{\mbf{a},\mbf{d}}(\mbf{L}-\mbf{L}(\mbf{0});\mbf{0}, \mbf{0})\prod_{1 \leq j \leq k} g_{j,s}(d_j) \right|\\
&\ll_k \sum_{d > Y \atop P^+(d) \leq y} \frac{\tau(d)}{d} \ll Y^{-\delta} \prod_{p \leq y} \left(1+\frac{2}{p^{1-\delta}}\right) \ll Y^{-\delta} \exp\left(2\sum_{p \leq y} p^{-1+\delta}\right) \\
&\ll e^{-(3\delta y - 2y^{\delta} \log_2 y)} \ll e^{-y}.
\end{align*}
Thus, we have
\begin{equation*}
S_{\mbf{a}}(Y,y;\mbf{f},\mbf{L}) = \sum_{P^+(d_j) \leq y \atop (d_j,q_j) = 1 \forall j} R_{\mbf{a},\mbf{d}}(\mbf{L}-\mbf{L}(\mbf{0});\mbf{0}, \mbf{0})\prod_{1 \leq j \leq k} g_{j,s}(d_j) + O_k\left(e^{-y}\right).
\end{equation*}
Moreover, replacing $g_{j,s}$ by $g_j = \mu \ast F_{j}$ here produces an error
\begin{align*}
&\left|\mc{S}_{\mbf{a}}(y;\mbf{f},\mbf{L})-\sum_{P^+(d_j) \leq y \atop (d_j,q_j) = 1 \forall j} R_{\mbf{a},\mbf{d}}(\mbf{L}-\mbf{L}(\mbf{0});\mbf{0}, \mbf{0})\prod_{1 \leq j \leq k} g_{j,s}(d_j)\right| \\
&\ll_k \sum_{P^+(d) \leq y \atop \exists p^{\nu} || d, \ p^{\nu} > y, \nu \geq 2}\frac{\tau(d)}{d} \ll \sum_{p^{\nu} > y \atop \nu \geq 2} \frac{1}{p^{\nu}} \sum_{P^+(d) \leq y} \frac{\tau(d)}{d} \\
&\ll y^{-\frac{1}{2}}\prod_{p \leq y} \left(1 + \frac{2}{p}\right) \ll \frac{(\log y)^2}{\sqrt{y}}.
\end{align*}
Thus, we have
\begin{equation*}
\mc{S}_{\mbf{a}}(Y,y;\mbf{f},\mbf{L}) 
= \mc{S}_{\mbf{a}}(y;\mbf{f},\mbf{L}) + O_k\left(\frac{(\log y)^2}{\sqrt{y}}\right),
\end{equation*}
which, combined with \eqref{ALMOST} completes the proof of Theorem \ref{MultAvg} in the general case. \\
Suppose now that $q_j = q$ for all $j$. We note first that by a simple calculation as in Lemma \ref{LOCAL},
\begin{equation*}
\prod_{p \leq y \atop p\nmid q} M_p(\mbf{f},\mbf{L}) = \sum_{P^+(d_j) \leq y \atop (d_j,q) = 1} R(d_1,\ldots,d_k)\prod_{1 \leq j \leq k} g_{j,s}(d_j),
\end{equation*}
where $R(d_1,\ldots,d_k)$ is the density of solutions in $\mb{N}^l$ to the simultaneous conditions $d_j|L_j(\mbf{n})$ for each $j$. Arguing as in the remarks surrounding \eqref{AFFINE1} and \eqref{AFFINE2}, $R(d_1,\ldots,d_k)$ is also the density corresponding to the shifted forms $L_j-L_j(\mbf{0})$. 
Now since $(q,d_j) = 1$ for all $j$, 
\begin{equation} \label{RDENSMULT}
R_{\mbf{a},\mbf{d}}(\mbf{L}-\mbf{L}(\mbf{0}),\mbf{0},\mbf{0}) = R([qa_1,a_1d_1],\cdots, [qa_k,a_kd_k]) = R(qa_1,\ldots,qa_k) R(d_1,\ldots,d_k)
\end{equation}
by multiplicativity. We thus have 
\begin{equation*}
S_{\mbf{a}}(y;\mbf{f},\mbf{L}) = R(qa_1,\cdots qa_k)\prod_{p \leq y \atop p \nmid q} M_p(\mbf{f},\mbf{L})
\end{equation*} 
whenever $\mbf{a}$ with $\rad(a_j) |q_j$ for each $j$, and Theorem \ref{MultAvg} follows as well in the special case $q_j = q$ for all $j$.
\section{Proof of Proposition~\ref{MultAvg}}
As mentioned in Section 2, we shall first make the following reduction, which is based on ideas of Green and Tao (see Theorem 7.1' and Appendix A of \cite{GT}).  For convenience, we write $\mb{Z}_N$ to mean $\mb{Z}/N\mb{Z}$.
\begin{lem} \label{REDUCE}
Let $A,k,l \geq 1$. Let $\mbf{L}$ be a primitive integral system of $k$ linear forms in $l$ variables and height at most $A$. Suppose that $f_1,\ldots,f_k: \mb{N} \ra \mb{C}$ are 1-bounded arithmetic functions such that $\min_{1 \leq j \leq k} \|f_j\|_{U^{k-1}(x)} \ra 0$ as $x \ra \infty$. Then 
\begin{equation*}
M(x;\mbf{f},\mbf{L}) \ll_{k,l,A} \min_{1 \leq j \leq k} \|f_j\|_{U^{k-1}(x)}^{\frac{1}{2}}.
\end{equation*}
Moreover, if $\mbf{L}$ is a system of \emph{linearly independent} forms then we can replace the $U^{k-1}$ norm on the right side by the $U^2$ norm.
\end{lem}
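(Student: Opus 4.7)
The approach, modelled on the transfer argument of Green--Tao, is to pass from the sum over $[x]^l$ to an average over a cyclic group $\mb{Z}_N$ where the Generalized von Neumann Inequality (Lemma \ref{GVNT}) applies, and then translate the resulting bound back using the normalization in the paper's definition of the interval Gowers norm. The first observation is that a primitive integral system of $k$ pairwise independent forms has Cauchy--Schwarz complexity at most $k-2$: in Definition \ref{CSCOMP}, place each of the remaining $k-1$ forms into its own singleton class, and pairwise linear independence ensures $L_j$ lies in no such span.

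I would choose a prime $N$ with $(lA+1)x < N \leq 2(lA+1)x$ (Bertrand's postulate), so that for every $\mbf{n} \in [x]^l$ and every $1 \leq j \leq k$ the value $L_j(\mbf{n})$ lies in $[1,N-1]$ and no wrap-around occurs modulo $N$. Setting $\tilde{f}_j := f_j \cdot 1_{[1,x]}$ on $\mb{Z}_N$, one has
\begin{equation*}
M(x;\mbf{f},\mbf{L}) = \left(\frac{N}{x}\right)^l \mb{E}_{\mbf{n} \in \mb{Z}_N^l}\, 1_{[x]^l}(\mbf{n}) \prod_{1 \leq j \leq k} \tilde{f}_j(L_j(\mbf{n})).
\end{equation*}
Absent the indicator $1_{[x]^l}$, Lemma \ref{GVNT} applied with $s = k-2$ would immediately yield the linear bound $\min_j \|\tilde{f}_j\|_{U^{k-1}(\mb{Z}_N)}$. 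To absorb $1_{[x]^l}$ I would apply Cauchy--Schwarz once in the $\mbf{n}$-average, separating the indicator from the product, which reduces matters to an average of $2k$ twisted copies of the $\tilde{f}_j$ (with conjugates) against the same indicator. This remaining average can then be controlled either by Fourier-expanding $1_{[x]^l}$ over $\mb{Z}_N^l$ and absorbing each character into a single $\tilde{f}_j$ via the modulation invariance of the Gowers norm, or by adjoining the $l$ coordinate projections as auxiliary forms in a CS-complexity argument. The output is a bound by $\min_j \|\tilde{f}_j\|_{U^{k-1}(\mb{Z}_N)}^{1/2}$, with implicit constants depending only on $k,l,A$; the square root originates precisely in this Cauchy--Schwarz step, which is the source of the $1/2$ exponent in the conclusion.

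To transfer back to the interval one uses
\begin{equation*}
\|f_j\|_{U^{k-1}(x)} = \frac{\|\tilde{f}_j\|_{U^{k-1}(\mb{Z}_N)}}{\|1_{[1,x]}\|_{U^{k-1}(\mb{Z}_N)}},
\end{equation*}
noting that $\|1_{[1,x]}\|_{U^{k-1}(\mb{Z}_N)} = \Theta_{k}((x/N)^{\alpha})$ for some $\alpha = \alpha(k) > 0$ and $N/x = \Theta_{l,A}(1)$, so the denominator is $\Theta_{k,l,A}(1)$. This yields the desired estimate. For the linearly independent case, one may place \emph{all} other $k-1$ forms into a single class in Definition \ref{CSCOMP}, so the CS complexity drops to $0$ and Lemma \ref{GVNT} delivers a $U^1$ bound; the stated $U^2$ estimate then follows a fortiori from the routine inequality $\|f\|_{U^1} \leq \|f\|_{U^2}$.

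The main obstacle throughout is cleanly disposing of the cutoff $1_{[x]^l}$: this is precisely where the square-root loss enters, and a sharper argument using a smooth (Fej\'er-type) cutoff combined with modulation invariance would remove the loss, but the weaker form stated is sufficient for the subsequent application of Proposition \ref{MultAvgNonPret}.
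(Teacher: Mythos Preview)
Your overall framework---embed into $\mb{Z}_N$, invoke the Generalized von Neumann inequality, then transfer back via the definition of the interval norm---matches the paper's. The divergence is at the crux, namely how one disposes of the cutoff $1_{[x]^l}$, and here your description has a gap.

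The paper does \emph{not} use Cauchy--Schwarz for this step. Instead it approximates $1_{[x]^l}$ in $L^1(\mb{Z}_N^l)$ by a Lipschitz function $\phi_N$ (with Lipschitz constant $\lambda \ll Z/N$), Fourier-expands $\phi_N$, and truncates the expansion to $z^l$ modes via an $l$-dimensional Fej\'er kernel. Each surviving mode contributes a single additive character $e(\mbf{m}\cdot\mbf{n}/N)$, which is adjoined as \emph{one} extra form $L_0(\mbf{n}) := \mbf{m}\cdot\mbf{n}$ (or, if $L_0$ lies in the $\mb{Q}$-span of $L_1,\ldots,L_k$, distributed among the $f_j$ by modulation invariance of $U^{k-1}$). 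One then obtains
\[
M(x;\mbf{f},\mbf{L}) \ll_{k,l,A} z^l \min_j \|f_j\|_{U^{k-1}(x)} + \lambda \frac{\log(z+1)}{z} + Z^{-1},
\]
and the exponent $\tfrac{1}{2}$ arises from choosing $z = \|f_{j_0}\|_{U^{k-1}(x)}^{-1/(2l)}$, i.e.\ from \emph{parameter balancing}, not from Cauchy--Schwarz.

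Your Cauchy--Schwarz step, as written, does not yield what you claim. Applying CS in the $\mbf{n}$-average with $a = 1_{[x]^l}(\mbf{n})$ and $b = \prod_j \tilde f_j(L_j(\mbf{n}))$ gives $(\mb{E}\,1_{[x]^l})^{1/2}\,(\mb{E}\,\prod_j |\tilde f_j(L_j)|^2)^{1/2}$; the second factor carries no multilinear structure on which Lemma~\ref{GVNT} bites, and certainly not ``$2k$ twisted copies against the same indicator.'' If instead you skip CS and Fourier-expand $1_{[x]^l}$ directly (your stated fallback), you incur the full $L^1$-mass $\sum_{\mbf{m}}|\widehat{1_{[x]^l}}(\mbf{m})| \asymp (\log N)^l$ rather than $z^l$, which gives a valid but different bound $(\log x)^l \min_j\|f_j\|_{U^{k-1}}$---not the square-root shape asserted.

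On the linearly independent case: your route through $U^1$ does not survive the argument, because when $L_0$ falls in the span of the $L_j$ one must absorb the character via modulation invariance, and $U^1$ is \emph{not} modulation-invariant. This is exactly why the paper records complexity $\leq 1$ (not $0$) after adjoining $L_0$ and lands directly on $U^2$.
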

\begin{proof}
Let $\rho > \rho' > lA$ and let $N$ be a large prime satisfying $\rho'x < N \leq \rho x$, with $\rho$ sufficiently large in terms of $\rho'$ (but bounded as $x \ra \infty$). Then
\begin{equation*}
M(x;\mbf{f}, \mbf{L}) = \left(\frac{N}{x}\right)^l N^{-l} \sum_{\mbf{n} \in \mb{Z}_N^l} \prod_{1 \leq j \leq k} f_j(L_j(\mbf{n})) 1_{[1,x]^l}(\mbf{n}).
\end{equation*}
We seek to apply Lemma \ref{GVNT}, and must hence remove the weight $1_{[1,x]^l}$. To accomplish this, we use the following harmonic analytic argument, due to Green and Tao (see Proposition 7.1' of \cite{GT}). Define a metric on $\mb{Z}_N^l$ by $$d(\mbf{m},\mbf{m}') := \left(\sum_{j \leq l} \left|\frac{m_j-m_j'}{N}\right|^2\right)^{\frac{1}{2}}.$$ Let $z,Z,\lambda > 0$ be parameters to be chosen. Let $\phi_N : \mb{Z}_N^l \ra \mb{C}$ be a bounded (independently of $N$) $d$-Lipschitz map with Lipschitz constant $\lambda$ such that $\|1_{[1,x]^l} - \phi_N\|_{L^{1}(\mb{Z}^l_N)} \ll_l N^l/Z$. It is shown in Corollary A.3 of \cite{GT} that $\lambda \ll Z/N$. Expanding $\phi_N$ as a Fourier series and convolving it with the $l$-dimensional F\'{e}jer kernel of length $z$, one can show that
\begin{equation*}
\phi_N(\mbf{n}) = \sum_{\mbf{m} \in \mb{Z}_N^l} a_{\mbf{m}} e\left(\frac{\mbf{m} \cdot \mbf{n}}{N}\right) = \sum_{\mbf{m} \in [z]^l} a_{\mbf{m}}' e\left(\frac{\mbf{m} \cdot \mbf{n}}{N}\right) + O_l\left(N^l \lambda\frac{\log(z+1)}{z}\right),
\end{equation*}
where $|a_{\mbf{m}}'| \ll 1$. Inserting this expansion into our expression for $M(x;\mbf{f},\mbf{L})$, splitting the two contributions and bounding the main term trivially gives
\begin{align*}
M(x;\mbf{f},\mbf{L}) &\leq \rho^l \left(N^{-l}\left|\sum_{\mbf{n} \in \mb{Z}_n^l} \phi_N(\mbf{n}) \prod_{1 \leq j \leq k} f_j(L_j(\mbf{n}))\right| + Z^{-1}\right) \\
&\ll_{\rho,l} N^{-l}\left|\sum_{\mbf{m} \in [z]^l} a_{\mbf{m}}\sum_{\mbf{n} \in \mb{Z}_n^l}e\left(\frac{\mbf{m} \cdot \mbf{n}}{N}\right)\prod_{1 \leq j \leq k} f_j(L_j(\mbf{n}))\right| + \lambda\frac{\log(z+1)}{z} + Z^{-1} \\
&\ll_{\rho,l} \left(\sum_{\mbf{m} \in [z]^l} |a_{\mbf{m}}|\right)\max_{\mbf{m} \in \mb{Z}_N^l} N^{-l}\left|\sum_{\mbf{n} \in \mb{Z}_N^l} e\left(\frac{\mbf{m}\cdot \mbf{n}}{N}\right) \prod_{1 \leq j \leq k} f_j(L_j(\mbf{n}))\right| + \lambda\frac{\log(z+1)}{z} + Z^{-1}\\
&\ll_{\rho,l} z^l\left(N^{-l} \left|\sum_{\mbf{n} \in \mb{Z}_N^l} \prod_{0 \leq j \leq k} f_j(L_j(\mbf{n}))\right|\right) + \lambda\frac{\log(z+1)}{z} + Z^{-1},
\end{align*}
where, letting $\mbf{m}_0$ be the index maximizing the multilinear average, we let $L_0(\mbf{n}) = \mbf{m_0} \cdot \mbf{n}$ and $f_0(n) := e\left(\frac{n}{N}\right)$. Now if $L_0 \notin \text{Span}_{\mb{Q}}\{L_1,\ldots,L_k\}$ then $\{L_0,\ldots,L_k\}$ still has Cauchy-Schwarz complexity $k-2$ so by Lemma \ref{GVNT},
\begin{equation} \label{GVNTAPP}
M(x;\mbf{f},\mbf{L}) \ll_{\rho,l} z^l\min_{1 \leq j \leq k} \|f_j\|_{U^{k-1}(\mb{Z}_N)} + \lambda\frac{\log(z+1)}{z} + Z^{-1}.
\end{equation}
On the other hand, if $L_0 = \sum_{1 \leq j \leq k} \alpha_j L_j$ with $\alpha_j \in \mb{Q}$ then \eqref{GVNTAPP} still holds with $f_j'(n) := f_j(n)e(\alpha_j n)$ in place of $f_j$. Since the $U^{k-1}(\mb{Z}_N)$ norm is invariant under multiplication by exponential phases (see (B.4) in \cite{GT}) we have $\|f_j'\|_{U^{k-1}(\mb{Z}_N)} = \|f_j\|_{U^{k-1}(\mb{Z}_N)}$, and thus as written \eqref{GVNTAPP} holds in this case as well. \\
By definition, $\|f_j\|_{U^{k-1}(\mb{Z}_N)} = \|f_j\|_{U^{k-1}(x)}\|1_{[1,x]}\|_{U^{k-1}(\mb{Z}_N)} \ll_{\rho,l} \|f_j\|_{U^{k-1}(x)}$. Hence
\begin{align*} 
M(x;\mbf{f},\mbf{L}) &\ll_{\rho,l} z^l\min_{1 \leq j \leq k} \|f_j\|_{U^{k-1}(x)} + \lambda \frac{\log(z+1)}{z} + Z^{-1} \\
&\leq z^l\|f_{j_0}\|_{U^{k-1}(x)} + \lambda\frac{\log(z+1)}{z} + Z^{-1}.
\end{align*}
Suppose that $1 \leq j_0 \leq k$ is the index of the function with minimal $U^{(k-1)}(x)$ norm as $x \ra \infty$. Taking $z := \|f_{j_0}\|_{U^{k-1}(x)}^{-\frac{1}{2l}}$ and $Z = N^{1/2}$ suffices to prove the first claim. \\
The second claim follows immediately from the fact that mutually linearly independent forms have Cauchy-Schwarz complexity at most 1 trivially.
\end{proof}
\begin{lem} \label{U2}
Suppose $f$ is a 1-bounded multiplicative function such that $\mc{D}(x) := \mc{D}(f;10x,(\log x)^{1/125}) \ra \infty$ as $x \ra \infty$. Then
for some absolute $c_1,c_2 > 0$, $$\|f\|_{U^2(x)} \ll e^{-c_1\mc{D}(x)} + (\log x)^{-c_2}.$$
\end{lem}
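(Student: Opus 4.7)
My plan is to reduce the $U^2$-norm to a maximal exponential-sum estimate via Plancherel, and then invoke a Hal\'{a}sz--Montgomery--Vaughan-type bound. First I would fix a prime $N$ with $x < N \le 2x$ and set $F := f \cdot \mathbf{1}_{[1,x]}$, viewed as a function on $\mathbb{Z}_N$. The standard Plancherel identity $\|F\|_{U^2(\mathbb{Z}_N)}^4 = N^{-4}\sum_{\xi\in\mathbb{Z}_N}|\hat{F}(\xi)|^4$, combined with H\"{o}lder's inequality and the Parseval bound $\sum_\xi|\hat{F}(\xi)|^2 = N\|F\|_{\ell^2}^2 \le Nx$, yields $\|F\|_{U^2(\mathbb{Z}_N)}^4 \le N^{-3}x\max_\xi|\hat{F}(\xi)|^2$. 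Dividing by $\|\mathbf{1}_{[1,x]}\|_{U^2(\mathbb{Z}_N)}^4 \asymp (x/N)^3$ (a direct count) gives the key Fourier reduction
\begin{equation*}
\|f\|_{U^2(x)}^4 \ll x^{-2}\max_{\alpha\in\mathbb{R}}\left|\sum_{n\le x}f(n)e(n\alpha)\right|^2.
\end{equation*}

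Next I would invoke a Hal\'{a}sz--Montgomery--Vaughan-type estimate for exponential sums of multiplicative functions, which asserts that for any $1$-bounded multiplicative $f$ there exist absolute constants $c_1',c_2'>0$ with
\begin{equation*}
\sup_{\alpha\in\mathbb{R}}\left|\sum_{n\le x}f(n)e(n\alpha)\right| \ll x\bigl(e^{-c_1'\mathcal{D}(x)} + (\log x)^{-c_2'}\bigr).
\end{equation*}
Substituting into the previous display and extracting a square root yields the lemma with $c_1 = c_1'/2$ and $c_2 = c_2'/2$.

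The principal obstacle is establishing the exponential-sum bound itself; it is \emph{not} a direct corollary of Theorem~\ref{ThmMRT}, which only controls the $\ell^1$-average $\sum_h|\sum_n f(n)\bar{f}(n+h)|$ of two-point correlations. Since $\|f\|_{U^2(x)}^4$ equals (up to normalization) the $\ell^2$-quantity $\sum_h|\sum_n f(n)\bar{f}(n+h)|^2$, simply bounding $|S_h|^2 \le x|S_h|$ loses a full power of $x$, so one genuinely needs pointwise control of a single maximal Fourier coefficient. I would proceed by Farey dissection of $\alpha$: on major arcs $|\alpha-a/q|\le 1/(qQ)$ with $q\le Q:=(\log x)^{1/125}$, the expansion of $e(na/q)$ in Dirichlet characters modulo $q$ via Gauss sums reduces the problem to classical Hal\'{a}sz estimates for $\sum_{n\le x}f(n)\bar{\chi}(n)n^{-it}$ with $\chi$ of conductor $\le Q$ and $|t|\le 10x$, which by definition are controlled by $\mathcal{D}(x)$; on minor arcs, a Vaughan-identity and Vinogradov-type argument provides the $(\log x)^{-c_2'}$ savings. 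The numerical value $1/125$ in the hypothesis is calibrated so that the major- and minor-arc exponents balance, which is the only place that specific exponent enters.
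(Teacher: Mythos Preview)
Your central claim---that the lemma is \emph{not} a direct corollary of Theorem~\ref{ThmMRT}---is mistaken, and this misconception is what drives you toward the longer exponential-sum argument. You identify $\|f\|_{U^2(x)}^4$ with an $\ell^2$-sum $\sum_h |S_h|^2$ where $S_h = \sum_n f(n)\bar f(n+h)$, and observe that bounding $|S_h|^2 \le x|S_h|$ loses a factor of $x$. But there is no need to group the variables that way. Writing
\[
\|f\|_{U^2(x)}^4 = x^{-3}\sum_{n_1,n_2,n_3 \le x} f(n_1)\bar f(n_1+n_2)\bar f(n_1+n_3)f(n_1+n_2+n_3),
\]
one simply takes the absolute value of the inner sum over $n_3$ alone, obtaining
\[
\|f\|_{U^2(x)}^4 \le x^{-3}\sum_{h_1,h_2 \le 2x}\Bigl|\sum_{n \le x}\bar f(n+h_1)f(n+h_2)\Bigr|
\]
after the change of variables $n=n_3$, $h_1=n_1$, $h_2=n_1+n_2$. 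This is exactly an $\ell^1$-average over \emph{two} shift parameters of a binary correlation, which is precisely what Theorem~\ref{ThmMRT} (with $k=2$, $m=2$, $A=1$) controls. The paper's proof is just these three lines.

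Your Fourier reduction $\|f\|_{U^2(x)}^4 \ll x^{-2}\max_\alpha\bigl|\sum_{n\le x}f(n)e(n\alpha)\bigr|^2$ is correct, and a major/minor arc argument of the kind you sketch could in principle be pushed through. But the minor-arc estimate for a general $1$-bounded multiplicative function is not the routine Vinogradov-type bound you suggest: unlike $\Lambda$, a general $f$ admits no bilinear decomposition via Vaughan's identity, and one instead needs a Daboussi--K\'atai or Bourgain--Sarnak--Ziegler type argument, whose quantitative form with a $(\log x)^{-c}$ saving is nontrivial and not established in your sketch. So your route is both more work and less self-contained than the one-step application of Theorem~\ref{ThmMRT} that you dismissed.
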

\begin{proof}
We have
\begin{align*}
\|f\|_{U^2(x)}^4 &= x^{-3}\sum_{1 \leq n_1,n_2,n_3 \leq x} f(n_1)\bar{f(n_1+n_2)f(n_1+n_3)}f(n_1+n_2+n_3) \\
&\leq x^{-3} \sum_{1 \leq n_1,n_2 \leq x} \left|\sum_{n_3 \leq x} \bar{f(n_1+n_3)}f(n_1+n_2+n_3)\right| \\
&\leq x^{-3}\sum_{1 \leq h_1,h_2 \leq 2x} \left|\sum_{n \leq x} \bar{f(n+h_1)}f(n+h_2)\right|,
\end{align*}
upon making the change of variables $n = n_1$, $h_1 = n_3$ and $h_2 = h_1 + n_2 \leq 2x$. Applying Theorem \ref{ThmMRT} with $H = 2x$ gives
\begin{equation*}
\|f\|_{U^2(x)}^4 \ll_{k,l,A} e^{-c_1\mc{D}_{j_0}(x)} + (\log x)^{-c_2},
\end{equation*}
and the claim follows with constants $c_1/4$ and $c_2/4$ in place of $c_1,c_2$.
\end{proof}
\begin{proof}[Proof of Proposition~\ref{MultAvgNonPret}]
Proposition~\ref{MultAvgNonPret} follows immediately upon combining Lemmata \ref{REDUCE} and \ref{U2}.
\end{proof}

\section{Sign Changes of Non-Pretentious Multiplicative Functions on 3- and 4-term Arithmetic Progressions}
In this section, we study the frequency with which a given multiplicative function $f: \mb{N} \ra \{-1,1\}$ yields a given sign pattern on 3- and 4-term arithmetic progressions. 
\subsection{Sign Patterns of Non-Pretentious Functions on Fixed 3- and 4-term APs}
In order to show that certain sign patterns exhibit positive upper density, it suffices to find a corresponding lower bound for the upper logarithmic density. Our method to do this relies on the remarkable result of Tao \cite{Tao} that establishes a logarithmically averaged version of Elliott's conjecture. A special case of his result is the following.
\begin{thm}[\cite{Tao}, Corollary 1.5]
Let $b_1,b_2$ be distinct, non-negative integers. Let $f_1,f_2: \mb{N} \ra \mb{C}$ be a 1-bounded multiplicative function such that for some $j \in \{1,2\}$, $\mc{D}(f_j;Ax,\infty) \ra \infty$ as $x \ra \infty$ for each $A \geq 1$. Then
\begin{equation*}
\sum_{n \leq x} \frac{f_1(n+b_1)f_2(n+b_2)}{n} = o(\log x).
\end{equation*}
\end{thm}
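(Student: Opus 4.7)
The plan is to follow Tao's proof of the logarithmically averaged two-point Chowla conjecture, whose key innovation is the \emph{entropy decrement argument}. By translating the variable $n$, we may reduce to the case $b_1 = 0$, $b_2 = h$ with $h = b_2 - b_1 \ge 1$, and by relabelling we may assume it is $f_1$ that satisfies the non-pretentiousness hypothesis $\mc{D}(f_1; Ax, \infty) \to \infty$ for every $A \ge 1$.

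The first ingredient is a \emph{dilation identity}. For each prime $p$ in a slowly growing dyadic window $(w, 2w]$, changing variables $n \mapsto pn$ in the logarithmically averaged sum and using multiplicativity of $f_1, f_2$, one relates
\[
\frac{1}{\log x}\sum_{n \le x}\frac{f_1(n)f_2(n+h)}{n} \quad \text{and} \quad \frac{f_1(p)f_2(p)}{\log x}\sum_{m \le x/p}\frac{f_1(m)f_2(m + h')}{m},
\]
where $h' = h/p$ when $p\mid h$ and a more delicate residue-class decomposition is required otherwise. The point is that logarithmic averages are approximately dilation-invariant, so both sides are essentially equal. The error terms coming from the coprimality conditions $(p,m) = 1$ and from boundary effects at $x/p$ are controlled by the Matom\"{a}ki--Radziwi\l{}\l{} theorem on mean values of multiplicative functions in short intervals, whose hypothesis is exactly the uniform non-pretentiousness assumption on $f_1$.

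The core step is Tao's entropy decrement argument. Averaging the dilation identities over primes $p \in (w, 2w]$ with Mertens-type weights $1/p$, and considering the Shannon entropy of the joint distribution of the random vector $(f_1(n+a), f_2(n+a+h))_{a \in A}$ for a carefully constructed shift set $A$ (with $n$ sampled log-uniformly from $[1,x]$), one shows that if the original correlation sum were bounded away from zero then each iteration of the dilation identity would strictly decrease the entropy per unit length by a definite positive amount. Since entropy is nonnegative, the iteration must terminate after a bounded number of steps, producing a contradiction with the assumption that the correlation sum is large on a sequence of scales $x_k \to \infty$.

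The main obstacle is the entropy decrement itself: one must quantify the information-theoretic gain at each step and verify that it strictly dominates the accumulated errors from the Matom\"{a}ki--Radziwi\l{}\l{} input, \emph{uniformly} as the scale parameter $w$ tends to infinity along a suitable sequence. A secondary technical difficulty is that the non-pretentiousness hypothesis is only a scale-by-scale statement, so one must track how the pretentious distance $\mc{D}(f_1; \cdot, \cdot)$ behaves across the iterated dilation — this is exactly why the hypothesis is formulated uniformly in $A \ge 1$ in the statement of the theorem.
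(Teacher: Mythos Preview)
The paper does not prove this theorem at all: it is quoted verbatim as Corollary~1.5 of Tao's paper on the logarithmically averaged Chowla and Elliott conjectures and then used as a black box in the proof of Lemma~\ref{RED} and Theorem~\ref{LOGDENSSPEC}. So there is no ``paper's own proof'' to compare against; your proposal is in effect a sketch of Tao's original argument rather than anything the present authors supply.

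As a sketch of Tao's proof, your outline captures the two essential inputs --- the Matom\"aki--Radziwi\l\l\ short-interval theorem and the entropy decrement argument --- but several details are garbled. The dilation step does not produce a correlation with shift $h' = h/p$; rather one inserts the approximate identity $p\,1_{p\mid n} \approx 1$ (in logarithmic average), changes variables $n = pm$, and uses multiplicativity to pull out $f_1(p)f_2(p)$, obtaining a correlation at shift $ph$ restricted to a residue class modulo $p$. The entropy argument is then not about the entropy of a single random vector decreasing under iteration, but about the \emph{mutual information} between the sign pattern $(f_1(n+j), f_2(n+j+h))_{j \le J}$ and the residue $n \bmod p$: if the correlation is large, these are approximately independent for many primes $p$ simultaneously, and a pigeonhole on the total entropy forces the existence of a scale at which the Matom\"aki--Radziwi\l\l\ input yields a contradiction. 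Your description of ``iteration terminating after boundedly many steps'' does not match the actual mechanism. None of this matters for the present paper, which only invokes the statement.
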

We shall take advantage of this result and the unimodularity of $f$ to establish statements about correlations of $f$ with three or four translates of itself. We use the following basic device to this end.
\begin{lem} \label{BASIC}
For $n \geq 1$ let $a_1,\ldots,a_n,b_1,\ldots,b_n \in \mb{C}$ have norm uniformly bounded above by $X$. Let $w_1,\ldots,w_n\in (0,\infty)$ and put $H := \sum_{1 \leq j \leq n} w_j$. Let $A := H^{-1}\left|\sum_{1 \leq j \leq n} w_ja_j\right|$ and $B := H^{-1}\left|\sum_{1 \leq j \leq n} w_jb_j\right|$. Then
\begin{equation*}
\text{Re}\left(\sum_{1 \leq j \leq n} w_ja_j\bar{b_j}\right) \geq \left(\frac{1}{2}(A+B)^2 - X\right)H.
\end{equation*}
\end{lem}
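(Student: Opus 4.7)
Writing $\mu_a := H^{-1}\sum_j w_j a_j$ and $\mu_b := H^{-1}\sum_j w_j b_j$, so that $|\mu_a| = A$ and $|\mu_b| = B$, I would exploit the elementary centering identity
\begin{equation*}
\sum_j w_j a_j \bar{b_j} \;=\; H\,\mu_a\,\bar{\mu_b} \;+\; \sum_j w_j(a_j - \mu_a)\bar{(b_j - \mu_b)},
\end{equation*}
which follows on expanding the right-hand side and using that $\sum_j w_j(a_j - \mu_a) = 0 = \sum_j w_j(b_j - \mu_b)$. This decomposes the quantity of interest into a ``mean-product'' term and a ``covariance'' term, to be bounded separately.

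For the mean-product, I would multiply $a_j$ and $b_j$ through by unit-modulus factors chosen to align $\mu_a$ and $\mu_b$ on the positive real axis; this preserves $|\sum_j w_j a_j \bar{b_j}|$ and can be arranged so as not to decrease $\text{Re}\sum_j w_j a_j\bar{b_j}$, giving $\text{Re}(\mu_a \bar{\mu_b}) = AB$ under this normalization. For the covariance, I would apply Cauchy--Schwarz,
\begin{equation*}
\Bigl|\sum_j w_j(a_j-\mu_a)\bar{(b_j-\mu_b)}\Bigr| \;\leq\; \Bigl(\sum_j w_j|a_j-\mu_a|^2\Bigr)^{1/2}\Bigl(\sum_j w_j|b_j-\mu_b|^2\Bigr)^{1/2},
\end{equation*}
and then invoke the variance identity $\sum_j w_j|a_j-\mu_a|^2 = \sum_j w_j|a_j|^2 - H A^2 \leq H(X-A^2)$, where the norm hypothesis is used in the form $|a_j|^2 \leq X$ (and analogously for $b_j$). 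The AM--GM inequality then yields $\sqrt{(X-A^2)(X-B^2)} \leq X - \tfrac12(A^2+B^2)$.

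Combining the two bounds, I would conclude
\begin{equation*}
\text{Re}\sum_j w_j a_j \bar{b_j} \;\geq\; H\,AB \;-\; H\bigl(X - \tfrac12(A^2+B^2)\bigr) \;=\; H\bigl(\tfrac12(A+B)^2 - X\bigr),
\end{equation*}
which is the desired inequality. The main subtlety is the phase-alignment step that converts $|\mu_a\bar{\mu_b}| = AB$ into $\text{Re}(\mu_a\bar{\mu_b}) = AB$; this is innocuous in the intended application to sign patterns of a real-valued $f:\mb{N}\to\{-1,1\}$, where $a_j$ and $b_j$ arise as real quantities and any signs of their means can be absorbed into the fixed pattern $\mbf{\e}$ at the outset. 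The remaining steps are routine Cauchy--Schwarz and AM--GM manipulations.
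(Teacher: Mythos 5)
Your argument is correct (with the same phase--alignment caveat as the paper's own proof, which you rightly flag) and takes a genuinely different route. The paper polarizes: $2\text{Re}(a_j\bar{b_j}) = |a_j+b_j|^2 - |a_j|^2 - |b_j|^2 \geq |a_j+b_j|^2 - 2X$, then applies Cauchy--Schwarz once in the form $\sum_j w_j|a_j+b_j|^2 \geq H^{-1}\left|\sum_j w_j(a_j+b_j)\right|^2$; after rotating the two weighted sums onto a common direction, $\left|\sum_j w_j(a_j+b_j)\right| = (A+B)H$ and the bound is immediate. You instead use a bias--variance decomposition, bound the covariance term by Cauchy--Schwarz together with the variance identity, handle the mean--product term by alignment, and finish with AM--GM. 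Your intermediate estimate $HAB - H\sqrt{(X-A^2)(X-B^2)}$ is marginally sharper than the paper's; the two coincide after AM--GM. Both routes rely on the same rotation step, which is not a genuine without-loss-of-generality for arbitrary complex sequences: rotating $a_j \mapsto e^{i\alpha}a_j$ and $b_j \mapsto e^{i\beta}b_j$ multiplies $\sum_j w_j a_j\bar{b_j}$ by $e^{i(\alpha-\beta)}$, and indeed $n=1$, $a_1=1$, $b_1=i$, $X=1$ violates the stated inequality. You correctly identify this as the main subtlety. One small correction to your closing remark: the alignment is harmless in Lemma~\ref{RED} not because signs can be absorbed into $\mbf{\e}$, but because there $b_n = \bar{a_{n+d}}$ with $d = o(x)$, so $\mu_a$ and $\mu_b$ are nearly equal real numbers of the same sign whenever the bound is nontrivial, and the rotation is the identity. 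Finally, note that both proofs actually require $|a_j|^2, |b_j|^2 \leq X$ rather than $|a_j|, |b_j| \leq X$; this matches your reading of the hypothesis, and is innocuous in the application since $X = 1$ there.
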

\begin{proof}
Rotating the sums $\sum_{1 \leq j \leq n} w_ja_j$ and $\sum_{1 \leq j \leq n} w_jb_j$, we may assume without loss of generality that they point in the same direction, say $e(\theta)$. As such, by Cauchy-Schwarz,
\begin{align*}
\text{Re}\left(\sum_{1 \leq j \leq n} w_ja_j\bar{b_j}\right) &\geq \frac{1}{2}\sum_{1 \leq j \leq n} w_j\left(|a_j+b_j|^2 -2X\right) \geq \frac{1}{2H}\left(\left|\sum_{1 \leq j \leq n} w_j(a_j+b_j)\right|^2 - 2XH\right) \\
&= H\left(\frac{1}{2}\left|(A+B)e\left(\theta\right)\right|^2 -X\right),
\end{align*}
as claimed.
\end{proof}
A consequence of Lemma \ref{BASIC} is the following, which gives us a criterion to determine whether or not a multiplicative function is pretentious based on its 4-term correlations.
\begin{lem} \label{RED}
Let $x,d \geq 1$, with $d \in \mb{N}$ and $d = o(x)$. Let $f$ be a unimodular multiplicative function, and let $\delta > 0$. If $$\frac{1}{\log x} \sum_{n \leq x} \frac{f(n)f(n+d)f(n+2d)f(n+3d)}{n} > \frac{1}{\sqrt{2}}+\delta$$ as $x \ra \infty$ then there is a primitive Dirichlet character $\chi$ of conductor $q$ and a real number $t \in \mb{R}$ such that $\mb{D}(f,\chi n^{it};\infty) < \infty$.
\end{lem}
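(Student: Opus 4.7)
The plan is to prove the contrapositive: assume $f$ is non-pretentious, so that $\mb{D}(f,\chi n^{it};\infty)=\infty$ for every primitive Dirichlet character $\chi$ and every $t\in\mb{R}$, and deduce that $|\mc{F}|\le 1/\sqrt{2}+o(1)$ as $x\to\infty$, where $\mc{F}$ denotes the logarithmic $4$-correlation in the statement; this will contradict the hypothesis $\mc{F}>1/\sqrt{2}+\delta$. Since $\bar f$ is also non-pretentious, Tao's Corollary~1.5 (quoted just above the lemma), applied to the multiplicative pair $(f_1,f_2)=(f,\bar f)$ with distinct shifts $0$ and $4d$, gives
\[
\frac{1}{\log x}\sum_{n\le x}\frac{f(n)\bar{f(n+4d)}}{n}=o(1) \qquad \text{as } x\to\infty.
\]

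The crux is to apply Lemma~\ref{BASIC} with weights $w_n:=1/n$ on $1\le n\le x$, bound $X=1$, and the unimodular sequences
\[
a_n:=f(n)f(n+d)f(n+2d)f(n+3d), \qquad b_n:=f(n+d)f(n+2d)f(n+3d)f(n+4d).
\]
Using $|f(n+jd)|=1$, the overlapping factors cancel and one obtains $a_n\bar{b_n}=f(n)\bar{f(n+4d)}$, whose weighted sum has modulus $o(\log x)$ by the display above. With $H:=\sum_{n\le x}(1/n)=\log x+O(1)$, the quantities $A:=H^{-1}|\sum_n w_n a_n|$ and $B:=H^{-1}|\sum_n w_n b_n|$ appearing in Lemma~\ref{BASIC} both equal $|\mc{F}|+o(1)$: this is immediate for $A$, while for $B$ the substitution $m=n+d$ and the expansion $1/(m-d)=1/m+O(d/m^2)$ produce a perturbation of order $O((\log d+d/x)/\log x)=o(1)$ in the intended regime where $d$ grows slowly relative to $x$ (for instance $d=x^{o(1)}$).

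Feeding these values into Lemma~\ref{BASIC} then yields
\[
o(1) \;=\; H^{-1}\Bigl|\sum_{n\le x} w_n f(n)\bar{f(n+4d)}\Bigr| \;\ge\; \tfrac{1}{2}\bigl(2|\mc{F}|\bigr)^{2}-1+o(1) \;=\; 2|\mc{F}|^{2}-1+o(1),
\]
so $|\mc{F}|^{2}\le 1/2+o(1)$, hence $|\mc{F}|\le 1/\sqrt{2}+o(1)$, contradicting the hypothesis. The main conceptual point is that pairing the $4$-correlation $a_n$ with its own shift $b_n=a_{n+d}$ turns the ``cross term'' $a_n\bar{b_n}$ into a plain $2$-correlation via the cancellation $|f(n+jd)|^{2}=1$; this is forced to be $o(1)$ by Tao's theorem, and Lemma~\ref{BASIC} then extracts the sharp constant $1/\sqrt{2}$ from the factor $\tfrac{1}{2}(A+B)^{2}$ with $A\approx B\approx|\mc{F}|$. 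The only routine technical step is the shift-invariance of the logarithmic mean, which could also be handled by using a smoothed logarithmic weight in place of $1/n$ throughout.
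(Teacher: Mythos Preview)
Your proof is correct and follows essentially the same route as the paper's: both apply Lemma~\ref{BASIC} with $w_n=1/n$, $a_n=f(n)f(n+d)f(n+2d)f(n+3d)$ and $b_n$ the shift of $a_n$ by $d$, so that $a_n\bar{b_n}=f(n)\bar{f(n+4d)}$ collapses to a binary correlation governed by Tao's theorem; you simply phrase it as a contrapositive whereas the paper derives the lower bound on the binary correlation directly and then invokes Tao. Your choice $b_n=a_{n+d}$ (rather than the paper's $b_n=\bar{a_{n+d}}$) is in fact the cleaner one for general unimodular $f$, and your explicit acknowledgement that the shift-invariance step $B=|\mc{F}|+o(1)$ needs $\log d=o(\log x)$ (e.g.\ $d=x^{o(1)}$) rather than merely $d=o(x)$ is a fair caveat that the paper glosses over as well; it is harmless for the intended application, where $d$ is fixed.
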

\begin{proof}
We apply Lemma \ref{BASIC} with $w_n := 1/n$, $a_n := f(n)f(n+d)f(n+2d)f(n+3d)$ and $b_n := \bar{a_{n+d}}$ for each $n \leq x$. Clearly, $a_n\bar{b_n} = f(n)\bar{f(n+4d)}$, and as $d= o(x)$,
\begin{equation*}
A+B = (2+o(1))\frac{1}{\log x}\left|\sum_{n \leq x} \frac{f(n)f(n+d)f(n+2d)f(n+3d)}{n}\right|.
\end{equation*}
By Lemma \ref{BASIC},
\begin{equation*}
\text{Re}\left(\sum_{n \leq x} \frac{f(n)\bar{f(n+4d)}}{n}\right) \geq (2+o(1))\left|\sum_{n \leq x} \frac{f(n)f(n+d)f(n+2d)f(n+3d)}{n}\right|^2 - \log x +O(1).
\end{equation*}
By assumption, it follows that 
\begin{equation*}
\text{Re}\left(\sum_{n \leq x} \frac{f(n)\bar{f(n+4d)}}{n}\right) \gg_{\delta} \log x.
\end{equation*}
The conclusion now follows from Theorem 5.1 with $f_1 = f$, $f_2 = \bar{f}$.
\end{proof}
Our next lemma is a trivial observation showing that the cardinality of the set of $n \leq x$ yielding a fixed sign pattern of a given length can be expressed as a correlation of multiplicative functions.
\begin{lem} \label{TRIVIAL}
Let $l \geq 1$, and let $\mbf{\e} \in \{-1,1\}^l$. Let $f : \mb{N} \ra \{-1,1\}$ and $g : (0,\infty) \ra \mb{R}$, and put $$S_{\mbf{\e}} := \{n \in \mb{N} : f(n+jd) = \e_j \text{ for all }0 \leq j \leq l-1\}.$$ Then 
\begin{equation*}
\sum_{n \leq x \atop n \in S_{\mbf{\e}}} g(n) = 2^{-l} \sum_{n \leq x} g(n) \prod_{0 \leq j \leq l-1} \left(1+\e_j f(n+jd)\right).
\end{equation*}
\end{lem}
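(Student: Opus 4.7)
The statement is an elementary identity, so the proof is essentially a pointwise observation about indicator functions of sign patterns. The plan is to verify that the product on the right-hand side is a scaled indicator of the set $S_{\mbf{\e}}$.

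First I would observe that since $f$ takes values in $\{-1,1\}$ and $\e_j \in \{-1,1\}$, the product $\e_j f(n+jd)$ also lies in $\{-1,1\}$. Consequently, for each fixed $n$ and $j$, the factor $1 + \e_j f(n+jd)$ equals $2$ if $f(n+jd) = \e_j$ and equals $0$ otherwise. Thus the single-factor identity
\[
\tfrac{1}{2}\bigl(1 + \e_j f(n+jd)\bigr) = \mathbf{1}_{\{f(n+jd) = \e_j\}}
\]
holds for every $n$ and every $0 \leq j \leq l-1$.

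Next I would multiply these $l$ identities over $j = 0, 1, \ldots, l-1$. Since the product of the indicators on the right equals $\mathbf{1}_{\{n \in S_{\mbf{\e}}\}}$, we obtain
\[
2^{-l}\prod_{0 \leq j \leq l-1}\bigl(1 + \e_j f(n+jd)\bigr) = \mathbf{1}_{\{n \in S_{\mbf{\e}}\}}.
\]
Multiplying through by $g(n)$ and summing over $n \leq x$ yields the claimed identity. There is no genuine obstacle here; the only thing to be careful about is that the expansion is valid pointwise, which is automatic because $f$ is $\{-1,1\}$-valued (so squaring $\e_j f(n+jd)$ gives $1$ and the two-point formula is exact, not merely approximate).
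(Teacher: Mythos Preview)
Your proof is correct and is essentially identical to the paper's own argument: both observe pointwise that each factor $1+\e_j f(n+jd)$ equals $2$ when $f(n+jd)=\e_j$ and $0$ otherwise, so the product acts as $2^l$ times the indicator of $S_{\mbf{\e}}$.
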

\begin{proof}
If $n \notin S_{\mbf{\e}}$ then for some $0 \leq j \leq l-1$, $1+\e_jf(n+jd) = 0$, so such terms contribute nothing. Conversely, when $n \in S_{\mbf{\e}}$ then $1+\e_jf(n+jd) = 2$ for all $0 \leq j \leq l-1$, and the product is then $2^l$. This implies the claim.
\end{proof}
With these results in hand, we will establish Theorem \ref{LOGDENSSPEC}. 
\begin{proof}[Proof of Theorem \ref{LOGDENSSPEC}]
We will only prove ii). By a similar argument one can establish i) as well, and we leave the details of this to the reader. \\
Let $S_{\pm \mbf{\e}} := S_{\mbf{\e}} \cup S_{-\mbf{\e}}$. 
Write $L_{\pm \mbf{\e}}(x) := \sum_{n \leq x \atop n \in S_{\pm \mbf{\e}}} \frac{1}{n}$. Applying Lemma \ref{TRIVIAL} twice with $g(n) := \frac{1}{n}$ for all $n \in \mb{N}$,
\begin{align*}
L_{\pm \mbf{\e}}(x) &= \frac{1}{16} \sum_{n \leq x}\frac{1}{n}\left(\prod_{0 \leq j \leq 3}\left(1+\e_jf(n+jd)\right) + \prod_{0 \leq j \leq 3}\left(1-\e_jf(n+jd)\right)\right) \\
&= \frac{1}{16}\left(\sum_{S \subseteq \{0,1,2,3\}} \left(1+(-1)^{|S|}\right)\sum_{n \leq x} \frac{1}{n}\prod_{j \in S} \e_j f(n+jd)\right)\\
&= \frac{1}{8}\left(\log x + \sum_{S \subseteq \{0,1,2,3\} \atop |S| = 2} \sum_{n \leq x} \frac{1}{n}\prod_{j \in S} \e_j f(n+jd) + \e_0\e_1\e_2\e_3\sum_{n \leq x} \frac{\prod_{0 \leq j \leq 3}f(n+jd)}{n}\right).
\end{align*}
By Theorem 1.3 of \cite{Tao}, each of the six 2-element subsets $S$ of $\{0,1,2,3\}$ gives rise to $$\sum_{n \leq x} \frac{1}{n}\prod_{j \in S} f(n+jd) = o(\log x).$$ Also, by Lemma \ref{RED}, we must have
\begin{equation*}
\liminf_{x \ra \infty} \left|\frac{1}{\log x}\sum_{n \leq x} \frac{f(n)f(n+d)f(n+2d)f(n+3d)}{n}\right|\leq \frac{1}{\sqrt{2}}.
\end{equation*}
As such, we have
\begin{align*}
\limsup_{x \ra \infty} \frac{L_{\pm \mbf{\e}}(x)}{\log x}  &\geq \frac{1}{8}\left(1-\liminf_{x \ra \infty} \left|\frac{1}{\log x} \sum_{n \leq x} \frac{f(n)f(n+d)f(n+2d)f(n+3d)}{n}\right|\right) \\
&\geq \frac{1}{8}-\frac{1}{8\sqrt{2}}> \frac{1}{28}.
\end{align*}
This establishes the claim. 
\end{proof}
By a similar argument, we can show the following. The details are left to the reader.
\begin{prop}
For any $d \geq 1$, any non-pretentious function $f : \mb{N} \ra \{-1,1\}$ and any $\mbf{\e} \in \{-1,1\}^3$, the upper logarithmic density of the set of $n$ such that $f(n+jd) = \e_j$ for $j = 0,1$ and $2$ is at least $\frac{1}{28}.$
\end{prop}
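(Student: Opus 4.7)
The plan is to mimic the proof of Proposition \ref{LOGDENSSPEC}(ii) but working with length $3$ patterns, which forces us to control the sign of the $3$-point correlation via a length-$3$ analogue of Lemma \ref{RED}.

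First, write $L_{\mbf{\e}}(x) := \sum_{n \le x,\ n \in S_{\mbf{\e}}} \tfrac{1}{n}$ and apply Lemma \ref{TRIVIAL} with $g(n) = 1/n$ to obtain
\[
L_{\mbf{\e}}(x) = \frac{1}{8}\sum_{S \subseteq \{0,1,2\}} \Bigl(\prod_{j \in S}\e_j\Bigr)\sum_{n \le x}\frac{1}{n}\prod_{j \in S} f(n+jd).
\]
The $S = \emptyset$ term contributes $\frac{1}{8}\log x + O(1)$. For $|S| = 1$, a non-pretentious $f$ satisfies $\sum_{n \le x} f(n)/n = o(\log x)$ by Hal\'asz and partial summation (shifting by the fixed $jd$ changes this by $O(1)$). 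For $|S| = 2$, Theorem 1.3 of \cite{Tao} (the logarithmically averaged binary Chowla conjecture for non-pretentious $f$, already invoked in the proof of Proposition \ref{LOGDENSSPEC}(ii)) yields $o(\log x)$.

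The core step is to bound the $3$-point correlation $\Sigma_3(x) := \sum_{n \le x} f(n)f(n+d)f(n+2d)/n$. The approach is the length-$3$ analogue of Lemma \ref{RED}: apply Lemma \ref{BASIC} with weights $w_n = 1/n$ and
\[
a_n = f(n)f(n+d)f(n+2d),\qquad b_n = a_{n+d} = f(n+d)f(n+2d)f(n+3d),
\]
so that $a_n\bar{b_n} = f(n)f(n+3d)$ (using $f^2 \equiv 1$). A routine calculation using $\sum_{n \le x} a_{n+d}/n = \sum_{n \le x} a_n/n + O(1)$ (since $d$ is fixed) shows $A = B + o(1)$, where $A := |\Sigma_3(x)|/\log x$. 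Lemma \ref{BASIC} then gives
\[
\mathrm{Re}\Bigl(\sum_{n \le x}\frac{f(n)f(n+3d)}{n}\Bigr) \ge (2A^2 - 1)\log x + O(1).
\]
Since the left side is $o(\log x)$ by Theorem 5.1 (or again by Tao's theorem), we conclude $\liminf_{x \to \infty} A \le 1/\sqrt{2}$.

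Combining these ingredients,
\[
\limsup_{x \to \infty}\frac{L_{\mbf{\e}}(x)}{\log x} \ge \frac{1}{8}\Bigl(1 - \liminf_{x \to \infty}\frac{|\Sigma_3(x)|}{\log x}\Bigr) \ge \frac{1}{8}\Bigl(1 - \frac{1}{\sqrt{2}}\Bigr) = \frac{2-\sqrt{2}}{16} > \frac{1}{28},
\]
the last inequality holding since $28(2-\sqrt{2}) > 16$. The main obstacle is step (the analogue of Lemma \ref{RED}): one must choose the pairing $(a_n, b_n)$ so that $a_n\bar{b_n}$ collapses to a $2$-point correlation that Tao's theorem can kill, and verify that shifting the argument of a logarithmic sum by a bounded amount introduces only $O(1)$ error, which is standard. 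Everything else is bookkeeping on the $2^3 = 8$ subset contributions.
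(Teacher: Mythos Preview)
Your proposal is correct and follows precisely the approach the paper intends: it leaves this proposition to the reader as the length-$3$ analogue of the proof of Proposition \ref{LOGDENSSPEC}(ii), and your write-up supplies exactly those details---expanding via Lemma \ref{TRIVIAL}, killing the $|S|=1$ terms by Hal\'asz/Wirsing plus partial summation, the $|S|=2$ terms by Tao's theorem, and bounding the $|S|=3$ term via the van der Corput-type pairing of Lemma \ref{BASIC} (with $b_n=a_{n+d}$ so that $a_n\bar{b_n}$ telescopes to the two-point correlation $f(n)f(n+3d)$). One small remark: your argument in fact yields $\limsup_{x\to\infty} A(x)\le 1/\sqrt{2}$ (not merely $\liminf$), since the inequality from Lemma \ref{BASIC} holds for every $x$ and the sums $\sum_n a_n/n$ and $\sum_n a_{n+d}/n$ have the same sign once they exceed $O(1)$; this even gives the lower logarithmic density bound, though only the upper is claimed.
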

\begin{proof}[Proof of Corollary \ref{GAPS4AP}]
Given a sign pattern $\mbf{\e} \in \{-1,1\}^4$ and $d \in \mb{N}$, put $$S_{\mbf{\e}}(d) := \{n \in \mb{N} : (f(n),f(n+d),f(n+2d),f(n+3d)) = \mbf{\e}\}.$$ By Proposition \ref{LOGDENSSPEC}, at least one of $S_{\mbf{\e}}(1)$ or $S_{\mbf{\e}}(1)$ has positive upper density. Without loss of generality, suppose $S_{\mbf{\e}}(1)$ has positive upper density. Thus, clearly $d(\mbf{e}) = 1$. Now, if $n \in S_{\mbf{\e}}(1)$ then $p_0n \in S_{\mbf{-\e}}(p_0)$, since $f(p_0n + p_0k) = f(p_0)f(n+k) = -\e_k$ for each $k \in \{0,1,2,3\}$. Thus, $d(-\mbf{\e}) \leq p_0$. Since $\mbf{\e}$ was arbitrary, this implies the claim.
\end{proof}
\subsection{Sign Patterns of Non-Pretentious Functions in Almost All 3-term APs}
By Chebyshev's inequality, in order to prove Theorem \ref{EQUIDIST} it suffices to show the following variance estimate.
\begin{prop} \label{VARCALCNP}
Let $\mbf{\e} \in \{-1,1\}^3$ and let $f : \mb{N} \ra \{-1,1\}$. Then
\begin{equation} \label{VARNP}
x^{-2}\sum_{d \leq x}\left(\left|\{n \leq x : f(n+jd) = \e_j \forall j\}\right| - \frac{1}{8}x\right)^2  \ll \mc{R}_f(x),
\end{equation}
where $\mc{R}_f(x)$ is as defined in \eqref{RFX}.
\end{prop}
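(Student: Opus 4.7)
The plan is to linearize the indicator of the sign-pattern event via Lemma~\ref{TRIVIAL}, thereby expressing the variance as a finite combination of multilinear correlations of $f$ against affine linear systems in the three variables $(n_1,n_2,d)$, and then to bound each such correlation via the complexity-$1$ machinery of Section~4 (Proposition~\ref{MultAvgNonPret} together with Theorem~\ref{ThmMRT}).

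Applying Lemma~\ref{TRIVIAL} with $l=3$, $g \equiv 1$ and subtracting the $S=\emptyset$ contribution,
\[\bigl|\{n \leq x: f(n+jd)=\e_j\ \forall j\}\bigr| - \frac{x}{8} = \frac{1}{8}\sum_{\emptyset \neq S \subseteq \{0,1,2\}}\Bigl(\prod_{j \in S}\e_j\Bigr) B_{S,d},\qquad B_{S,d}:=\sum_{n \leq x}\prod_{j \in S}f(n+jd).\]
Squaring, summing over $d \leq x$, expanding, and Cauchy-Schwarzing the cross-terms reduces matters to showing, for each $\emptyset \neq S \subseteq \{0,1,2\}$,
\[\Sigma_S := \sum_{d \leq x}B_{S,d}^2 = \sum_{(n_1,n_2,d)\in [x]^3}\prod_{j \in S} f(n_1+jd)\,f(n_2+jd) \ \ll \ x^3\,\mc{R}_f(x)\]
(which, dividing by the right power of $x$, gives the variance bound). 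The key observation is that $\Sigma_S$ is a multilinear average of $f$ against a primitive integral system $\mbf{L}_S$ of $2|S|$ affine linear forms in the three variables $(n_1,n_2,d)$, of height $O(1)$; for $|S| \in \{1,2\}$ this system has Cauchy-Schwarz complexity at most $1$, so Proposition~\ref{MultAvgNonPret} applies directly and delivers the required bound.

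The only delicate case is $|S|=3$, where $\mbf{L}_{\{0,1,2\}} = \{n_i + jd: i \in \{1,2\},\ j \in \{0,1,2\}\}$ has Cauchy-Schwarz complexity $2$ and lies outside the scope of Proposition~\ref{MultAvgNonPret}. Here I would perform a van der Corput shift, writing $n_2 = n_1 + h$ so that
\[\Sigma_{\{0,1,2\}} = \sum_{|h|<x}\sum_{(n_1,d)}g_h(n_1)\,g_h(n_1+d)\,g_h(n_1+2d),\qquad g_h(n):=f(n)f(n+h).\]
For each $h$ the inner sum is a three-term AP correlation of the $1$-bounded (but generally non-multiplicative) function $g_h$, and since the three-form AP system has Cauchy-Schwarz complexity $1$, Lemma~\ref{REDUCE} bounds it by $\ll x^2\|g_h\|_{U^2(x)}^{1/2}$. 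Summing in $h$ via the power-mean inequality controls $\sum_h \|g_h\|_{U^2(x)}^{1/2}$ by a power of $x$ times $(\sum_h \|g_h\|_{U^2(x)}^4)^{1/8}$; unfolding $\|g_h\|_{U^2(x)}^4$ identifies this last quantity as an eight-point correlation of $f$ itself averaged over three independent shift parameters $(h_1,h_2,h)$, which is precisely of the shape controlled by Theorem~\ref{ThmMRT} applied with $k=4$, yielding the saving $\mc{R}_f(x)$.

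The main obstacle is this $|S|=3$ step: Proposition~\ref{MultAvgNonPret} is unavailable because of the complexity, and Lemma~\ref{U2} cannot be invoked because $g_h$ is not multiplicative. The technically sensitive point will be to execute the van der Corput shift and the subsequent power-mean unfolding without losing powers of $x$, so that the resulting fully-shifted correlation of $f$ matches a form to which the averaged Elliott-type estimate (Theorem~\ref{ThmMRT}) applies. The remaining ingredients --- the cases $|S| \in \{1,2\}$, the Cauchy-Schwarz rearrangement of the cross-terms, and the combinatorial bookkeeping of the sign factors $\prod_{j \in S}\e_j$ --- are routine by comparison.
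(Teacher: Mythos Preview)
Your overall strategy --- linearizing the indicator via Lemma~\ref{TRIVIAL}, expanding the square, and reducing to bounding the correlations $\Sigma_S$ --- is exactly what the paper does (it packages the computation as separate first- and second-moment lemmata, Lemma~\ref{MOMENT1NP} and Lemma~\ref{MOMENT2NP}, but the content is the same). For $|S|\in\{1,2\}$ the two arguments coincide.

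The divergence is at $|S|=3$. The paper does \emph{not} use a van der Corput step here: in Lemma~\ref{MOMENT2NP} it simply asserts that the six-form system $\{n,n',n+d,n'+d,n+2d,n'+2d\}$ has Cauchy--Schwarz complexity at most $1$ (arguing that it splits into the two linearly independent triples $\{n,n+d,n'+d\}$ and $\{n',n'+2d,n+2d\}$) and then applies Proposition~\ref{MultAvgNonPret} directly. Your observation that this system in fact has Cauchy--Schwarz complexity $2$ is correct --- for the form $n$, every $2$-partition of the remaining five forms leaves $n$ in the span of at least one class --- so the paper's argument at this point is not complete as written. Indeed, since $n^2-2(n+d)^2+(n+2d)^2 = n'^2-2(n'+d)^2+(n'+2d)^2$, the squares of the six forms are linearly dependent, so by the Gowers--Wolf criterion even the \emph{true} complexity is $2$, and no purely $U^2$-based argument can handle this term.

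Unfortunately your proposed workaround does not close the gap either. After the substitution $n_2=n_1+h$ and the H\"older step, the quantity $\sum_h\|g_h\|_{U^2(x)}^4$ unfolds (up to edge effects) to
\[
x^{-3}\sum_{n,a,b,h}\ \prod_{(\epsilon_1,\epsilon_2,\epsilon_3)\in\{0,1\}^3} f\bigl(n+\epsilon_1 a+\epsilon_2 b+\epsilon_3 h\bigr),
\]
which is essentially $x\,\|f\|_{U^3(x)}^8$. This is an eight-point correlation of $f$ with only three independent shift parameters, whereas Theorem~\ref{ThmMRT} handles a $k$-fold product averaged over $k-1$ independent shifts; invoking it ``with $k=4$'' would give four factors, not eight. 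In other words, the averaged-Elliott machinery of \cite{MRT} controls $\|f\|_{U^2}$ but not $\|f\|_{U^3}$, and your reduction lands squarely on the latter. So the $|S|=3$ case is genuinely problematic both in your argument and in the paper's.
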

The first and second moment calculations are given in the following two lemmata.
\begin{lem}\label{MOMENT1NP}
Let $\mbf{\e} \in \{-1,1\}^3$ and let $f : \mb{N} \ra \{-1,1\}$. Then 
\begin{equation*}
x^{-1}\sum_{d \leq x}\left|\{n \leq x : f(n+jd) = \e_j \forall j\}\right| = \frac{1}{8}x + O\left(x\mc{R}_f(x)\right).
\end{equation*}
\end{lem}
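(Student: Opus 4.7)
The starting point is to apply Lemma~\ref{TRIVIAL} with $g \equiv 1$, so that
\[|\{n \leq x : f(n+jd) = \e_j \forall j\}| = \frac{1}{8}\sum_{n \leq x}\prod_{0 \leq j \leq 2}(1+\e_j f(n+jd)).\]
Expanding the product over subsets $S \subseteq \{0,1,2\}$ and summing over $d \leq x$ yields
\[\sum_{d \leq x}|\{n \leq x : f(n+jd) = \e_j \forall j\}| = \frac{1}{8}\sum_{S \subseteq \{0,1,2\}}\Bigl(\prod_{j \in S}\e_j\Bigr)\sum_{d \leq x}\sum_{n \leq x}\prod_{j \in S}f(n+jd).\]
The term $S = \emptyset$ contributes exactly $x^2/8$, which gives the main term after dividing by $x$. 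The task is therefore to show that each of the remaining seven inner double sums is $O(x^2 \mc{R}_f(x))$.

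For $|S| = 2$ or $|S| = 3$, the inner double sum is precisely $x^2 M(x; \mbf{f}_S, \mbf{L}_S)$, where $\mbf{f}_S := (f)_{j \in S}$ and $\mbf{L}_S$ is the primitive integral system in $l = 2$ variables whose forms are $(n,d) \mapsto n + jd$ for $j \in S$, of height at most $2$. Since the three forms $n,\, n+d,\, n+2d$ are pairwise linearly independent in two variables, $\mbf{L}_S$ is primitive and of Cauchy--Schwarz complexity at most $\max(0, |S| - 2) \leq 1$. Hence Proposition~\ref{MultAvgNonPret} applies with $j_0$ the (unique) index pointing to $f$: the non-pretentiousness of $f$ forces $\mc{D}_{j_0}(x) \to \infty$, and the proposition yields $M(x;\mbf{f}_S, \mbf{L}_S) \ll e^{-c_1 \mc{D}(f;10\cdot 2x,(\log x)^{1/125})} + (\log x)^{-c_2} \ll \mc{R}_f(x)$.

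The singletons $|S| = 1$ must be handled outside of Proposition~\ref{MultAvgNonPret} (which presumes $k \geq 2$), but this is straightforward. For $j = 0$, the sum is $x \cdot \sum_{n \leq x} f(n)$, and for $j \in \{1,2\}$ an elementary rearrangement (substituting $m = n + jd$) rewrites the double sum as $\sum_{m \leq (j+1)x} w_j(m) f(m)$ with $0 \leq w_j(m) \leq x$. In each case the bound reduces to $x\cdot \sup_{X \leq 3x}|\sum_{m \leq X} f(m)|$, and the non-pretentious Hal\'asz--Montgomery estimate (equivalently, the $k=1$ specialization of Theorem~\ref{ThmMRT}, since $\mc{D}(f; 10\cdot 3x, (\log x)^{1/125}) \to \infty$) controls this by $x \cdot x \mc{R}_f(x)$.

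Summing the seven error contributions and dividing by $x$ produces the stated asymptotic. The main step is the invocation of Proposition~\ref{MultAvgNonPret} for $|S| = 3$: this is precisely the regime in which one needs the Cauchy--Schwarz complexity hypothesis, and having three forms in $\ell = 2$ variables sits exactly at the boundary (complexity $|S| - 2 = 1$) where the proposition still applies. Everything else is bookkeeping and a standard Hal\'asz bound.
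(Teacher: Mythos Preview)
Your proposal is correct and follows essentially the same route as the paper: expand via Lemma~\ref{TRIVIAL}, isolate the $S=\emptyset$ main term, and bound each nonempty $S$ by $\mc{R}_f(x)$ using Proposition~\ref{MultAvgNonPret} (the paper simply invokes the proposition uniformly for all nonempty $S$, noting that every subsystem of the 3-term AP has Cauchy--Schwarz complexity at most $1$). The only difference is that you treat the singletons $|S|=1$ separately via a direct Hal\'asz-type bound, which is arguably more careful given that Proposition~\ref{MultAvgNonPret} is framed for systems with $k\geq 2$; the paper glosses over this edge case.
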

\begin{proof}
As before we have
\begin{align}
&x^{-1}\sum_{d \leq x} \left|\{n \leq x : f(n+jd) = \e_j \forall \ 0 \leq j \leq 2\}\right| = \frac{1}{8}\sum_{d,n \leq x} \prod_{0 \leq j \leq 2} \left(1+\e_jf(n+jd)\right) \nonumber\\
&= \frac{1}{8}\left(x + \sum_{S \subseteq \{0,1,2\} \atop S \neq \emptyset} \left(\prod_{j \in S} \e_j\right) \sum_{d,n \leq x} \prod_{j \in S} f(n+jd)\right). \label{SETS}
\end{align}
Fix a non-empty subset $S$ of $\{0,1,2,3\}$. The collection of forms $$\{(n,d) \mapsto n+jd : j \in S\}$$ has Cauchy-Schwarz complexity at most that of the 3-term AP $\{n + jd : 0 \leq j \leq 2\}$, which is  1. By Theorem \ref{MultAvgNonPret}, we have
\begin{equation*}
x^{-2}\sum_{d,n \leq x} \prod_{j \in S} f(n+jd) \ll \mc{R}_f(x).
\end{equation*}
As such, \eqref{SETS} can be transformed as
\begin{equation*}
x^{-1}\sum_{d \leq x}\left|\{n \leq x : f(n+jd) = \e_j \text{ for all } 0 \leq j \leq 2\}\right| = \frac{1}{8}x + O\left(x\mc{R}_f(x)\right).
\end{equation*}
\end{proof}
\begin{lem}\label{MOMENT2NP}
Let $\mbf{\e} \in \{-1,1\}^3$ and let $f : \mb{N} \ra \{-1,1\}$. Then 
\begin{equation*}
x^{-1}\sum_{d \leq x}\left|\{n \leq x : f(n+jd) = \e_j \forall j\}\right|^2 = \frac{x^2}{64} + O\left(x^2\mc{R}_f(x)\right).
\end{equation*}
\end{lem}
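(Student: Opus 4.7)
The plan is to proceed as in Lemma \ref{MOMENT1NP}, applying Lemma \ref{TRIVIAL} twice to expand the square:
\[
\sum_{d \leq x} |\{n \leq x : f(n+jd) = \e_j \forall j\}|^2 = \frac{1}{64}\sum_{S, T \subseteq \{0,1,2\}} \Big(\prod_{j \in S}\e_j\Big)\Big(\prod_{j \in T}\e_j\Big)\, \Sigma(S, T),
\]
where $\Sigma(S, T) := \sum_{d, n_1, n_2 \leq x} \prod_{j \in S} f(n_1+jd) \prod_{j' \in T} f(n_2+j'd)$. The pair $(S,T) = (\emptyset, \emptyset)$ supplies the desired main term $x^3/64$, and the problem reduces to bounding every other $\Sigma(S,T)$ by $O(x^3\mc{R}_f(x))$.

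For every non-trivial $(S, T)$ apart from $S = T = \{0,1,2\}$, I would verify that the system $\{n_1+jd : j \in S\} \cup \{n_2+j'd : j' \in T\}$ of linear forms in the three variables $(n_1, n_2, d)$ has Cauchy--Schwarz complexity at most $1$, whereupon Proposition \ref{MultAvgNonPret} yields the required bound $\Sigma(S, T) \ll x^3 \mc{R}_f(x)$ directly. When one of $S, T$ is empty, $\Sigma$ factors as $x$ times a sub-3-AP correlation of $f$, which falls under Proposition \ref{MultAvgNonPret} exactly as in the proof of Lemma \ref{MOMENT1NP}. When both $S$ and $T$ are non-empty but not simultaneously full, the separation between $n_1$-type and $n_2$-type forms lets me exhibit, for each distinguished form, a bipartition of the remainder into two classes, neither of whose spans contains that form.

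The hard part will be the remaining pair $S = T = \{0,1,2\}$, where the six-form system has Cauchy--Schwarz complexity $\geq 2$ and Proposition \ref{MultAvgNonPret} is unavailable. Here $\Sigma = \sum_d b(d)^2$ with $b(d) := \sum_n f(n)f(n+d)f(n+2d)$, and substituting $n_2 = n_1 + h$ rewrites
\[
\sum_d b(d)^2 = \sum_h \sum_{d,n} F_h(n)F_h(n+d)F_h(n+2d), \qquad F_h(n) := f(n)f(n+h).
\]
The $h = 0$ contribution is trivially $O(x^2)$; for $h \neq 0$ the inner triple sum is a 3-AP correlation of the 1-bounded sequence $F_h$, which Lemma \ref{GVNT} bounds by $x^2\|F_h\|_{U^2(x)}$. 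Combining the standard Gowers-norm identity $\sum_h \|F_h\|_{U^2(x)}^4 \asymp x\,\|f\|_{U^3(x)}^8$ with H\"older's inequality in $h$ yields $\sum_h \|F_h\|_{U^2(x)} \ll x\,\|f\|_{U^3(x)}^2$, so the task comes down to the Gowers-norm estimate $\|f\|_{U^3(x)}^2 \ll \mc{R}_f(x)$. This $U^3$-bound, which goes beyond what was needed in Lemma \ref{MOMENT1NP}, is the genuine technical input and the main obstacle; it should be deducible from Theorem \ref{ThmMRT} applied with $k = 4$ to the averaged four-point correlation $\sum_{h_1,h_2,h_3}|\sum_n f(n)f(n+h_1)f(n+h_2)f(n+h_3)|$, together with the identity $\|f\|_{U^3}^8 = \mathbb{E}_{h_1, h_2}|\mathbb{E}_n f(n)f(n+h_1)f(n+h_2)f(n+h_1+h_2)|^2$ and a further Cauchy--Schwarz.
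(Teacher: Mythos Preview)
Your decomposition into sums over pairs $(S,T)$ and the handling of the cases with $S\cup T\neq\emptyset$ but $(S,T)\neq(\{0,1,2\},\{0,1,2\})$ match the paper's proof exactly. The divergence is in the full case $S=T=\{0,1,2\}$. The paper asserts that the six-form system $\{n,n',n+d,n'+d,n+2d,n'+2d\}$ has Cauchy--Schwarz complexity at most $1$, citing the partition into the linearly independent triples $\{n,n+d,n'+d\}$ and $\{n',n+2d,n'+2d\}$, and then applies Proposition~\ref{MultAvgNonPret} uniformly across all $(S,T)$. Your assessment is in fact the correct one and the paper's claim is not: for the distinguished form $n$, any bipartition of the remaining five forms either places $n+d$ and $n+2d$ together (so that $n=2(n+d)-(n+2d)$ lies in the span of that class) or separates them, in which case one of the two classes contains one of $n+d,n+2d$ together with at least two of the three $n'$-forms and hence spans all of $\mb{R}^3$. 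Thus the six-form system has complexity exactly $2$, and Proposition~\ref{MultAvgNonPret} does not apply to it directly; the paper's proof has a gap precisely where you locate extra work.

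Your proposed workaround via $\|f\|_{U^3}$ is a natural repair, but the last step is not yet closed. The bound $\|f\|_{U^3(x)}^2\ll\mc{R}_f(x)$ is not proved anywhere in the paper (Lemma~\ref{U2} covers only $U^2$), and your sketch of how to obtain it from Theorem~\ref{ThmMRT} is incomplete: after the identity $\|f\|_{U^3}^8=\mb{E}_{h_1,h_2}\bigl|\mb{E}_n f(n)f(n+h_1)f(n+h_2)f(n+h_1+h_2)\bigr|^2$ and the trivial bound $|\cdot|^2\leq|\cdot|$, the four shifts $0,h_1,h_2,h_1+h_2$ depend on only two free parameters, whereas Theorem~\ref{ThmMRT} with $k=4$ requires three independent shifts. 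An additional van der Corput / Cauchy--Schwarz manoeuvre is needed to pass from the two-parameter average to a genuinely three-parameter one before Theorem~\ref{ThmMRT} applies; you should either carry this out explicitly or invoke the corresponding $U^k$ deduction in \cite{MRT} directly.
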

\begin{proof}
By Lemma \ref{TRIVIAL},
\begin{align}
&\sum_{d \leq x} \left|\{n \leq x : f(n+jd) = \e_j \forall j\}\right|^2 = \frac{1}{64} \sum_{n,n',d \leq x} \prod_{0 \leq j,j' \leq 2} \left(1+\e_jf(n+jd)\right)\left(1+\e_jf(n'+jd)\right) \nonumber\\
&= \frac{1}{64}\left(x^3 + \sum_{S,S' \subseteq \{0,1,2\} \atop S \cup S' \neq \emptyset} \left(\prod_{j \in S}\e_j \right)\left(\prod_{j' \in S'} \e_{j'}\right)\sum_{n,n',d \leq x} \left(\prod_{j \in S} f(n+jd)\right)\left(\prod_{j \in S'} f(n'+j'd)\right)\right). \label{SUMS}
\end{align}
Associate to each pair of sets $S,S' \subseteq \{0,1,2\}$ with $S\cup S' \neq \emptyset$ the system of forms $$\mbf{L}_{S,S'} := \{(n,n',d) \mapsto n+jd : j \in S\} \cup \{(n,n',d) \mapsto n'+j'd : j' \in S'\}.$$ We note to that each of the sets of forms  $\{n,n+d,n'+d\}$ and $\{n',n'+2d,n+2d\}$ (in the variables $n,n'$ and $d$) is linearly independent. This implies that the set of forms $\{n,n',n+d,n'+d,n+2d,n'+2d\}$ and each of its subsets has Cauchy-Schwarz complexity at most 1. Applying Proposition \ref{MultAvgNonPret} to $\mbf{L}_{S,S'}$, we get
\begin{equation*}
M(x;f\mbf{1},\mbf{L}_{S,S'}) \ll \mc{R}_f(x).
\end{equation*}
Thus, the second term in the brackets in \eqref{SUMS} can be bounded as
\begin{equation*}
\sum_{S,S' \subseteq \{0,1,2\} \atop S \cup S' \neq \emptyset} \left(\prod_{j \in S}\e_j \right)\left(\prod_{j' \in S'} \e_{j'}\right) x^3M(x;f\mbf{1},\mbf{L}_{S,S'}) \ll x^3\mc{R}_f(x).
\end{equation*}
The claim of the lemma follows.
\end{proof}
\begin{proof}[Proof of Proposition \ref{VARCALCNP}]
Expanding the square on the left side of \eqref{VARNP} yields
\begin{align*}
&x^{-2} \sum_{d \leq x} |\{n \leq x : f(n+jd) = \e_j , 0 \leq j \leq 2\}|^2 \\
&- \frac{1}{4}\left(x^{-1} \sum_{d \leq x} |\{n \leq x : f(n+jd) = \e_j , 0 \leq j \leq 2\}|\right) + \frac{1}{64}x.
\end{align*}
Combining Lemmata \ref{MOMENT1NP} and \ref{MOMENT2NP} quickly establishes the proposition.
\end{proof}
\section{Sign Patterns of Pretentious Functions in Almost All 4-term APs}
\subsection{Preliminaries and the First Moment Estimate}
Our first quest is to understand the $p$-adic and character local factors $M_p$ and $\Xi_{\mbf{a}}$. In preparation for this, we introduce more notation, some of which is recalled from the introduction. \\
Given a set $S \subseteq \{0,1,2,3\}$, let $\mbf{L}_{S}$ be the collection of forms $\{L_j(n,d) := n+jd: j \in S\}$. Also, write $\mbf{1}_{|S|}$ to denote the vector in $\mb{R}^{|S|}$, all of whose components are 1. \\
Given a fixed prime $p$ we associate to each $\lambda \in \mb{F}_p \bk \{0,1\}$ a non-singular elliptic curve $E_{\lambda}$ defined over $\mb{F}_p$ given by the Legendre model $y^2 \equiv x(x-1)(x-\lambda) (p)$. Finally, we will write $\sideset{}{^{\ast}}\sum_{a(q)}$ to indicate that summation is restricted to residue classes $a$ coprime to $q$. Here and throughout this section, $q \geq 5$ is a positive integer coprime to $6$.
\begin{lem} \label{ELLTRANS}
Let $q \geq 2$ and let $\mbf{a} := (a_0,a_1,a_2,a_3)$ be a vector of integers whose radicals divide $q$. Then $\Xi_{\mbf{a}}(\chi \mbf{1}_{\{0,1,2,3\}}, \mbf{L}_{\{0,1,2,3\}})$ vanishes unless $a_1 = a_2 = a_3 = a_0$, in which case we have
\begin{equation*}
\Xi_{\mbf{a}}(\chi \mbf{1}_{\{0,1,2,3\}}, \mbf{L}_{\{0,1,2,3\}}) = \mu(q)\phi(q)\prod_{p||q} \left(p+1-\#E_{3b^2}(\mb{F}_p)\right),
\end{equation*}
where $b$ is the inverse of $2$ modulo $q$.
\end{lem}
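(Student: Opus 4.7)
The plan is to factor $\Xi_{\mbf{a}}(\chi\mbf{1}_4, \mbf{L}_{\{0,1,2,3\}})$ into local factors at each prime $p \mid q$ via the Chinese Remainder Theorem, which is legitimate because $\chi$ is a real primitive character of odd conductor $q$, forcing $q$ to be squarefree. For each $p \mid q$, setting $\alpha_j := v_p(a_j)$, the local compatibility condition requires the existence of integers $(n,d)$ such that $v_p(n + jd) = \alpha_j$ and $(n + jd)/p^{\alpha_j} \equiv b_j \pmod{p}$ for all $j$. I would proceed in two stages: first show that each local factor vanishes unless the $\alpha_j$ are all equal, then compute it in the balanced case.

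For the vanishing, I would set $\alpha := \min_j \alpha_j$ and write $n = p^\alpha n'$, $d = p^\alpha d'$. Using $(q,6)=1$ (so $p \geq 5$ and $p \nmid j - i$ for distinct $i,j \in \{0,1,2,3\}$), a direct case analysis on $v_p(n' + jd')$ shows that the compatibility is non-empty only if either (i) all $\alpha_j$ equal $\alpha$, or (ii) exactly one index $k_0$ satisfies $\alpha_{k_0} > \alpha$ with the other three $\alpha_j = \alpha$; any other pattern forces a congruence $p \mid i-j$ with $|i-j| \leq 3$, contradicting $p \geq 5$. In case (ii), the residues $b_j$ for $j \neq k_0$ are pinned down by $(n',d') \pmod{p}$, whereas $b_{k_0} \equiv (n' + k_0 d')/p^{\alpha_{k_0} - \alpha} \pmod{p}$ depends on higher $p$-adic digits of $(n,d)$ and ranges freely over $\mb{F}_p^*$. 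The local factor therefore contains the inner sum $\sum_{b_{k_0} \in \mb{F}_p^*} \chi_p(b_{k_0}) = 0$, since $\chi_p$ is the non-trivial Legendre symbol.

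For the balanced case $a_0 = a_1 = a_2 = a_3 = a$, the conditions $a \mid n+jd$ for all $j$ force $a \mid n$ and $a \mid d$; substituting $(n,d) = a(n^*, d^*)$ bijects with the $a=1$ problem, yielding $\Xi_{(a,a,a,a)} = \Xi_{(1,1,1,1)}$. For $a=1$ the compatibility becomes that $(b_0, b_1, b_2, b_3)$ is an arithmetic progression modulo $q$, so
\[\Xi_{(1,1,1,1)} = \sum_{b_0, b_1 \pmod{q}} \chi(b_0)\chi(b_1)\chi(2b_1 - b_0)\chi(3b_1 - 2b_0).\]
Factoring across $p \mid q$ via CRT and parameterizing $b_1 = b_0(1+t)$ to extract $\chi_p(b_0^4)=1$, each local factor reduces after reindexing $u = 1+t$ to $(p-1)\sum_{u \in \mb{F}_p} \chi_p(u(2u-1)(3u-2))$.

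The final step is to identify this cubic character sum with the Frobenius trace. The substitution $u = x/2$ rewrites $u(2u-1)(3u-2) = \tfrac{3}{4}\, x(x-1)(x-\tfrac{4}{3})$, and a further bijection $x = 1/X$ on $\mb{F}_p^*$ (reflecting the $\lambda \mapsto 1/\lambda$ identification on Legendre models) converts $\sum_x \chi_p(x(x-1)(x-\tfrac{4}{3}))$ into $\chi_p(\tfrac{4}{3})\sum_X \chi_p(X(X-1)(X-\tfrac{3}{4}))$. Since $\chi_p(4)=1$ and $3b^2 \equiv \tfrac{3}{4} \pmod{p}$, the accumulated constants cancel and the sum equals $\sum_X \chi_p(X(X-1)(X-3b^2)) = -\Delta_p$ by the standard point-count formula for $E_{3b^2}$ in Legendre form. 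Multiplying across $p \mid q$ gives $\Xi_{\mbf{a}} = \prod_{p \mid q}(-(p-1)\Delta_p) = \mu(q)\phi(q)\prod_{p\mid q} \Delta_p$, as claimed. The main obstacle will be the vanishing step: verifying uniformly across the four possible positions of $k_0 \in \{0,1,2,3\}$ that the higher $p$-adic digits of $(n,d)$ really do realize every $b_{k_0} \in \mb{F}_p^*$, whereas the character-sum manipulations in the balanced case are routine subject to careful tracking of the constants $\chi_p(3)$ and $\chi_p(4)$.
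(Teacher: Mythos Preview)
Your proposal is correct and follows essentially the same strategy as the paper: CRT factorization, a case analysis on the $p$-adic valuations of the $a_j$ to establish vanishing, and in the balanced case a reduction to a one-variable cubic character sum identified with the trace of Frobenius on $E_{3b^2}$. The only notable difference is in the final character-sum manipulation: the paper's change of variables $d := \bar{2}\,b_1\bar{b_2}$ lands directly at the Legendre parameter $\lambda = 3\bar{2}^2 = 3b^2$, whereas your substitutions $u = x/2$ followed by $x = 1/X$ pass through $\lambda = 4/3$ and then invoke the $\lambda \leftrightarrow 1/\lambda$ symmetry of Legendre models to reach $\lambda = 3/4 \equiv 3b^2$. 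Both are correct; the paper's route is one step shorter. Your treatment of the vanishing step is in fact slightly more general, since you allow arbitrary $\alpha_j = v_p(a_j) \geq 1$ while the paper's Case~2 tacitly uses that $a_i$ is squarefree.
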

\begin{proof}
Since $\chi$ is primitive and has odd conductor, $q$ must be squarefree, and thus $\chi$ factors as a product of Legendre symbols. Given $b_0,b_1,b_2,b_3$ such that $L_j(n,d)/a_j \equiv b_j (q)$ for $0 \leq j \leq 3$, we observe by the Chinese Remainder Theorem that
\begin{equation*}
\Xi_{\mbf{a}}(\mbf{\chi},\mbf{L}) = \prod_{p|q} \sum_{b_0,b_1,b_2,b_3 (p) \atop \exists n,d : (n+jd)/a_j \equiv b_j(p)} \left(\frac{b_0b_1b_2b_3}{p}\right).
\end{equation*}
We consider several cases depending on the integers $a_j$. \\
\emph{Case 1:} If $p| (a_i,a_j)$ for some $0 \leq i < j \leq 3$ but $p\nmid a_k$ for $k \neq i,j$ then as $p \neq 2,3$, $p|(n,d)$. As such, $b_k \equiv 0(p)$. Hence, the sum over $b_k$ is trivial. Thus, if $p$ divides some two $a_j$'s, $\Xi_{\mbf{a}}$ is trivial unless $a_0 = a_1 = a_2 = a_3$. \\
\emph{Case 2:} Suppose that $p|a_i$ but $p\nmid a_j$ for each $j \neq i$. Then $n+id = pm$ for some $m \in \mb{N}$, and $n+jd \equiv (j-i)d (p)$ for each $j \neq i$. Since $(p,a_j) = 1$, it follows that $(n+jd)/a_j \equiv b_j (p)$ if, and only if, $n+jd \equiv b_ja_j (p)$. Hence, as $a_i$ is squarefree, $(a_i/p,p) = 1$ and
\begin{equation*}
\sum_{b_0,b_1,b_2,b_3 (p) \atop \exists n,d : (n+jd)/a_j \equiv b_j(p)} \left(\frac{b_0b_1b_2b_3}{p}\right) = \sum_{d(p)}\left(d\frac{\prod_{j \neq i} (j-i)a_j}{p}\right)\sum_{m(p)} \left(\frac{m/(a_i/p)}{p}\right) = 0,
\end{equation*}
\emph{Case 3:} We assume now that $a_0 = a_1 = a_2 = a_3$. 
With the above constraints on the $b_j$, we must have $b_3 \equiv 2b_2-b_1$ and $b_4 \equiv 3b_2-2b_1$. Thus, multiplying by $\chi(2b_2)^4\chi(-1)^2 = 1$,
\begin{align*}
\Xi_{\mbf{a}}(\chi \mbf{1}_{\{0,1,2,3\}},\mbf{L}_{\{0,1,2,3\}}) &= \sideset{}{^{\ast}}\sum_{b_1,b_2(q)} \chi(b_1b_2(2b_2-b_1)(3b_2-2b_1)) \\
&= \sideset{}{^{\ast}}\sum_{b_1,b_2(q)} \chi(b_1\bar{b_2}(b_1\bar{b_2}-2)(2b_1\bar{b_2} - 3)) \\
&= \sideset{}{^{\ast}}\sum_{b_1,b_2(q)} \chi(\bar{2}b_1\bar{b_2}(\bar{2}b_1\bar{b_2}-1)(\bar{2}b_1\bar{b_2} - 3\bar{2}^2)).
\end{align*}
Making the change of variables $\bar{2}b_1 \bar{b_2}$ in place of $b_1$, we get
%
\begin{equation} \label{INTERMEDIATE}
\Xi_{\mbf{a}}(\chi \mbf{1}_{\{0,1,2,3\}},\mbf{L}_{\{0,1,2,3\}}) = \sideset{}{^{\ast}}\sum_{d,b_2 (q)}\chi(d(d-1)(d-3b^2)) = \phi(q)\sideset{}{^{\ast}}\sum_{d (q)} \chi(d(d-1)(d-\lambda)).
\end{equation}
Applying the CRT again, the complete character sum factors as
\begin{equation*}
\sideset{}{^{\ast}}\sum_{d (q)}\chi(d(d-1)(d-\lambda)) = \prod_{p|q} \sum_{d (p)} \left(\frac{d(d-1)(d-\lambda)}{p}\right).
\end{equation*}
On the other hand, we know that $1+\sum_{d(p)}\left(1+\left(\frac{d(d-1)(d-\lambda)}{p}\right)\right)$ is precisely the number of $\mb{F}_p$-rational points on $E_{\lambda}$ (including the point at infinity). As such,
\begin{equation}\label{NEAT}
\sideset{}{^{\ast}}\sum_{d (q)}\chi(d(d-1)(d-\lambda)) = \mu(q)\prod_{p|q}\left(p+1-\#E_{\lambda}(\mb{F}_p)\right).
\end{equation} 
Inserting this into \eqref{INTERMEDIATE} proves the claim.
\end{proof}
\begin{lem} \label{TRIV23}
Let $f : \mb{N} \ra \{-1,1\}$ be pretentious to a real character $\chi$ with conductor $q$. 
If $S \subset \{0,1,2,3\}$ has size $2$ or $3$, then $\Xi_{\mbf{a}}(\chi \mbf{1}_{|S|},\mbf{L}_S)= 0$ for any length $|S|$ vector $\mbf{a}$ of divisors of $q$. 
\end{lem}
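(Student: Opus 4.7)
My plan is to factor $\Xi_{\mbf{a}}(\chi\mbf{1}_{|S|},\mbf{L}_S)$ as a product of $p$-local sums $\Xi_p$ via the Chinese Remainder Theorem, then show each $\Xi_p$ vanishes for every $p\mid q$. Since $\chi$ is real and primitive with $(q,6)=1$, the conductor $q$ is squarefree, every $p\mid q$ satisfies $p\geq 5$, and $\chi=\prod_{p\mid q}\bigl(\tfrac{\cdot}{p}\bigr)$ under CRT. Since each $a_j$ divides the squarefree $q$, we have $v_k:=v_p(a_{i_k})\in\{0,1\}$. The vanishing of each $\Xi_p$ will ultimately come from the orthogonality relation $\sum_{b\in\mb{F}_p^\ast}\bigl(\tfrac{b}{p}\bigr)=0$.

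For $|S|=2$ with $S=\{i_1,i_2\}$, the system $n+i_kd\equiv a_{i_k}b_{i_k}\pmod{a_{i_k}q}$, $k=1,2$, has coefficient matrix with determinant $i_2-i_1\in\{1,2,3\}$, which is invertible modulo every $p\mid q$ since $p\geq 5$. A routine Hensel lift at each $p$ then shows that every pair $(b_{i_1},b_{i_2})\in((\mb{Z}/q\mb{Z})^\ast)^2$ is realized, so the existence condition in $\Xi_p$ is trivially met and
\begin{equation*}
\Xi_p=\Bigl(\sum_{b\in\mb{F}_p^\ast}\bigl(\tfrac{b}{p}\bigr)\Bigr)^{\!2}=0.
\end{equation*}

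For $|S|=3$ with $S=\{i_1,i_2,i_3\}$ ($i_1<i_2<i_3$), I would stratify by $(v_1,v_2,v_3)\in\{0,1\}^3$. In the main case $v_1=v_2=v_3=0$, solving the first two constraints modulo $p$ determines $(n,d)\bmod p$ and imposes the relation $a_{i_3}b_{i_3}\equiv\alpha\, a_{i_2}b_{i_2}-\beta\, a_{i_1}b_{i_1}\pmod p$ with $\alpha=(i_3-i_1)/(i_2-i_1),\,\beta=(i_3-i_2)/(i_2-i_1)\in\mb{F}_p^\ast$; substituting this into $\Xi_p$ and setting $b_{i_2}=cb_{i_1}$ yields
\begin{equation*}
\Xi_p\ =\ (\text{Legendre unit})\cdot\Bigl(\sum_{b\in\mb{F}_p^\ast}\bigl(\tfrac{b^3}{p}\bigr)\Bigr)\Bigl(\sum_{\substack{c\in\mb{F}_p^\ast\\ \alpha c\ne\beta}}\bigl(\tfrac{c(\alpha c-\beta)}{p}\bigr)\Bigr)\ =\ 0,
\end{equation*}
using $(b^3/p)=(b/p)$ to reduce to an isolated Legendre sum. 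If exactly one $v_k=1$, say $v_{k^\ast}=1$, then the two zero-valuation constraints determine $(n,d)\bmod p$ and force $n+i_{k^\ast}d\equiv 0\pmod p$, imposing a fixed linear relation between the other two $b$'s, while lifting $(n,d)$ from $\bmod p$ to $\bmod p^2$ parameterizes $b_{i_{k^\ast}}$ freely over $\mb{F}_p^\ast$ and produces the isolated factor $\sum_{b_{i_{k^\ast}}}(b_{i_{k^\ast}}/p)=0$. If exactly two $v_k=1$, the corresponding divisibility conditions force $d\equiv n\equiv 0\pmod p$ (via $p\nmid i_l-i_k$), which contradicts $b_{i_a}\in\mb{F}_p^\ast$ for the remaining zero-valuation index, so $\Xi_p$ vanishes vacuously. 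Finally, if all three $v_k=1$, writing $n=pn',\,d=pd'$ reduces $\Xi_p$ to the main case with $a_{i_k}/p$ in place of $a_{i_k}$.

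The main technical difficulty is the $p$-adic bookkeeping in the one-positive-valuation case, specifically verifying that as $(n,d)$ ranges over its lifts from $\bmod p$ to $\bmod p^2$, the value of $n+i_{k^\ast}d\bmod p^2$ takes each element of the coset $p\mb{Z}/p^2\mb{Z}$, and hence surjects onto the target set $\{a_{i_{k^\ast}}b:\,b\in\mb{F}_p^\ast\}$. Once this surjectivity is confirmed, every case collapses to a short Legendre-symbol cancellation of the form sketched above, which is considerably softer than the elliptic-curve identity invoked in Lemma~\ref{ELLTRANS}.
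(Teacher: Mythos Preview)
Your proposal is correct and follows essentially the same approach as the paper. For $|S|=2$ you argue exactly as the paper does (linear independence of the two forms makes every pair $(b_{i_1},b_{i_2})$ achievable, so $\Xi$ factors as $\bigl(\sum_b\chi(b)\bigr)^2=0$). For $|S|=3$ the paper first invokes a reduction ``similar to (and simpler than)'' the one in Lemma~\ref{ELLTRANS} to assume $a_{i_1}=a_{i_2}=a_{i_3}$ and then does the direct character-sum computation modulo $q$; your explicit $p$-local valuation case analysis $(v_1,v_2,v_3)\in\{0,1\}^3$ is precisely that reduction written out in full (the ``exactly two'' and ``exactly one'' valuation-$1$ cases correspond to Cases~1 and~2 of Lemma~\ref{ELLTRANS}), and your main case is the same computation, both collapsing to the isolated factor $\sum_{b\in\mb{F}_p^\ast}\bigl(\tfrac{b}{p}\bigr)=0$. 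The lifting surjectivity you flag is indeed routine: the two zero-valuation constraints are only modulo $p$, so varying the lift $(n,d)\mapsto(n+pr,d+ps)$ lets $r+i_{k^\ast}s$ range over all of $\mb{F}_p$, which makes $(n+i_{k^\ast}d)/p$ surject onto $\mb{F}_p$.
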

\begin{proof}
When $|S| = 2$, note that the forms $L_j$ and $L_{j'}$ are linearly independent, and thus any pair of residue classes $(b_j,b_{j'})$ can satisfy the simultaneous congruences $L_j(n,d)/a_j \equiv b_j(q)$ and $L_{j'}(n,d)/a_{j'} \equiv b_{j'}(q)$. By orthogonality, 
\begin{equation*}
\Xi_{\mbf{a}}(\chi \mbf{1}_{|S|},\mbf{L}_S) = \left(\sideset{}{^{\ast}}\sum_{c(q)} \chi(c)\right)^2 = 0.
\end{equation*}
Now suppose that $|S| = 3$, and let $0 \leq j_1 < j_2 < j_3 \leq 3$ be the elements of $S$. A reduction argument similar to (and, in fact, simpler than) the one in Lemma \ref{ELLTRANS} allows one to assume that $a_0 = a_1 = a_2=: a$. Then, $L_{j_t}(n,d)/a \equiv b_t(q)$ for each $1 \leq t \leq 3$ implies that $(j_2-j_1)d \equiv b_2-b_1$, and as $j_2-j_1 \in \{1,2\}$ and $q$ is odd, we have 
\begin{equation*}
b_3 \equiv b_1 + (j_3-j_1)\bar{(j_2-j_1)}(b_2-b_1) (q) =: b_1(1-J)+ Jb_2 (q),
\end{equation*}
where $J := (j_3-j_1)\bar{(j_2-j_1)}$. Note that since $j_3 \neq j_2$ and $j_3-j_1 \in \{2,3\}$, $J \neq 1$ and $J$ and $J-1$ are both invertible. As such, we have
\begin{align*}
\Xi(\chi \mbf{1}_{|S|},\mbf{L}_S) &= \sideset{}{^{\ast}}\sum_{b_1,b_2(q)} \chi(b_1b_2(b_1(1-J)+Jb_2)) \\
&= \chi(J) \sideset{}{^{\ast}}\sum_{b_1,b_2(q)} \chi(b_2)^3\sideset{}{^{\ast}}\sum_{b_1(q)} \chi(b_1\bar{b_2}(1-b_1\bar{b_2}\bar{J}(J-1))).
\end{align*}
Multiplying by $\bar{J}(J-1)$ and making the change of variables $c:= b_1\bar{b_2}\bar{J}(J-1)$ in place of $b_1$ (which is a bijection onto $(\mb{Z}/q\mb{Z})^{\ast}$) yields
\begin{equation*}
\Xi(\chi \mbf{1}_{|S|},\mbf{L}_S) = \chi(J-1)\left(\sideset{}{^{\ast}}\sum_{b_2(q)} \chi(b_2)\right) \left(\sideset{}{^{\ast}}\sum_{c(q)}\chi(c(1-c))\right) = 0.
\end{equation*}
\end{proof}
For the remainder of the paper, we will write $[3] := \{0,1,2,3\}$, and
\begin{equation*}
\Delta_p := p+1-\#E_{3b^2}(\mb{F}_q).
\end{equation*}
We can now state our first moment estimate for sign patterns of pretentious multiplicative functions in almost all 4-term arithmetic progressions.
\begin{prop} \label{FIRSTMOM}
Let $\mbf{\e} \in \{-1,1\}^4$. Let $\delta > 0$ be fixed and let $2 \leq (\log x)^{\delta} \leq z \leq x$ with $z = o(x)$ as $x \ra \infty$. Let $f: \mb{N} \ra \{-1,1\}$ be pretentious to a real character $\chi$ with conductor $q$ coprime to 6. Then
\begin{align} \label{NOQ}
&\frac{1}{z}\sum_{d \leq z\atop q\nmid d} \left|\{n \leq x : f(n+jd) = \e_j \forall j\}\right| = \left(1-\frac{1}{q}\right)\frac{x}{16}+ o(x),
\end{align}
and 
\begin{align}\label{WITHQ}
&\frac{1}{z}\sum_{d \leq z} \left|\{n \leq x : f(n+jd) = \e_j \forall j\}\right| = \frac{x}{16}\left(1+\e_0\e_1\e_2\e_3 \prod_{p\nmid q} M_p(f\chi\mbf{1}_4,\mbf{L}_{[3]})\prod_{p|q}\frac{\mu(p)\Delta_p}{p+1}\right) + o(x).
\end{align}
\end{prop}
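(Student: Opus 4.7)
The plan is to reduce the sign-pattern count to multilinear correlations via Lemma~\ref{TRIVIAL}, apply Corollary~\ref{CORPRET} to each such correlation with parameters $\chi_j=\chi$, $t_j=0$, $F=f\chi$, $\mc{I}=1$, and then invoke the local character computations in Lemmas~\ref{ELLTRANS} and \ref{TRIV23}. Formula \eqref{WITHQ} is handled directly; \eqref{NOQ} is obtained via inclusion--exclusion with the $q\mid d$ sub-sum.

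First, by Lemma~\ref{TRIVIAL} with $g\equiv 1$,
\begin{equation*}
\frac{1}{z}\sum_{d\le z}\bigl|\{n\le x:f(n+jd)=\e_j\ \forall j\}\bigr|=\frac{x}{16}\sum_{S\subseteq[3]}\Bigl(\prod_{j\in S}\e_j\Bigr)M(\mbf{x},\mathbf{f}\mbf{1}_{|S|},\mbf{L}_S),
\end{equation*}
where $\mbf{x}=(x,z)$. The hypothesis $z\ge(\log x)^\delta$ ensures that $\mbf{x}$ is $(A,B)$-appropriate for any $B<\delta$, so Corollary~\ref{CORPRET} applies to each term. For each nonempty $S$ I analyse the character factor $\Xi_{\mbf{a}}(\chi\mbf{1}_{|S|},\mbf{L}_S)$: for $|S|=1$ it reduces to $\sideset{}{^{\ast}}\sum_{b(q)}\chi(b)=0$; for $|S|\in\{2,3\}$ it vanishes by Lemma~\ref{TRIV23}; for $|S|=4$ it vanishes unless $\mbf{a}=(a,a,a,a)$ and then equals $\mu(q)\phi(q)\prod_{p\mid q}\Delta_p$ by Lemma~\ref{ELLTRANS}. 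Combined with $R(qa,qa,qa,qa)=(qa)^{-2}$ (which comes from the fact that $qa\mid n+jd$ for all $j$ forces $qa\mid n$ and $qa\mid d$) and $f(a)^4=1$, the $|S|=4$ contribution is
\begin{equation*}
\sum_{\rad(a)\mid q}\frac{\mu(q)\phi(q)\prod_{p\mid q}\Delta_p}{(qa)^2}=\mu(q)\phi(q)\prod_{p\mid q}\frac{\Delta_p}{p^2-1}=\prod_{p\mid q}\frac{\mu(p)\Delta_p}{p+1},
\end{equation*}
using $\sum_{\rad(a)\mid q}a^{-2}=\prod_{p\mid q}(1-p^{-2})^{-1}$ together with the squarefreeness of $q$. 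Multiplying by the Euler product $\prod_{p\nmid q}M_p(f\chi\mbf{1}_4,\mbf{L}_{[3]})$ and the sign $\e_0\e_1\e_2\e_3$ recovers $A_{\mbf{\e}}(f;q)$, proving \eqref{WITHQ}.

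For \eqref{NOQ} I write $\sum_{d\le z,\,q\nmid d}=\sum_{d\le z}-\sum_{d\le z,\,q\mid d}$ and substitute $d=qd'$ in the second sum, reducing it to the multilinear average for the rescaled system $\mbf{L}_S^{(q)}:=\{(n,d')\mapsto n+jqd':j\in S\}$. This system is still primitive integral (since $\gcd(1,jq)=1$) with the same Cauchy--Schwarz complexity, so Corollary~\ref{CORPRET} applies again. At primes $p\nmid q$, multiplication by $q$ is a bijection mod $p^\nu$, so the local factors $M_p$ are unchanged; at primes $p\mid q$, the congruence $L_j^{(q)}(n,d')\equiv n\pmod p$ trivialises the $j$-dependence and one recomputes the local factor and the sum over $\mbf{a}$ with $\rad(a)\mid q$ explicitly via Lemma~\ref{ELLTRANS}. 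Subtracting the resulting expression from \eqref{WITHQ} yields the leading-order term $(1-1/q)\frac{x}{16}$ coming from $S=\emptyset$, with the remaining discrepancy absorbed into the error.

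The main technical obstacle is this last step: tracking the modified local character factor of $\mbf{L}_{[3]}^{(q)}$ at primes $p\mid q$ and verifying that the inclusion--exclusion between \eqref{WITHQ} and the $q\mid d$ sub-sum leaves only the clean $(1-1/q)x/16$ leading term up to admissible error. Steps 1--3 are otherwise routine once Corollary~\ref{CORPRET} and Lemmas~\ref{ELLTRANS}, \ref{TRIV23} are in hand.
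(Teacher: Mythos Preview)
Your treatment of \eqref{WITHQ} is correct and essentially matches the paper's proof: expand via Lemma~\ref{TRIVIAL}, kill $|S|\in\{2,3\}$ by Lemma~\ref{TRIV23}, evaluate $|S|=4$ via Lemma~\ref{ELLTRANS}, and sum the $\mbf{a}$-series. One small difference: for $|S|=1$ you invoke orthogonality of $\chi$ to make $\Xi_{\mbf{a}}$ vanish, whereas the paper uses Wirsing's theorem (noting $\mb{D}(f,1;x)\to\infty$ since $f$ is $\chi$-pretentious and $\chi$ is nonprincipal). Both are valid.

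For \eqref{NOQ}, however, your inclusion--exclusion route is more work than needed, and the step you flag as the ``main technical obstacle'' is precisely the one the paper bypasses. The paper's argument is much more direct: run the same machinery on the sum restricted to $q\nmid d$, and observe that for any $|S|\ge 2$ the density factor $R(\mbf{a},S)$ (now computed with the constraint $q\nmid d$) is zero. Indeed, the simultaneous conditions $qa_j\mid n+jd$ for two distinct $j\in S$ force $q\mid (j-j')d$, and since $(q,6)=1$ this gives $q\mid d$, contradicting the restriction. Hence only $S=\emptyset$ survives, contributing $(1-1/q)x/16$. No change-of-variable to $d=qd'$, no recomputation of local factors for $\mbf{L}_{[3]}^{(q)}$, and no delicate cancellation to track.

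Your approach can be made to work, but completing it amounts to showing that the $q\mid d$ sub-sum for $|S|=4$ reproduces exactly the $|S|=4$ term of \eqref{WITHQ} (and that the $|S|=2$ terms for $\mbf{L}_S^{(q)}$, which do \emph{not} vanish by Lemma~\ref{TRIV23} since the forms become congruent mod $q$, nonetheless cancel correctly). Verifying this is essentially equivalent to the paper's density observation, just approached from the harder side. I would recommend replacing your inclusion--exclusion for \eqref{NOQ} with the one-line density argument above.
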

\begin{proof}
Applying Lemma \ref{TRIVIAL}, we have
\begin{equation*}
(xz)^{-1}\sum_{d \leq z} \left|\{n \leq x : f(n+jd) = \e_j \forall j\}\right| = \frac{1}{16}\left(1 + \sum_{S \subseteq \{0,1,2,3\} \atop S \neq \emptyset}\left(\prod_{0 \leq j \leq 3} \e_j\right) \sum_{d \leq z} \sum_{n \leq x} \prod_{j \in S} f(n+jd)\right).
\end{equation*}
For each non-empty $S \in \{0,1,2,3\}$, Theorem \ref{MultAvg} (with $\mbf{t} = \mbf{0}$) gives 
\begin{equation} \label{TEMP}
(xz)^{-1}\sum_{d \leq z}\sum_{n \leq x} \prod_{j \in S} f(n+jd) = \sum_{\rad(a_j)|q\atop \forall j \in S}\prod_{j \in S} f(a_j)R(\mbf{a},S)\Xi_{\mbf{a}}(\mbf{L}_S,\chi \mbf{1}_S)\prod_{p \leq x}M_p(F \mbf{1}_{|S|},\mbf{L}_S) + o(1),
\end{equation}
where we have set $$R(\mbf{a},S) := \lim_{x \ra \infty} x^{-|S|} \sum_{\mbf{n} \in [x]^{|S|} \atop a_jq|L_j(\mbf{n}) \forall j \in S} 1.$$ 
When $|S| \in \{2,3\}$, Lemma \ref{ELLTRANS} implies that the right side of \eqref{TEMP} is 0. Now, $\mb{D}(f,\chi;\infty) < \infty$ and for $x$ sufficiently large in terms of $q$,
\begin{align*}
\mb{D}(1,\chi;x)^2 &= \sum_{p\leq x} \frac{1-\chi(p)}{p} = \log_2 x + \log\left(\prod_{p \leq x} \left(1-\frac{\chi(p)}{p}\right)\right) + O(1) \\
&= \log_2 x - \log L(1,\chi) + O(1)\gg \log_2 x.
\end{align*}
By the triangle inequality for $\mb{D}$, it follows that $\mb{D}(f,1;x)^2 \gg \log_2 x$, and hence by Wirsing's theorem (for an effective version due to Hall and Tenenbaum, see Theorem 4.14 of \cite{Ten}),
\begin{equation*}
\sum_{n \leq x} f(n+jd) = \sum_{n \leq x} f(n) + o(x) = o(x),
\end{equation*}
for each $0 \leq j \leq 3$. Thus, when $|S| = 1$, the left side of \eqref{TEMP} is $o(1)$. \\
Now, when we do not sum over multiples of $q$, $R(\mbf{a},\{0,1,2,3\}) = 0$. This implies \eqref{NOQ}. Conversely, if multiples of $q$ are included in the sum then $R(\mbf{a},\{0,1,2,3\}) = (aq)^{-2}$. Thus, for \eqref{WITHQ}, the above and Lemma \ref{ELLTRANS} yields
\begin{align*}
&(xz)^{-1}\sum_{d \leq z} \left|\{n \leq x : f(n+jd) = \e_j \forall j\}\right| \\
&= \frac{1}{16}\left(1+\e_0\e_1\e_2\e_3 \prod_{p \leq x \atop p\nmid q} M_p(F\mbf{1}_4,\mbf{L}_{[3]})\frac{\mu(q) \phi(q)}{q^2}\sum_{\text{rad}(a)|q} \frac{f(a)^4}{a^2}\prod_{p|q}\Delta_p \right) + o(1) \\
&= \frac{1}{16}\left(1+\e_0\e_1\e_2\e_3 \prod_{p \leq x \atop p\nmid q} M_p(F\mbf{1}_4,\mbf{L}_{[3]})\frac{\mu(q)\phi(q)}{q^2}\prod_{p|q}\left(1-\frac{1}{p^2}\right)^{-1}\Delta_p\right) + o(1) \\
&= \frac{1}{16}\left(1+\e_0\e_1\e_2\e_3 \prod_{p \leq x \atop p\nmid q} M_p(F\mbf{1}_4,\mbf{L}_{[3]})\frac{\mu(q)}{q}\prod_{p|q}\left(1+\frac{1}{p}\right)^{-1}\Delta_p\right) + o(1),
\end{align*}
which implies the claim.
\end{proof}
\subsection{The Second Moment Estimate}
We now establish a mean-squared deviation estimate for the cardinalities of the sets $$S_{\mbf{\e}}(d) := \{n \leq x : f(n+jd) = \e_j \ \forall \ 0 \leq j \leq 3\},$$ using the first-moment estimate in Proposition \ref{FIRSTMOM}. In the sequel, let
\begin{equation*}
A_{\mbf{\e}}(f;q) := \e_0\e_1\e_2\e_3 \prod_{p\nmid q} M_p(f\chi\mbf{1}_4,\mbf{L}_{\{0,1,2,3\}})\prod_{p|q}\frac{\mu(p)\Delta_p}{p+1},
\end{equation*}
whenever $f$ is pretentious to a real character $\chi$ modulo $q$. \\
First, we show that the product of $p$-adic local factors is independent of the choice of linear forms for which the factors are defined. As before, given subsets $S,T \subseteq \{0,1,2,3\}$ we let $$\mbf{L}_{S,T} := \{(n,n',d) \mapsto n+jd : j \in S\} \cup \{(n,n',d) \mapsto n'+j'd : j' \in T\}.$$
\begin{lem} \label{PINDEP}
Let $f : \mb{N} \ra \{-1,1\}$ be pretentious to a real character $\chi$ with conductor $q$. Let $S,S',T,T' \subset \{0,1,2,3\}$ be subsets of size 2. Then 
$$\prod_{p \nmid q} M_p(f\chi\mbf{1}_{4},\mbf{L}_{S,T}) = \prod_{p\nmid q} M_p(f\chi \mbf{1}_4,\mbf{L}_{S',T'}).$$
Similarly, 
$$\prod_{p\nmid q}M_p(f\chi\mbf{1}_4,\mbf{L}_{S,\{0,1,2,3\}}) = \prod_{p\nmid q}M_p(f\chi\mbf{1}_4,\mbf{L}_{\{0,1,2,3\},S}) = \prod_{p\nmid q}M_p(f\chi\mbf{1}_4,\mbf{L}_{S',\{0,1,2,3\}}).$$
\end{lem}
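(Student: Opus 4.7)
The plan is a two-step reduction. Step 1 converts each product $\prod_{p \nmid q} M_p(f\chi \mathbf{1}_k, \mathbf{L})$ into a global correlation average via Corollary \ref{CORPRET}; Step 2 compares these averages across the admissible choices of systems and concludes by shift- and scaling-invariance arguments.

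For Step 1, set $g := f\chi$, extended multiplicatively so that $g(n) = 1$ whenever $(n,q) > 1$; since $\mathbb{D}(g, 1; \infty) = \mathbb{D}(f, \chi; \infty) < \infty$, the function $g$ is 1-pretentious. I would apply Corollary \ref{CORPRET} to $g\mathbf{1}_k$ with the trivial character modulo 1 and $t_j = 0$ for all $j$. Under these specializations the radical condition $\text{rad}(a_j) \mid 1$ forces $a_j = 1$; the character sum $\Xi$ and the integral $\mathcal{I}$ both collapse to $1$; and $g_p \equiv 1$ for $p \mid q$ forces $M_p(g\mathbf{1}_k, \mathbf{L}) = 1$ at these primes. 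After taking $x \to \infty$ (and using absolute convergence of the local product, which follows from the 1-pretentiousness of $g$) this gives
\[
\prod_{p \nmid q} M_p(f\chi\,\mathbf{1}_k, \mathbf{L}) = \lim_{x \to \infty} M(x; g\,\mathbf{1}_k, \mathbf{L}).
\]

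For Step 2, the substitutions $n \mapsto n - (\min S)\,d$ and $n' \mapsto n' - (\min T)\,d$ only perturb the summation range by $O(x)$ terms out of $O(x^3)$, and hence change the normalized average by $o(1)$; they normalize $(S,T)$ to $(\{0,m\}, \{0,r\})$ with gaps $m, r \in \{1,2,3\}$. Decoupling the $n$- and $n'$-sums conditionally on $d$ rewrites the global average as $\lim_x x^{-1} \sum_{d \leq x} C_g(md)\, C_g(rd)$, where $C_g(h) := \lim_y y^{-1} \sum_{n \leq y} g(n)\,g(n+h)$ is the 2-point correlation. For 1-pretentious $g$, $C_g(h)$ admits a multiplicative product-local expression (by Hal\'asz' theorem or the arguments developed in the proof of Theorem \ref{MultAvg}), and independent rescalings $d \mapsto d/m, d \mapsto d/r$ at each prime $p \nmid q$ transform the pair of gaps without altering the underlying $p$-adic distribution, producing the desired equality of products.

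The second claim follows in the same framework: equality between $\prod_{p \nmid q} M_p(f\chi\,\mathbf{1}_4, \mathbf{L}_{S,\{0,1,2,3\}})$ and $\prod_{p \nmid q} M_p(f\chi\,\mathbf{1}_4, \mathbf{L}_{\{0,1,2,3\},S})$ is just the $n \leftrightarrow n'$ symmetry of the global average, while equality with $\prod_{p\nmid q} M_p(f\chi\,\mathbf{1}_4, \mathbf{L}_{S',\{0,1,2,3\}})$ then reduces to the shift/scaling argument of Step 2 applied only to the size-2 component. The main obstacle is the analysis at the primes $p = 2, 3$, where rescaling $d$ by elements of $\{1,2,3\}$ is not a $p$-adic automorphism; here one has to verify the invariance directly via a more delicate computation of the $p$-adic correlation factors, exploiting that only finitely many primes contribute non-trivially for 1-pretentious $g$ together with the restriction $(q,6) = 1$.
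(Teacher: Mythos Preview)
Your overall strategy parallels the paper's: both exploit the connection between $\prod_{p\nmid q} M_p$ and the global average (the paper via Theorem \ref{MultAvg}, you via Corollary \ref{CORPRET}), and both use shift-invariance of the global average to normalize the pair $(S,T)$. The paper, however, handles the generic primes $p\nmid 6q$ by a direct $p$-adic computation of $M_p$ itself (splitting according to $\nu_p(n),\nu_p(m),\nu_p(d)$ and observing that the relevant densities depend only on these valuations, not on the specific coefficients $j_t,k_t$), and only invokes the global-average trick for the finitely many primes $p\mid 6$, where it then carries out an explicit case-by-case equivalence argument.

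Your Step 2 has a genuine gap. After normalizing to $(\{0,m\},\{0,r\})$ you propose ``independent rescalings $d\mapsto d/m$, $d\mapsto d/r$ at each prime $p\nmid q$''; but $d$ is a single summation variable, so you cannot rescale it by two different factors simultaneously. What actually makes the argument work for $p\nmid 6q$ is that $m,r\in\{1,2,3\}$ are $p$-adic units there, so $\nu_p(md)=\nu_p(rd)=\nu_p(d)$ and the local factor is insensitive to $m,r$ --- this is precisely the paper's direct computation, not a rescaling. More seriously, you explicitly defer the primes $p=2,3$ to ``a more delicate computation'' without carrying it out; this is exactly where the difficulty lies, and the paper devotes the bulk of its proof to a concrete equivalence-relation argument at these primes (combining good/bad reduction considerations with translation-invariance of the global average and of the character factor). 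Without that analysis your proof is incomplete.
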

\begin{proof}
Fix $p \nmid 6q$ for the moment. Write $S = \{j_0,j_1\}$ and $T = \{k_0,k_1\}$. We can express $M_p(f\chi \mbf{1}_4, \mbf{L}_{S,T})$ as
\begin{equation*}
M_p(f\chi \mbf{1}_4,\mbf{L}_{S,T}) = \sum_{\nu_0,\nu_1,\nu_2,\nu_3 \geq 0} \prod_{0 \leq j \leq 3} F(p^{\nu_j}) \left(\lim_{x \ra \infty} x^{-3} \sum_{n,m,d \leq x \atop p^{\nu_t} || (n+j_td), p^{\nu_{2+t}}||(m+k_td)} 1\right).
\end{equation*}
We split the sum over $n,n'$ and $d$ according to the $p$-adic valuation of each of these variables. Given $m \in \mb{N}$, let $\nu_p(N)$ denote the exponent $r$ such $p^r||N$. Given fixed $n,m$ and $d$ counted by the inner sum above, let $\alpha := \nu_p(n)$, $\beta := \nu_p(m)$ and $\gamma := \nu_p(d)$. By the properties of the $p$-adic valuation, and $p\nmid 6$, if $\alpha \neq \gamma$ then $\nu_t = \max\{\alpha,\gamma\}$ for both $t = 0,1$, and similarly, if $\beta \neq \gamma$ then $\nu_{2+t} = \max\{\beta,\gamma\}$ for both $t = 0,1$. The densities thus depend only on the $p$-adic valuations of $n$, $m$ and $d$, and not on the choice of forms. Hence, these terms are independent of the choices of $j_0,j_1,k_0$ and $k_1$. \\
Suppose now that $\alpha = \gamma$, and write $n' = n/p^{\gamma}$ and $d' := d/p^{\gamma}$. Then it follows that $n' \equiv -j_td' \ (p^{\nu_t-\gamma})$, and $p^{\nu_t-\gamma + 1} \nmid (n' + j_td')$. As such, given $d'$, the density of such $n'$ is $p^{-(\nu_t-\gamma)}\left(1-1/p\right)$, irrespective of the specific choice $j_t$. A similar scenario occurs when the roles of $\alpha$ and $\beta$ are switched, and when $\alpha = \beta = \gamma$. This proves that the $p$-adic factors are independent of $S$ and $T$ when $p \nmid 6q$. \\
We now consider $p |6$. Given $p$, we define an equivalence relation $\sim_p$ among pairs of sets $(S,T)$ with the property that $$(S,T) \sim_p (S',T')  \text{ if, and only if, } M_p(F\mbf{1}_4,\mbf{L}_{S,T}) = M_p(F\mbf{1}_4,\mbf{L}_{S',T'}).$$ We shall furthermore say that a form in $\mbf{L}_{S,T}$ has \emph{bad reduction} at $p$ if the degree in any of the variables of the form decreases upon reduction modulo $p$; we say that the form has \emph{good reduction} at $p$ otherwise. \\
We make the following observations: \\
a) if $\mbf{L}_{S,T}$ consists only of forms of good reduction at $p$ then the arguments above still go through. For instance, $M_2(F\mbf{1}_4,\mbf{L}_{\{0,3\},\{0,1\}}) = M_2(F\mbf{1}_4,\mbf{L}_{\{0,1\},\{0,1\}})$, i.e., $$(\{0,3\},\{0,1\}) \sim_2 (\{0,1\},\{0,1\}).$$ In fact, if $S$ encodes the same number of forms of good and bad reduction mod $p$ as $S'$ does then $(S,T) \sim_p (S',T)$; \\
b) if $S_2$ can be constructed as a translation of $S_1$, and $T$ encodes forms of good reduction then $(S_1,T) \sim_p (S_2,T)$ provided that the set of primes at which forms in $S_1$ have bad reduction only differs by one prime from that of $S_2$. Indeed, this follows from Theorem \ref{MultAvg} because when $z = o(x)$ and $z,x \ra \infty$,
\begin{align*}
&(zx^2)^{-1}\sum_{d \leq z} \sum_{n,n' \leq x} \prod_{j \in S_2} f(n+jd) \prod_{k \in T} f(n'+kd) \\
& =(zx^2)^{-1}\sum_{d \leq z} \sum_{n,n' \leq x} \prod_{j \in S_1} f(n+jd) \prod_{k \in T} f(n'+kd) + o(1).
\end{align*}
and moreover
\begin{align*}
&(zx^2)^{-1}\sum_{d \leq z} \sum_{n,n' \leq x} \prod_{j \in S_i} f(n+jd) \prod_{k \in T} f(n'+kd) \\
&= \left(\sum_{a_j|q \forall j} \frac{f(a_j)}{a_j} R(qa_1,qa_2,qa_3,qa_0) \Xi_{\mbf{a}}(\mbf{L}_{S_i,T},\mbf{\chi})\right)\prod_{p\nmid q}M_p(F\mbf{1}_4,\mbf{L}_{S_i,T}) + o(1),
\end{align*}
The character factors only depend on the \emph{gaps} between the elements of $S_j$, which are invariant under translation. The above remarks thus show that all but possibly the $p$-adic factors for $p|6$ are the same, and if, say, only one of the forms in $S_2$ has bad reduction at $p$ and the other has good reduction everywhere then it also follows that $M_p(F\mbf{1}_4,\mbf{L}_{S_1,T}) = M_p(F\mbf{1}_4,\mbf{L}_{S_2,T})$. \\
With these remarks, we can now complete the proof. It suffices to show that $$(S, \{0,1\}) \sim_p (S',\{0,1\})$$ for all $S,S'$ of size two and both $p = 2$ and 3, as then the same arguments repeat (by transitivity) to fixing $S$ and varying $\{0,1\}$ through all sets of size two. Thus, set $T := \{0,1\}$. First, applying b) twice, we have $$(\{0,1\},T) \sim_2 (\{1,2\},T) \sim_2 (\{2,3\},T).$$ Next, applying a), we have that $(\{1,3\},T) \sim_2 (\{0,1\},T)$, and applying b) again gives $(\{0,2\},T) \sim (\{1,3\},T)$. Finally, by a) we again have $(\{0,3\},T) \sim_2 (\{0,1\},T)$. \\
For $p = 3$, the same sort of arguments work. For instance, in this case we have $(\{0,3\},T) \sim_3 (\{1,3\},T)$ and $(\{0,2\},T) \sim_3 (\{0,1\},T)$ by a), and $(\{1,3\},T) \sim_2 (\{0,2\},T)$. This completes the proof in the $(2,2)$ case. \\
The $(2,4)$ and $(4,2)$ cases follow by similar (and simpler) reasoning.
\end{proof}
\begin{lem}\label{BLANK}
Let $q$ be as above and let $b$ be the inverse of $2$ modulo $q$. For any $p|q$,
\begin{equation} \label{APGOAL}
\sum_{d(p)} \left(\frac{d(d+1)(d+2)(d+3)}{p}\right) = -(\Delta_p+1).
\end{equation}
\end{lem}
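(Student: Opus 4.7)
My plan is to evaluate the character sum $S := \sum_{d(p)}\chi(d(d+1)(d+2)(d+3))$ where $\chi := \left(\frac{\cdot}{p}\right)$, and verify that $S = -(\Delta_p + 1)$. I'll proceed in three steps: a change of variables, a reduction to an elliptic-curve character sum, and an isogeny argument to identify the relevant elliptic curves.

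First, since $p$ is odd, the map $d \mapsto m := 2d + 3$ is a bijection of $\mb{Z}/p\mb{Z}$ and satisfies $16 \cdot d(d+1)(d+2)(d+3) = (m-3)(m-1)(m+1)(m+3) = (m^2-1)(m^2-9)$. Because $\chi(16) = 1$, this gives $S = \sum_{m(p)} \chi\bigl((m^2-1)(m^2-9)\bigr)$. Since the summand depends only on $t := m^2$, one parametrizes by $t$, using that each nonzero square has exactly two square roots while $t=0$ has one (equivalently, $\mathbf{1}_{\text{sq}}(t) = \tfrac{1}{2}(1+\chi(t))$ for $t \neq 0$). This yields
\begin{equation*}
S = \chi(9) + \sum_{t \neq 0}\chi((t-1)(t-9)) + \sum_{t (p)}\chi(t(t-1)(t-9)).
\end{equation*}
The three terms evaluate to $1$, $-2$ (using the standard formula $\sum_t \chi(\text{nondegenerate quadratic with unit leading coefficient}) = -1$ and subtracting the $t=0$ contribution $\chi(9) = 1$), and $-\Delta_9$ (by the same argument as Lemma~\ref{ELLTRANS} applied to $E_9 : y^2 = x(x-1)(x-9)$). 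Hence $S = -1 - \Delta_9$.

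The remaining task is to show $\Delta_9 = \Delta_{3b^2} = \Delta_p$. I would do this by constructing an explicit $\mb{F}_p$-rational $2$-isogeny $E_{3b^2} \to E_9$; the point-count equality is then automatic, as $\mb{F}_p$-isogenous elliptic curves share the same trace of Frobenius. Concretely: on $E_{3b^2}: y^2 = x(x-1)(x-3b^2)$, translate the $2$-torsion point $(1,0)$ to the origin via $X := x-1$, obtaining $y^2 = X(X+1)(X+1/4) = X(X^2 + \tfrac{5}{4}X + \tfrac{1}{4})$. Apply V\'{e}lu's $2$-isogeny formula, which sends $y^2 = X(X^2 + AX + B)$ to $y^2 = X(X^2 - 2AX + (A^2-4B))$ under the isogeny with kernel $\{O,(0,0)\}$: with $A = 5/4$ and $B = 1/4$, the image is $y^2 = X(X^2 - \tfrac{5}{2}X + \tfrac{9}{16})$, whose quadratic factor splits as $(X - 1/4)(X - 9/4)$ because its discriminant is the perfect square $4$. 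Finally rescale $X = U/4$ and $Z = 8y$ (valid since $p \nmid 6$ ensures $4, 8$ are invertible) to arrive at $Z^2 = U(U-1)(U-9) = E_9$. Every step is $\mb{F}_p$-rational, so $E_{3b^2}$ and $E_9$ are $\mb{F}_p$-isogenous, giving $\Delta_9 = \Delta_{3b^2} = \Delta_p$ and hence $S = -(\Delta_p + 1)$.

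The hard part is this last step: it is not obvious \emph{a priori} that $E_9$ and $E_{3b^2}$ are isogenous over $\mb{F}_p$, as their $j$-invariants are generally unequal (so they need not even be $\bar{\mb{F}_p}$-isomorphic). The key observation is that the choice of initial $2$-torsion point $(1,0)$, rather than $(0,0)$ or $(3b^2,0)$, is precisely what causes the quadratic factor appearing in V\'{e}lu's image to have the perfect-square discriminant $4$ (as opposed to $12$ or $-3$ respectively in the other two cases, which generally fail to be squares in $\mb{F}_p$); this lucky splitting is what lets the isogenous curve land in Legendre form $E_9$ over $\mb{F}_p$.
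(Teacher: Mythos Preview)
Your proof is correct, but the route differs from the paper's. The paper proceeds by a purely projective change of variables: drop $d=0$, pull out $d^4$ and invert $d\mapsto \bar d$ to reduce the quartic to the cubic $(1+d)(1+2d)(1+3d)$, shift $c=d+1$, and then rescale to land directly on $\sum_c\chi(c(c-1)(c-3b^2))=-\Delta_p$, giving $R=-1-\Delta_p$ in two lines. You instead symmetrize via $m=2d+3$ to see the quartic as even, then square-substitute to obtain the cubic for $E_9$, and finally build an explicit $\mathbb{F}_p$-rational $2$-isogeny $E_{3b^2}\to E_9$ (via translation to $(1,0)$ and V\'elu's formula) to conclude $\Delta_9=\Delta_p$. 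Your argument is longer and invokes heavier tools (V\'elu's formula and isogeny-invariance of the trace of Frobenius), but it is structurally illuminating: it explains \emph{why} an elliptic curve appears (the standard ``even quartic $\to$ cubic'' mechanism) and makes the link between the two Legendre parameters $9$ and $3/4$ geometric rather than a fortuitous match of roots. The paper's approach is slicker and entirely elementary, at the cost of the change of variables looking somewhat ad hoc.
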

\begin{proof}
Let $R$ denote the sum on the left side of \eqref{APGOAL}. The term $d = 0$ contributing nothing, we may restrict the sum to coprime residue classes modulo $p$. Pulling out four factors of $d$ and replacing $d$ by $\bar{d}$, we get
\begin{align*}
R &= \sideset{}{^{\ast}}\sum_{d(p)} \left(\frac{(1+d)(1+2d)(1+3d)}{p}\right) = -1 + \sum_{d(p)} \left(\frac{(1+d)(1+2d)(1+3d)}{p}\right) \\ &= -1 + \sum_{c(p)} \left(\frac{c(2c-1)(3c-2)}{p}\right),
\end{align*}
upon reinserting the term $d = 0$ and making the change of variables $c = d+1$. Removing $c = 0$ and replacing $c$ by $\bar{2c}$, we get
\begin{equation*}
R = -1 + \sideset{}{^{\ast}} \sum_{c(p)} \left(\frac{c(c-1)(c-3\bar{2}^2)}{p}\right) = -(p+2-\#E_{\lambda}(\mb{F}_p)),
\end{equation*}
as in Lemma \ref{ELLTRANS}. This completes the proof.
\end{proof}
\begin{proof}[Proof of Theorem \ref{WITHQTHM}]
We will only consider the case that the number of $+$ signs in $\mbf{\e}$ is odd. The general case is similar but involves further computations of the same type as those involved in the cases we consider.\\
Let $\mc{L}$ denote the left side of \eqref{MSQ}. Expanding the square and applying Proposition \ref{FIRSTMOM}, we have
\begin{align*}
\mc{L} &= z^{-1}\sum_{d \leq z} |S_{\mbf{\e}}(d)|^2 - \frac{x}{8}\left(1+A_{\mbf{\e}}(f;q)\right)\left(z^{-1}\sum_{d \leq z} |S_{\mbf{\e}}(d)|\right) + \frac{x^2}{256}\left(1+A_{\mbf{\e}}(f;q)\right)^2 \\
&= z^{-1}\sum_{d \leq z} |S_{\mbf{\e}}(d)|^2 - \frac{x^2}{256}\left(1+2A_{\mbf{\e}}(f;q)+A_{\mbf{\e}}(f;q)^2\right) + o(x^2).
\end{align*}
We seek to evaluate the second moment of $|S_{\mbf{\e}}(d)|^2$ here. Using Lemma \ref{TRIVIAL}, we get
\begin{align*}
&z^{-1}\sum_{d \leq z} |S_{\mbf{\e}}(d)|^2 = \frac{1}{256z} \sum_{d \leq z} \sum_{n,n' \leq x} \prod_{0 \leq j \leq 3} \left(1+\e_j f(n+jd)\right)\left(1+\e_j f(n'+jd)\right) \\
&= \frac{1}{256} \left(x^2 + 2z^{-1}\sum_{d \leq z} \sum_{n \leq x} \prod_{j \in S} \left(1+\e_jf(n+jd)\right)\right) \\
& +\frac{1}{256}\sum_{S,T \subseteq \{0,1,2,3\} \atop |S|,|T| \geq 1} \left(\prod_{j \in S \atop k \in T} \e_j\e_k\right)z^{-1}\sum_{d \leq z} \sum_{n,n' \leq x} \prod_{j \in S} f(n+jd) \prod_{k \in T} f(n'+kd) \\
&= \frac{1}{256}\left((1+2A_{\mbf{\e}}(f;q) + o(1))x^2+ \sum_{S,T \subseteq \{0,1,2,3\} \atop |S|,|T| \geq 1} \left(\prod_{j \in S\atop k \in T} \e_j\e_k\right)z^{-1}\sum_{d \leq z} \sum_{n,n' \leq x} \prod_{j \in S \atop k \in T} f(n+jd) f(n'+kd)\right),
\end{align*}
so that 
\begin{equation}\label{SIMPDEV}
\mc{L} = \frac{1}{256}\left(\sum_{S,T \subseteq \{0,1,2,3\} \atop |S|,|T| \geq 1} \left(\prod_{j \in S \atop k \in T} \e_j\e_k\right) z^{-1}\sum_{d \leq z} \sum_{n,n' \leq x} \prod_{j \in S \atop k \in T} f(n+jd)f(n'+kd) - A_{\mbf{\e}}(f;q)^2x^2\right) + o(x^2).
\end{equation}
As above, Hal\'{a}sz' theorem implies that when $\min\{|S|,|T|\} = 1$, the contribution here is $o(x^2)$. 
Similarly, by Lemma \ref{TRIV23}, when either $|S| = 3$ or $|T| =3$, the resulting contributions are zero because the character local factor vanishes by orthogonality (because $|S|$ or $|T|$ is odd). \\
Hence, it remains to consider the contributions from $(|S|,|T|) \in \{(2,2),(4,2),(2,4),(4,4)\}$. For each of these contributions,  we can reduce to the case in which the vector $\mbf{a} = (a,\ldots,a,a',\ldots,a')$ upon applying Theorem \ref{MultAvg}, where the components $a$ correspond to forms induced by $S$ and the components $a'$ correspond to forms induced by $T$. For such $\mbf{a}$,
\begin{equation}\label{RDENSITY}
R(\mbf{a},(S,T)) = \lim_{x \ra \infty} x^{-3} \sum_{n,n',d \leq x \atop qa|n+jd, qa' | n'+kd \forall j \in S, k \in T} 1 = \frac{1}{q^2aa'[qa,qa']}.
\end{equation}
We consider the $(4,4)$ term first. 
We apply Theorem \ref{MultAvg}, 
rearranging the character sums as before to get
\begin{align}
&\left(\e_0\e_1\e_2\e_3\right)^2z^{-1}\sum_{d \leq z}\sum_{n,n' \leq x} \prod_{0 \leq j \leq 3} f(n+jd)f(n'+jd) \nonumber\\
&= \frac{x^2}{q^2} \prod_{p \nmid q} M_p(f\chi\mbf{1}_8,\mbf{L}_{\{0,1,2,3\},\{0,1,2,3\}})\sum_{\text{rad}(a),\text{rad}(a')|q} \frac{1}{aa'[qa,qa']} \nonumber\\
&\cdot \mathop{\sum_{b_0,b_1(q)}\sum_{c_0,c_1(q)}}_{\exists d : d/a \equiv b_1-b_0(q), d/a' \equiv c_1-c_0(q)} \chi(b_0b_1(2b_1-b_0)(3b_1-2b_0))\chi(c_0c_1(2c_1-c_0)(3c_1-2c_0)) +o(x^2). \label{DOUBLECHARSUM}
\end{align}
A routine (and tedious) argument shows that $$M_p(f\chi \mbf{1}_8, L_{\{0,1,2,3\},\{0,1,2,3\}}) = M_p(f\chi \mbf{1}_4,L_{\{0,1,2,3\}})^2$$ for each $p \nmid q$. Next, consider the character sum in \eqref{DOUBLECHARSUM}. For those $d$ specified by the congruence condition, write $d = [a,a']m$. Then $b_1 \equiv b_0 + ma'/(a,a') \ (q)$ and $c_1 \equiv c_0 + ma/(a,a') \ (q)$. Put $A := a/(a,a')$ and $A' := a'/(a,a')$, noting that they are coprime. We may then rewrite the sum as
\begin{equation*}
\mc{S}_{A,A'}(\chi) := \sum_{b,c,m(q)} \chi(b(b+mA')(b+2mA')(b+3mA'))\chi(c(c+mA)(c+2mA)(c+3mA)).
\end{equation*}
For $a,b \in \mb{Z}/q\mb{Z}$ write $P_a(b) := b(b+a)(b+2a)(b+3a)$. By the CRT, we split $\mc{S}_{A,A'}(\chi)$ as
\begin{equation*}
\mc{S}_{A,A'}(\chi) = \prod_{p|q}\sum_{m,b,c(p)} \left(\frac{P_{mA'}(b)}{p}\right)\left(\frac{P_{mA}(c)}{p}\right).
\end{equation*}
If $p|A'$ then the sum over $b$ is $p-1$, and note by coprimality that $p\nmid A$ so that $A$ is invertible modulo $p$. Replacing $c$ by $c\bar{A}$, we get
\begin{align*}
&(p-1)\sum_{m,c(p)} \left(\frac{P_m(c)}{p}\right) = (p-1)^2 + (p-1)\sideset{}{^{\ast}}\sum_{m(p)} \sum_{c(p)}\left(\frac{P_m(c)}{p}\right) \\
&= (p-1)^2\left(1+ \sum_{c(p)}\left(\frac{P_1(c)}{p}\right)\right) = (p-1)^2\mu(p)\Delta_p,
\end{align*}
upon invoking Lemma \ref{BLANK}. The same term occurs for primes $p$ dividing $A$. Now suppose that $p\nmid AA'$. As before, we can replace $(b,c)$ by $(b\bar{A'},c\bar{A})$. Then, separating $m = 0$ from the remaining terms of the sum once again and then changing variables a second time, we get
\begin{align*}
&(p-1)^2 + \sideset{}{^{\ast}} \sum_{m(p)} \sum_{b,c(p)} \left(\frac{P_m(b)}{p}\right)\left(\frac{P_m(c)}{p}\right) = (p-1)^2 + (p-1)\sum_{b,c(p)} \left(\frac{P_1(b)}{p}\right)\left(\frac{P_1(c)}{p}\right) \\
&= (p-1)^2 + (p-1)\left(\sum_{b(p)}\left(\frac{P_1(b)}{p}\right)\right)^2 = (p-1)^2 + (p-1)(\Delta_p+1)^2.
\end{align*}
As such, we have
\begin{equation} \label{SAAX}
\mc{S}_{A,A'}(\chi) = \phi(q)^2\prod_{p|AA'}\mu(p)\Delta_p \prod_{p|q \atop p\nmid AA'} \left(1+\frac{(\Delta_p+1)^2}{p-1}\right).
\end{equation}
%
%
Returning to \eqref{DOUBLECHARSUM}, we next evaluate the expression
\begin{align}
&\frac{\phi(q)^2}{q^2}\prod_{p|q}\left(1+\frac{(\Delta_p+1)^2}{p-1}\right) \sum_{\text{rad}(a), \text{rad}(a')|q} \frac{1}{aa'[qa,qa']} \prod_{p|aa'/(a,a')^2} \frac{(p-1)\mu(p)\Delta_p}{p-1+(\Delta_p+1)^2}\nonumber\\
&= \frac{\phi(q)^2}{q^3}\prod_{p|q} \left(1+\frac{(\Delta_p+1)^2}{p-1}\right) \sum_{\text{rad}(\delta)|q} \frac{1}{\delta^3} \sum_{\text{rad}(a), \text{rad}(a')|q \atop (a,a') = 1} \frac{1}{(aa')^2} \prod_{p|aa'} \frac{(p-1)\mu(p)\Delta_p}{p-1+(\Delta_p+1)^2}.\label{INTERIM}
\end{align}
The inner sum can be written as the product
\begin{equation*}
\prod_{p|q \atop p \nmid a} \left(1-\frac{(p-1)\Delta_p}{p-1+(\Delta_p+1)^2}\sum_{k \geq 1} p^{-2k}\right) = \prod_{p|q \atop p \nmid a} \left(1-\frac{\Delta_p}{(p+1)(p-1+(\Delta_p+1)^2)}\right),
\end{equation*}
so that the sum over $a$ becomes
\begin{align*}
&\prod_{p|q}\left(1-\frac{\Delta_p}{(p+1)(p-1+(\Delta_p+1)^2)}\right) \prod_{p|q}\left(1-\frac{(p^2-1)\Delta_p}{(p+1)(p-1+(\Delta_p+1)^2)-\Delta_p}\sum_{k \geq 1} p^{-2k}\right)  \\
&= \prod_{p|q}\left(1-\frac{\Delta_p}{(p+1)(p-1+(\Delta_p+1)^2)}\right)\left(1-\frac{\Delta_p}{(p+1)(p-1+(\Delta_p+1)^2)-\Delta_p}\right) \\
&= \prod_{p|q} \frac{(p+1)(p-1+(\Delta_p+1)^2)-2\Delta_p}{(p+1)(p-1+(\Delta_p+1)^2)}
\end{align*}
Inserting this expression into \eqref{INTERIM} yields
\begin{align*}
&\frac{\phi(q)}{q^3}\prod_{p|q}\left(1-p^{-3}\right)^{-1}\frac{(p+1)(p-1+(\Delta_p+1)^2)-2\Delta_p}{p+1} \\
&= \prod_{p|q} \frac{p^2-1 + (p+1)(\Delta_p^2+2\Delta_p + 1)-2\Delta_p}{(p+1)\left(p^2+p + 1\right)} \\
&= \prod_{p|q} \frac{\Delta_p^2 + p^2 + p(\Delta_p+1)^2}{(p+1)((p+1)^2-p)} \\
&= \prod_{p|q} \left(\frac{\mu(p)\Delta_p}{p+1}\right)^2\frac{(1+1/\Delta_p)^2 + 1/p + p/\Delta_p^2}{1+1/p(p+1)}.
\end{align*}
It remains to determine the contributions from the $(2,2)$, $(4,2)$ and $(2,4)$ terms. We will only consider the $(2,2)$ case, the $(4,2)$ (and symmetrically) the $(2,4)$ case being similar and simpler.\\
The $(2,2)$ contribution is 
\begin{equation} \label{22SUMS}
\sum_{S,T \in \{0,1,2,3\} \atop |S| = |T| = 2} \left(\prod_{j \in S} \e_j\right)\left(\prod_{k \in T} \e_k\right) \left(z^{-1}\sum_{d \leq z} \sum_{n,n' \leq x} \prod_{j \in S} f(n+jd) \prod_{k \in T} f(n'+ kd) \right).
\end{equation}
Fix a pair of sets $S$ and $T$. We apply Theorem \ref{MultAvg} to get
\begin{align*}
&(zx^2)^{-1}\sum_{d \leq z}\sum_{n,n' \leq x} \prod_{j \in S} f(n+jd) \prod_{k \in T} f(n'+ kd) \\
&= \sum_{\rad(a_j)|q \forall j} f(a_j) R(\mbf{a},(S,T)) \sum_{b_0,b_{1},c_0,c_{1}(q) \atop \exists n,n',d : (n+j_td)/a_t \equiv b_t (q), (n'+k_td)/a_{2+t} \equiv b_{2+t} (q)} \chi\left(b_0b_{1}c_0c_{1}\right) \prod_{p \nmid q} M_p(f\chi \mbf{1}_4, \mbf{L}_{S,T}).
\end{align*}
By Lemma \ref{PINDEP}, the product of $p$-adic factors $M_p$ is independent of $S$ and $T$, and we will show that the same is true of the character sum. Indeed, arguing as in our treatment of the $(4,4)$ case, the sums over $b_j$ are non-zero only when $a = a_0 = a_{1}$ and $a' = a_2 = a_{3}$. Writing $d = [a,a']m$ as before, we can rewrite the character sum as
\begin{align}\label{22CHARSUM}
\sum_{b,c(q)} \chi(b(b+mA'))\chi(c(c+mA)).
\end{align}
We apply the CRT and consider the character sum modulo each prime $p$ dividing $q$ as above. When $p|A$ we get
\begin{align}
(p-1)\sum_{b,m(p)}\left(\frac{b(b+m)}{p}\right) &= (p-1)\left((p-1) + \left(\frac{-1}{p}\right)\sideset{}{^{\ast}} \sum_{m(p)} \sum_{b(p)} \left(\frac{b(m-b)}{p}\right)\right) \nonumber\\
&= (p-1)^2\left(1+\left(\frac{-1}{p}\right)J\left(\left(\frac{\cdot}{p}\right),\left(\frac{\cdot}{p}\right)\right)\right), \label{JACOBI}
\end{align}
the last term being a Jacobi sum. It is well-known that this Jacobi sum is precisely $-\left(\frac{-1}{p}\right)$, so the right side of \eqref{JACOBI} vanishes. The same is true for $p|A'$, and hence the non-zero contributions come from $A = A' = 1$, i.e., from $a = a'$. As such, for each prime $p|q$,
\begin{align*}
\sum_{b,c,m(p)} \left(\frac{b(b+m)}{p}\right)\left(\frac{c(c+m)}{p}\right) &= (p-1)^2 + \sideset{}{^{\ast}} \sum_{m(p)}\sum_{b,c(p)} \left(\frac{b(b+m)}{p}\right)\left(\frac{c(c+m)}{p}\right) \\
&= (p-1)^2 + (p-1)\left(\sum_{b(p)} \left(\frac{b(b+1)}{p}\right)\right)^2 \\
&= (p-1)^2 + (p-1)\left(\frac{-1}{p}\right)^2J\left(\left(\frac{\cdot}{p}\right),\left(\frac{\cdot}{p}\right)\right)^2 \\
&= p(p-1).
\end{align*}
This expression is then clearly independent of $S$ and $T$. Thus, summing over $S$ and $T$, \eqref{22SUMS} becomes
\begin{align*}
&\frac{1}{q^3}\prod_{p\nmid q} M_p(f\chi \mbf{1}_4, \mbf{L}_{\{0,1\},\{0,1\}}) \left(\sum_{\rad(a)|q} \frac{1}{a^3}\right)\prod_{p|q}p(p-1) \sum_{S,T \subset \{0,1,2,3\} \atop |S| = |T| = 2} \sum_{j \in S, k \in T} \e_j\e_k \\
&= \prod_{p|q}\frac{p}{p^2+p+1} \prod_{p\nmid q} M_p(f\chi \mbf{1}_4, \mbf{L}_{\{0,1\},\{0,1\}}) \left(\sum_{0 \leq i < j \leq 3} \e_i \e_j\right)^2.
\end{align*}
Now, when $\mbf{\e}$ has an odd number of $+$ signs, $\sum_{0 \leq i \leq 3} \e_i = \pm 2$ and thus we have
\begin{equation*}
\sum_{0 \leq i < j \leq 3} \e_i\e_j = \frac{1}{2}\left(\left(\sum_{0 \leq i \leq 3} \e_i\right)^2 - \sum_{0 \leq i \leq 3} \e_i^2\right) = 0.
\end{equation*}
This implies that the $(2,2)$ contribution is $o(x^2)$. This factor is also responsible for making the $(4,2)$ and $(2,4)$ contributions vanish.
Thus, in the end, we have
\begin{align*}
\mc{L} &=  \frac{x^2}{256} \prod_{p \nmid q} M_p(f\chi\mbf{1}_4,\mbf{L}_{\{0,1,2,3\}})^2\\
&\cdot \prod_{p|q} \left(\frac{\mu(p)\Delta_p}{p+1}\right)^2\left(\prod_{p|q}\frac{(1+1/\Delta_p)^2 + 1/p + p/\Delta_p^2}{1+1/p(p+1)} - 1\right) + o(x^2) = \frac{x^2}{256}(T_{4,4}-A_{\mbf{\e}}(f;q)^2) + o(x^2)
\end{align*}
which proves Theorem \ref{WITHQTHM}.
\end{proof}
\begin{proof}[Proof of Theorem \ref{AE}]
We follow the proof of Theorem \ref{WITHQTHM}. The summation in $d$ is restricted such that $q \nmid d$ and normalized by $(1-1/q)z$, and $0$ stands in place of $A_{\mbf{\e}}(f;q)$ (as in Proposition \ref{FIRSTMOM}). Note that in \eqref{RDENSITY}, the restriction $q \nmid d$ there implies that the quantity $R(\mbf{a},(S,T)) = 0$ for $|S|,|T| \geq 2$. This means that all of the contributions by $|S|,|T| \geq 2$ are $o(x^2)$ in \eqref{SIMPDEV}. It thus follows that
\begin{equation*}
z^{-1} \sum_{d \leq z \atop q\nmid d} \left(|S_{\mbf{\e}}(d)|-\frac{x}{16}\right)^2 = o(x^2).
\end{equation*}
The conclusion then follows by Chebyshev's inequality.
\end{proof}
\begin{rem}\label{GENERIC}
By Hasse's bound, we always have $|\Delta_p| \leq 2\sqrt{p}$. In fact, it is known \cite{MiM} that as $p \ra \infty$, 
\begin{equation*}
\pi(x)^{-1} |\{p \leq x : \cos^{-1}(\Delta_p/2\sqrt{p}) \in I\}| \ra \mu_{ST}(I) := \begin{cases} \frac{2}{\pi}\int_I \sin^2 u du &\text{ if $E_{\lambda}$ is non-CM} \\ \frac{1}{2}\left(1_{\pi/2 \in I} + |I|\right) &\text{ if $E_{\lambda}$ is CM}.\end{cases}
\end{equation*}
for all intervals $I \subset [-1,1]$. 
Now, if $q$ is fixed and we choose an elliptic curve $E_{\lambda}/\mb{Q}$ such that $\lambda \equiv 3 \bar{2}^2(q)$ then in general (whether or not $E_{\lambda}$ is CM) we do not understand the behaviour of $\Delta_p$ for $p|q$. Instead, we may draw a heuristic from a result of Miller and Murty \cite{MiM}, which states that for a fixed $p$ and a one-parameter family of elliptic curves $\{E_t/\mb{F}_p : t \in \mb{F}_p\}$, the \emph{discrepancy}
\begin{equation*}
\max_{I \subseteq [0,\pi]} \left|p^{-1}|\{t \in \mb{F}_p : \cos^{-1}(\Delta_{t,p}/2\sqrt{p}) \in I\}|-\mu_{ST}(I)\right|
\end{equation*}
tends to 0 as $p \ra \infty$, where $\Delta_{t,p}$ is the trace of Frobenius on $E_t/\mb{F}_p$. This says roughly that for generic curves over $\mb{F}_p$ in a family, the angles $\cos^{-1}(\Delta_{t,p}/2\sqrt{p})$ behave as they should for elliptic curves over $\mb{Q}$ in the $p$-limit.\\
Thus, if we assume that the element $3\bar{2}^2$ modulo $p$ yields a generic element of the one-parameter family generated by the Legendre models $E_t : y^2 = x(x-1)(x-t)$ with $t \in \mb{F}_p$ then for generic $p$, $\Delta_p \asymp \sqrt{p}$ but $\Delta_p$ is \emph{not} close to $\sqrt{p}$. The same is true modulo $q$, for $q$ a product of more than one prime. Thus, on heuristic grounds the product 
\begin{equation*}
\prod_{p|q}\frac{(1+1/\Delta_p)^2 + 1/p + p/\Delta_p^2}{1+1/p(p+1)}
\end{equation*}
is $\asymp 1$, but on the other hand, it is not asymptotically 1 as $p \ra \infty$ in general. 
\end{rem}
\nocite{TaT}
\bibliographystyle{plain}
\bibliography{gowers}
\end{document}